\documentclass[12pt,a4paper]{article}
\usepackage[T1]{fontenc}
\usepackage[utf8]{inputenc}
\usepackage{color}
\usepackage{colortbl}
\usepackage{mathtools}
\usepackage{enumitem}
\usepackage{amsmath}
\usepackage{amsthm}
\usepackage{amssymb}
\usepackage[pdfusetitle,
 bookmarks=true,bookmarksnumbered=false,bookmarksopen=false,
 breaklinks=false,pdfborder={0 0 1},backref=false,colorlinks=false]
 {hyperref}

\makeatletter

\providecommand{\tabularnewline}{\\}

\@ifundefined{date}{}{\date{}}
\usepackage{amsthm}\usepackage{tikz}
\usepackage{graphicx}
\usepackage{array}

\newtheorem{theorem}{Theorem}[section]
\newtheorem{lemma}[theorem]{Lemma}\newtheorem{proposition}[theorem]{Proposition}\newtheorem{corollary}[theorem]{Corollary}\theoremstyle{definition}
\newtheorem{definition}[theorem]{Definition}\newtheorem{example}[theorem]{Example}\theoremstyle{remark}
\newtheorem{remark}[theorem]{Remark}

\setlist[enumerate,1]{label={(\arabic*)}, align=left, left=0pt}

\makeatother

\theoremstyle{plain}

\providecommand{\theoremname}{Theorem}

\begin{document}
\title{Priestley Representation of Distributive Precontact Lattices}
% Use  for an abbreviated version of
% your contribution title if the original one is too long
\author{Sergio A. Celani\thanks{Departamento de Matem\'atica, Facultad de Ciencias Exactas, Universidad Nacional del Centro and CONICET, Pinto 399, CP 7000 Tandil, Argentina. Email: \texttt{scelani@exa.unicen.edu.ar}} \and Luciana Valenzuela\thanks{Same address. Email: \texttt{luvalenzuelaj@gmail.com}}}
% Use  for an abbreviated version of
% your contribution title if the original one is too long

%
% Use the package "url.sty" to avoid
% problems with special characters
% used in your e-mail or web address
%

\date{}
\maketitle

\vspace{1cm}
\begin{center}
\itshape
To Hilary Priestley, whose pioneering work in lattice theory and duality\\
continues to inspire generations of researchers.
\end{center}
\vspace{1cm}

\begin{abstract}
It is well known that, in Boolean algebras, the notions of precontact
relation, quasi-modal operator, and subordination relation are interdefinable.
In contrast, within the setting of distributive lattices, this equivalence
holds only between quasi-modal operators and subordination relations,
but not with precontact relations. In this paper, we study the class
of bounded distributive lattices equipped with a precontact relation,
referred to as precontact lattices. We also examine how conditions
imposed on the precontact relation correspond to first-order properties
in the Priestley dual. In addition, we characterize precontact substructures
and strong precontact sublattices of precontact lattices in terms
of a suitable lattice preorder. Finally, we introduce a notion of
precontact congruence and identify the corresponding closed sets in
the dual space. 
\end{abstract}

\section{Introduction}

The concept of Boolean contact algebras originates from attempts to
provide a region-based alternative to the classical point-based understanding
of space. Motivated by philosophical ideas and later developments
in topology and logic, Boolean contact algebras offer a formal framework
for reasoning about spatial regions and their relationships, especially
about the intuitive notion of ``being in contact''. A contact algebra
extends a Boolean algebra with an additional binary relation intended
to capture the idea that two non-empty regions ``touch'' each other.
This region-based approach avoids reference to points and instead
focuses on the mereological (part-whole) and topological interactions
between spatial entities. The formal study of contact algebras has
been significantly influenced by region-based topology, modal logic,
and qualitative spatial reasoning.

Precontact relations \cite{Dimov1,Dimov3} are a generalization of
the contact relations studied in \cite{Dimov2}, \cite{Duntsch2},
and \cite{Vakarelov3}, which are used to model the interaction of
regions in a geometric or topological space. As shown in \cite{Bezha1}
and \cite{Celani4}, precontact algebras are dual to Boolean subordination
algebras i.e., pairs $\langle B,\prec\rangle$ where $B$ is a Boolean
algebra and $\prec$ is a binary relation on $B$ called the subordination.
It is proved in \cite{Bezha1} that subordination Boolean algebras
and precontact algebras are equivalent formulations of quasi-modal
algebras introduced in \cite{Celani2} (see also \cite{Celani7} and
\cite{Celani4}). The subsequent development in \cite{Celani3} extends
this approach to bounded distributive lattices, thereby generalizing
the notion of modal operators beyond the Boolean setting.

Since the negation operation is absent from the axioms for a contact
relation, it is natural to consider bounded distributive lattices
equipped with either a precontact or a contact relation. To the best
of our knowledge, the earliest study of contact relations on distributive
lattices is due to Düntsch, MacCaull, Vakarelov, and Winter \cite{Duntsch1}.
Their work explores various topological representations of distributive
lattices with a contact relation, characterizing them as sublattices
of a Boolean algebra of regular closed sets of a topological space.
Further studies can be found in \cite{Ivanova} where Ivanova and
Vakarelov investigate topological representations of distributive
lattices endowed with contact, nontangential inclusion, and dual contact
relations, thereby enriching the expressive power of the language
of distributive contact lattices.

The aim of this paper is to further develop the study of bounded distributive
lattices equipped with a precontact relation. Specifically, we focus
on constructing a topological-relational representation and establishing
a duality based on Priestley spaces enriched with an additional binary
relation. This duality generalizes the well-known dualities for Boolean
algebras with a precontact relation, quasi-modal operators, or subordinations,
as developed in various works with different motivations (see \cite{Bezha1},
\cite{Celani2}, \cite{Dimov1}, \cite{Dimov3}, \cite{Duntsch3}
and \cite{Koppelberg}).

In Section {\ref{sec:Precontact-relations-on}} we present several
examples of precontact relations on distributive lattices. We show
that precontact relations are closely connected to a notion of negation,
much like subordinations are related to the concept of a modal operator.
The most common notion of morphism between Boolean contact algebras
is that of a Boolean homomorphism that reflects contact (see, for
example, \cite{Dimov2} or \cite{Gold}). However, when the precontact
relation is modally definable (see \cite{Bezha1}), this definition
does not coincide with that of a modal homomorphism. For this reason,
we will consider two notions of morphism between precontact relations.
The first is the usual notion of lattice homomorphism that reflects
the precontact relation. The second, which we call strong precontact
homomorphism, is designed to coincide---when the precontact relation
is modally definable---with the standard notion of homomorphism between
modal algebras. In fact, in the Boolean setting, when the precontact
relation is modally definable the strong precontact homomorphism coincide
with the usual notion modal homomorphism, as shown in Proposition
\ref{relacion con homo booleano}.

In Section~\ref{sec:Priestley-duality} we focus on the representation
and Priestley-type duality for precontact lattices. We introduce the
notion of precontact space, namely a Priestley space endowed with
a binary relation $R$ that suitably models the precontact relation.
In this setting, we also consider two types of morphisms: stable and
strongly stable morphisms. The notion of strongly stable morphism
was introduced in \cite{Celani2,Celani3} in the context of the representation
theory of quasi-modal algebras and quasi-modal lattices, as a generalization
of the well-known notion of $p$-morphism between Kripke frames. This
framework naturally gives rise to four categories: $\mathbf{PreDL}$
and $\mathbf{PreDLS}$ on the algebraic side, and $\mathbf{PS}$ and
$\mathbf{SPS}$ on the topological side. The categories $\mathbf{PreDL}$
and $\mathbf{PreDLS}$ have the same objects but differ in their morphisms,
and the same holds for $\mathbf{PS}$ and $\mathbf{SPS}$. Finally,
we establish a duality between $\mathbf{PreDL}$ and $\mathbf{PS}$,
and between $\mathbf{PreDLS}$ and $\mathbf{SPS}$.

Following the perspective of correspondence theory in modal logics
and in the theory of precontact algebras \cite{Dimov1,Dimov3}, it
is natural to investigate how certain algebraic properties of precontact
lattices are reflected in the relational structure of their dual spaces.
In Section \ref{sec:Correspondence-theory} we study several classes
of precontact lattices in terms of relational conditions on the associated
precontact spaces. This analysis generalizes known results in the
Boolean setting. In particular, we show that contact lattices correspond
to precontact spaces $\left<X,R\right>$ whose relation $R$ is reflexive
and symmetric. Moreover, we prove that the class of normal lattices
(see Definition \ref{normal and compingent lattice}) corresponds
to precontact spaces where $R$ is an equivalence relation. These
results generalize Theorem 10 from \cite{Duntsch2}, and Theorem 26
from \cite{Celani2}. We also introduce the notion of extensional
precontact lattices as a natural extension of extensional Boolean
algebras (see \cite{Bezha1} and \cite{Dimov2}). Finally, we define
the class of compingent contact lattices and provide a dual characterization
of these structures in terms of an irreducibility condition on the
relation $R$, thereby extending to the setting of precontact lattices
the relational framework developed for Boolean algebras in \cite{Bezha1}.

Section \ref{sec:Subestructures-and-subalgebras} is devoted to studying
certain extensions of the classical notion of subalgebra. In universal
algebra, a subset $B$ of an algebra $A$ (of a given similarity type)
is a subalgebra if and only if the inclusion map $i\colon B\hookrightarrow A$
is a homomorphism. Consequently, any generalization of the notion
of subalgebra is inherently linked to suitable notions of homomorphisms
between precontact lattices. Since in this paper we consider two notions
of morphism between precontact lattices, it is natural to explore
the two corresponding generalizations of the classical concept of
subalgebra. We prove that the notion of a precontact substructure
corresponds to homomorphisms that reflect the precontact relation,
while the notion of a strong precontact sublattice corresponds to
strong precontact homomorphisms. In addition, we characterize both
substructures and strong precontact sublattice in terms of a suitable
lattice preorder. It is worth noting that the notion of substructure
in contact algebras was studied in \cite{Duntsch4}. This notion coincides
with what we call a precontact subalgebra.

In Section \ref{sec:Congruences-of-precontact}, we introduce and
study the notion of precontact congruence -- that is, a congruence
on a distributive lattice that satisfies a compatibility condition
with the precontact relation. We show that this notion is well-behaved
in the sense that it allows for the definition of a precontact relation
on the quotient lattice in such a way that the canonical homomorphism
becomes a strong homomorphism between precontact lattices. Moreover,
when the precontact relation on a Boolean algebra is modally definable,
the notion of precontact congruence coincides with that of a modal
Boolean congruence and when the precontact relation, on a distributive
lattice, is negationally definable it coincides with the notion of
congruence of a lattice with negation.

\section{Preliminaries}

Let $\langle X,\leq\rangle$ be a poset. For each $Y\subseteq X$,
let $[Y)=\{x\in X:\exists y\in Y(y\leq x)\}$ and $(Y]=\{x\in X:\exists y\in Y(x\leq y)\}$.
We say that $Y$ is an upset of $X$ if $Y=[Y)$, and a downset of
$X$ if $Y=(Y]$. If $Y$ is the singleton $\{y\}$, we use the notation
$[y)$ and $(y]$ instead of $[\{y\})$ and $(\{y\}]$, respectively.
We denote by $\mathrm{Up}(X)$ the collection of all upsets of $\langle X,\leq\rangle$.
We also write $\mathcal{P}(X)$ for the power set of $X$. The complement
of a subset $Y\subseteq X$ is denoted by $X-Y$ or $Y^{c}$.

A totally order-disconnected topological space is a triple $\mathbb{X}=\langle X,\leq,\tau\rangle$
where $\langle X,\leq\rangle$ is a poset and $\langle X,\tau\rangle$
is a topological space, and for any $x,y\in X$ with $x\nleq y$,
there exists a clopen upset $U$ such that $x\in U$ and $y\notin U$.
A Priestley space is a compact totally order-disconnected topological
space (see \cite{Priestley1}, \cite{Priestley2}). For a Priestley
space $X$, let $\mathrm{D}(X)$ denote the set of all clopen upsets
of $X$, $\mathrm{OpUp}(X)$ the set of open upsets, and $\mathrm{ClUp}(X)$
the set of closed upsets. If $X$ is a Priestley space, for each $x\in X$
we denote by $\max R(x)=\left\{ y\in R(x):\text{ there is no }z\in R(x)\text{ such that }y<z\right\} $,
where $<$ denotes the strict order induced by the Priestley order
on $X$.

Given a bounded distributive lattice $L=\langle L,\vee,\wedge,0,1\rangle$,
let $\mathrm{X}(L)$ be the set of all prime filters of $L$. Denote
by $\mathrm{Id}(L)$ the collection of all ideals of $L$ and by $\mathrm{Fi}(L)$
the collection of all filters of $L$. The function $\beta:L\rightarrow\mathrm{Up}(\mathrm{X}(L))$
defined by $\beta(a)=\{P\in\mathrm{X}(L):a\in P\}$ is an embedding
of bounded distributive lattices, so that $L\cong\beta\textcolor{blue}{[L]}$.
The structure $\langle\mathrm{X}(L),\subseteq,\tau_{{\rm {X}(L)}}\rangle$
forms a Priestley space under the inclusion order and a topology with
a subbasis of the form $\beta[L]\cup\{\mathrm{X}(L)-\beta(a):a\in L\}$,
giving $L\cong\mathrm{D}(\mathrm{X}(L))=\beta\textcolor{blue}{[L]}$.

The map $\varphi:\mathrm{Id}(L)\rightarrow\mathrm{OpUp}(\mathrm{X}(L))$
given by $\varphi(I)=\{P\in\mathrm{X}(L):P\cap I\neq\emptyset\}$
is a lattice isomorphism between the poset $(\mathrm{Id}(L),\subseteq)$
of ideals of $L$ and the poset $(\mathrm{OpUp}(\mathrm{X}(L)),\subseteq)$
of open upsets of $\mathrm{X}(L)$. If $U$ is an open upset of $\mathrm{X}(L)$,
then $I(U)=\{a\in L:\beta(a)\subseteq U\}$ is the ideal of $L$ corresponding
to $U$. Moreover, for $I$, $J\in\mathrm{Id}(L)$, $I\subseteq J$
iff $\varphi(I)\subseteq\varphi(J)$, $I=I(\varphi(I))$, and $U(I(U))=U$,
for an open upset $U$ of $\mathrm{X}(L)$. Also, the map $\psi:\mathrm{Fi}(L)\rightarrow\mathrm{ClUp}(\mathrm{X}(L))$
defined by $\psi(F)=\{P\in\mathrm{X}(L):F\subseteq P\}$ is a lattice
anti-isomorphism. If $C$ is a closed upset of $\mathrm{X}(L)$, then
$F(C)=\{a\in L:C\subseteq\beta(a)\}$ is the filter of $L$ corresponding
to $C$. For $F,H\in\mathrm{Fi}(L)$, $F\subseteq H$ iff $\psi(H)\subseteq\psi(F)$,
$F=F(\psi(F))$, and $C=C(F(C))$, for a closed upset of $\mathrm{X}(L)$.
It is well known that $\varphi(I)=\bigcup\{\beta(a):a\in I\}$ for
each $I\in\mathrm{Id}(L)$, and $\psi(F)=\bigcap\{\beta(a):a\in F\}$
for each $F\in\mathrm{Fi}(L)$.

For a bounded distributive lattice $L$, the filter (ideal) generated
by a subset $H\subseteq L$ is denoted $\mathrm{Fg}_{L}(H)$ ($\mathrm{Ig}_{L}(H)$).
If $H=\{a\}$, we write $\mathrm{Fg}(a)$ and $\mathrm{Ig}(a)$ instead
of $\mathrm{Fg}(\{a\})$ and $\mathrm{Ig}(\{a\})$, respectively.
A subset $M\subseteq L$ is a sublattice of $L$ if $a\wedge b,\,a\vee b\in M$
for all $a,b\in M$. It is a $(0,1)$-sublattice if $0_{L},1_{L}\in M$. 

A $\Box$-modal lattice is a bounded distributive lattice $L$ expanded
with a unary operation $\Box$, the so-called necessity operator,
satisfying $\Box1=1$ and preserving finite meets, that is, $\Box(a\wedge b)=\Box a\wedge\Box b$
for all $a,b\in L$ (see, for example, \cite{Celani8}). Dually, a
unary operation $\Diamond$ satisfying $\Diamond0=0$ and preserving
finite joins, i.e., $\Diamond(a\vee b)=\Diamond a\vee\Diamond b$
for all $a,b\in L$, is called a possibility operator, and bounded
distributive lattices expanded with such an operation are called $\Diamond$-modal
lattices (see, for example, \cite{CigLafPet,Ge}).

Let $R$ be a binary relation on a set $X$. As usual, for $x\in X$,
let $R(x)=\{y\in X:(x,y)\in R\}$ and $R^{-1}(x)=\{y\in X:(y,x)\in R\}$.
Also, for $V\subseteq X$, let $R(V)=\bigcup\limits_{x\in V}R(x)$
and $R^{-1}(V)=\bigcup\limits_{x\in V}R^{-1}(x)$.

\section{Precontact relations on bounded distributive lattices}

\label{sec:Precontact-relations-on}

In this section, we introduce the notion of a precontact bounded distributive
lattice, present several examples, and investigate its elementary
properties. Precontact algebras were primarily studied in \cite{Dimov1},
\cite{Dimov3}, and \cite{Duntsch2}. In the Boolean setting, a precontact
relation is equivalent to both the notion of subordination \cite{Bezha1}
and that of a quasi-modal operator \cite{Celani2}. However, in the
absence of an involutive negation, these notions no longer coincide.
We also introduce precontact homomorphisms and strong precontact homomorphisms,
extending the standard morphisms between precontact Boolean algebras
to the broader setting of bounded distributive lattices. These homomorphisms
capture different ways of preserving the precontact relation and provide
a natural generalization of homomorphisms for modal algebras.

\begin{definition} A precontact relation on a bounded distributive
lattice $L$ is a binary relation $\mathsf{C}$ satisfying the following
conditions: 
\begin{enumerate}
\item[$\mathrm{(PC1)}$ ] If $(a,b)\in\mathsf{C}$, then $a,b\neq0$. 
\item[$\mathrm{(PC2)}$ ] $(a\vee b,x)\in\mathsf{C}$ if and only if $(a,x)\in\mathsf{C}$
or $(b,x)\in\mathsf{C}$. 
\item[$\mathrm{(PC3)}$ ] $(a,b\vee x)\in\mathsf{C}$ if and only if $(a,b)\in\mathsf{C}$
or $(a,x)\in\mathsf{C}.$ 
\end{enumerate}
A precontact lattice is a pair $\langle L,\mathsf{C}\rangle$, where
$L$ is a bounded distributive lattice and $\mathsf{C}$ is a precontact
relation on $L$. \end{definition}

The class of all precontact lattices is denoted by $\mathbf{PreDL}$.
If $L$ is a Boolean algebra and $\mathsf{C}$ is a precontact relation
on $L$, we say that $\langle L,\mathsf{C}\rangle$ is a precontact
Boolean algebra or proximity Boolean algebra \cite{Dimov1,Duntsch2}.
We also consider precontact lattices satisfying some additional axioms: 
\begin{enumerate}
\item[(PC4)] Axiom of reflexivity: If $a\neq0$ then $(a,a)\in\mathsf{C}$. 
\item[(PC5)] Axiom of symmetry: If $(a,b)\in\mathsf{C}$ then $(b,a)\in\mathsf{C}$. 
\item[(PC6)] Axiom of normality: If $(a,b)\notin\mathsf{C}$ then there exist
$c,d$ such that $(a,c)\notin\mathsf{C}$, $(d,b)\notin\mathsf{C}$,
and $c\vee d=1$. 
\item[(PC7)] Axiom of extensionality: \label{(PC7)}If $a\neq1$, then there exists
$b\neq0$ such that $(a,b)\notin\mathsf{C}$. 
\end{enumerate}
\begin{definition} \label{normal and compingent lattice} A precontact
lattice $\langle L,\mathsf{C}\rangle$ is called a contact lattice
if it satisfies axioms (PC4) and (PC5). A contact lattice $\langle L,\mathsf{C}\rangle$
is called normal if it satisfies axiom (PC6). A normal contact lattice
$\langle L,\mathsf{C}\rangle$ is called compingent if it satisfies
axiom (PC7). A contact lattice $\langle L,\mathsf{C}\rangle$ is called
extensional if it satisfies axiom (PC7). \end{definition}

\begin{remark} The notion of contact lattices was first introduced
in \cite{Duntsch1}. Normal contact lattices extend the class SubS5
algebras (see \cite{Bezha1}), called quasi-monadic algebras in \cite{Celani2}.
Extensional contact lattices extend extensional contact algebras studied,
for example, in \cite{Dimov2}. Moreover, when $L$ is a Boolean algebra,
condition (PC7) is equivalent to the following: 
\[
\mathsf{C}(a)\subseteq\mathsf{C}(b)\text{ implies }a\leq b,
\]
for all $a,b\in B$ (see, for instance \cite{Dimov2}). Finally, compingent
contact lattices extend compingent contact algebras studied by De
Vries algebras in \cite{DeVries}. \end{remark}

Most of the items in the following lemma are known results (see, for
example \cite{Duntsch1}). For the sake of completeness, we include
a proof of item (6).

\begin{lemma}\label{propp} Let $\langle L,\mathsf{C}\rangle$ be
a precontact lattice, and let $a,b\in L$. Then 
\begin{enumerate}
\item $\langle L,\mathsf{C}^{-1}\rangle$ is a precontact lattice. 
\item If $(a,b)\in\mathsf{C}$, $a\leq a'$ and $b\leq b'$ then $(a',b')\in\mathsf{C}$. 
\item If $a\neq0$ and $(a,a)\in\mathsf{C}$ then $(a,1)\in\mathsf{C}$. 
\item The sets $L\setminus\mathsf{C}(a)=\{b\in L:(a,b)\notin\mathsf{C}\}$
and $L\setminus\mathsf{C}^{-1}(a)=\{x\in L:(x,a)\notin\mathsf{C}\}$
are ideals of $L$. 
\item If $(a,b)\in\mathsf{C}$, then $[a)\times[b)\subseteq\mathsf{C}$. 
\item If $F$ and $G$ are filters of $L$ and $F\times G\subseteq\mathsf{C}$,
then there exist prime filters $P$ and $Q$ such that $F\subseteq P$,
$G\subseteq Q$, and $\textcolor{blue}{P\times Q \subseteq\mathsf{C}}$. 
\item For all $a,b\in L$, if $a\mathsf{C}b$, then there exist prime filters
$F$ and $G$ such that $F\times G\subseteq\mathsf{C}$, $a\in F$,
and $b\in G$. 
\end{enumerate}
\end{lemma}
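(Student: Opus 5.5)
The proof of item (6) will be a two-stage Zorn's lemma argument: first enlarge $F$ to a prime filter with $G$ fixed, then enlarge $G$ with that prime filter fixed. Item (7) will follow from item (6) using item (5). Items (1)--(5) are routine consequences of (PC1)--(PC3). Item (1) holds because the axioms are symmetric under swapping coordinates. Item (2) follows from (PC2) and (PC3): if $a\le a'$ then $a'=a\vee a'$, and $(a,b)\in\mathsf{C}$ gives $(a\vee a',b)\in\mathsf{C}$; the second coordinate is handled the same way. Item (3) follows from (2) since $a\le 1$. For item (4), (PC1) gives $0\notin\mathsf{C}(a)$, so $0\in L\setminus\mathsf{C}(a)$. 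By (2) the set $L\setminus\mathsf{C}(a)$ is a downset, and by (PC3) it is closed under joins; the other set is an ideal by (1). Item (5) is (2) restated.

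For item (6), the key step is an extension lemma. Let $F,G$ be filters with $F\times G\subseteq\mathsf{C}$, and consider
\[
\Sigma=\{H\in\mathrm{Fi}(L): F\subseteq H,\ H\times G\subseteq\mathsf{C}\}.
\]
$\Sigma$ is nonempty since it contains $F$, and it is closed under unions of chains, because membership of a pair in $H\times G$ involves only one element of $H$. By Zorn's lemma, $\Sigma$ has a maximal element $P$. If $G\neq\emptyset$, then $P$ is proper: otherwise $0\in P$, which gives $(0,g)\in\mathsf{C}$, contradicting (PC1).

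To show $P$ is prime, suppose $a\vee b\in P$ but $a,b\notin P$. Maximality gives $g_1,g_2\in G$ and $p_1,p_2\in P$ with $(p_1\wedge a,g_1)\notin\mathsf{C}$ and $(p_2\wedge b,g_2)\notin\mathsf{C}$. Here we use that $\mathrm{Fg}(P\cup\{a\})$ consists of the elements above some $p\wedge a$, together with (2). Set $p=p_1\wedge p_2$ and $g=g_1\wedge g_2\in G$. By (2), $(p\wedge a,g)\notin\mathsf{C}$ and $(p\wedge b,g)\notin\mathsf{C}$. By distributivity $(p\wedge a)\vee(p\wedge b)=p\wedge(a\vee b)\in P$, so (PC2) yields $(p\wedge(a\vee b),g)\notin\mathsf{C}$, contradicting $P\times G\subseteq\mathsf{C}$.

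Now apply the same argument to the pair $(P,G)$ in the second coordinate, justified by (1) or by running the argument with (PC3). This gives a prime $Q\supseteq G$ with $P\times Q\subseteq\mathsf{C}$. The main obstacle I expect is the primeness step above: combining the two witnesses into a common $p$ and $g$ requires exactly the monotonicity from (2) together with the join axiom.

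For item (7), if $a\mathsf{C}b$ then item (5) gives $[a)\times[b)\subseteq\mathsf{C}$, and item (6) applied to $[a)$ and $[b)$ produces the required prime filters $F\ni a$ and $G\ni b$ with $F\times G\subseteq\mathsf{C}$.
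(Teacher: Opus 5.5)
Your proof is correct and follows the paper's architecture for (6): Zorn's lemma on filters $H\supseteq F$ with $H\times G\subseteq\mathsf{C}$, primeness of a maximal element, then the same argument in the second coordinate. The paper proves only (6) and cites (1)--(5) and (7) as known, so your derivations of those items are an addition.

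The one real difference is the primeness step, and there your version is the sound one. The paper asserts $P\subsetneq P_a\cap P_b$ and applies maximality to $P_a\cap P_b$. But when $a\vee b\in P$, distributivity gives $P_a\cap P_b=P$; the paper's own computation $p\wedge(a\vee b)\le x$ shows exactly this. So that strict inclusion is false, and the paper's ``contradiction'' comes from it rather than from $a,b\notin P$.

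The paper's argument also never invokes (PC2), which cannot be dispensed with. Take the four-element Boolean lattice $\{0,a,b,1\}$ and the relation $\{(1,a),(1,b),(1,1)\}$. It satisfies (PC1), (PC3) and monotonicity. Yet with $F=G=\{1\}$ the only filter $H\supseteq F$ with $H\times G\subseteq\mathsf{C}$ is $\{1\}$, which is not prime, and no prime $P,Q$ with $P\times Q\subseteq\mathsf{C}$ exist.

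Your steps are precisely what is needed. You apply maximality separately to $P_a$ and $P_b$, pass to common witnesses $p=p_1\wedge p_2$ and $g=g_1\wedge g_2$ via monotonicity, and then use (PC2) on $(p\wedge a)\vee(p\wedge b)=p\wedge(a\vee b)\in P$. Your check that $P$ is proper via (PC1) also fills a small omission in the paper.
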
 
\begin{proof}
(6) Let $F$ and $G$ be filters such that $F\times G\subseteq\mathsf{C}$.
Consider the family 
\[
\mathcal{F}=\{P\in\operatorname{Fi}(L):F\subseteq P\text{ and }P\times G\subseteq\mathsf{C}\}.
\]
Clearly, $\mathcal{F}$ is nonempty since $F\in\mathcal{F}$. Moreover,
for every chain $\mathcal{C}\subseteq\mathcal{F}$, we have that $\bigcup\mathcal{C}\in\mathcal{F}$,
so by Zorn’s Lemma, $\mathcal{F}$ has a maximal element. Let $P$
be a maximal element of $\mathcal{F}$. We now show that $P$ is prime.
Suppose that $a,b\in L$ satisfy $a\vee b\in P$, but assume, for
contradiction, that $a,b\notin P$. Consider the filters $P_{a}=\mathrm{Fg}(P\cup\{a\})$
and $P_{b}=\mathrm{Fg}(P\cup\{b\})$. Then $P\subsetneq P_{a}$ and
$P\subsetneq P_{b}$, and hence $P\subsetneq P_{a}\cap P_{b}$. Since
$P$ is maximal in $\mathcal{F}$, we have $(P_{a}\cap P_{b})\times G\nsubseteq\mathsf{C}$.
Thus, there exist $x\in P_{a}\cap P_{b}$ and $y\in G$ such that
$(x,y)\notin\mathsf{C}$. Since $x\in P_{a}\cap P_{b}$, there exist
$p_{1},p_{2}\in P$ such that $p_{1}\wedge a\leq x$ and $p_{2}\wedge b\leq x$.
Let $p=p_{1}\wedge p_{2}$. Then, $p\wedge(a\vee b)\leq x$. Since
$p,a\vee b\in P$ and $P$ is upward closed, we conclude that $x\in P$.
This contradicts $(x,y)\notin\mathsf{C}$, since $P\times G\subseteq\mathsf{C}$.
Hence, $a$ or $b$ must be in $P$, i.e., $P$ is a prime filter.
Applying the same argument to $G$, we obtain a prime filter $Q$
such that $G\subseteq Q$ and $(P,Q)\in R_{\mathsf{C}}$. This completes
the proof. 
\end{proof}

Now we will present some examples of precontact relations in bounded
distributive lattices. 
\begin{example}
\label{trivial prec} In every bounded distributive lattice $L$,
one can define the overlap relation $\mathsf{O}$ by setting 
\[
(a,b)\in\mathsf{O}\Longleftrightarrow a\wedge b\neq0.
\]
This relation is a contact relation. Moreover, $\mathsf{O}$ is the
least contact relation on $L$ with respect to set inclusion. 
\end{example}

\begin{example}
\label{ex neg} The variety of bounded distributive lattices with
a negation operator, or $\neg$-lattices, was introduced in \cite{Celani6}.
A $\neg$-lattice is a pair $\langle L,\neg\rangle$ where $L$ is
a bounded distributive lattice and $\neg$ is a unary operator satisfying
the conditions, for all $a,b\in L$: 
\begin{enumerate}
\item $\neg0=1$, and 
\item $\neg(a\vee b)=\neg a\wedge\neg b$. 
\end{enumerate}
The operator $\neg$ is a generalization of well-known negations,
such as the intuitionistic negation or De Morgan negation.

In every $\neg$-lattice we can define a precontact relation $\mathsf{C}_{\neg}\subseteq L\times L$
as 
\[
(a,b)\in\mathsf{C}_{\neg}\Longleftrightarrow a\nleq\neg b\Longleftrightarrow a\in(\neg b]^{c}.
\]
Therefore, $\langle L,\mathsf{C}_{\neg}\rangle$ is a precontact lattice.
When $L$ is a Boolean algebra, the precontact relation $\mathsf{C}_{\neg}$
coincides with the overlap relation $\mathsf{O}$ defined in the Example
\ref{trivial prec}. We will refer to precontact lattices in which
the precontact relation is induced by a negation operator as negationally
definable precontact relation. We note that, in the setting of Boolean
algebras, the generalized negation operator is also referred to as
the sufficiency operator (see \cite{Duntsch-Ohrowska2001,Duntsch-Ohrowska-Tinchev2017})
\end{example}

\begin{example}
Let $L$ be a bounded distributive lattice and let $I\subseteq L$
be an ideal. We define two precontact relations on $L$ as follows: 
\begin{enumerate}
\item The relation $\mathsf{C}_{I}$ is given by $(a,b)\in\mathsf{C}_{I}\Longleftrightarrow a\wedge b\notin I$. 
\item The relation $\mathsf{C}^{I}$ is given by $(a,b)\in\mathsf{C}^{I}\Longleftrightarrow a,b\notin I$. 
\end{enumerate}
Both relations satisfy axiom (PC5). Moreover, since $I$ is an ideal,
it follows that if $a\wedge b\notin I$, then $a,b\notin I$. Therefore,
$\mathsf{C}_{I}\subseteq\mathsf{C}^{I}$. 
\end{example}

\begin{example}
\label{neg gen} We now consider an example that generalizes the notion
of negation introduced in Example \ref{ex neg}. Let $L$ be a bounded
distributive lattice. We say that a map $N:L\longrightarrow{\rm {Id}}(L)$
is a generalized negation if it satisfies the following conditions
for all $a,b\in L$: 
\begin{enumerate}
\item $N(0)=L$, 
\item $N(a\vee b)=N(a)\cap N(b)$. 
\end{enumerate}
We show that precontact relations in bounded distributive lattices
are in 1-1 correspondence with the generalized negation. Let $N$
be a generalized negation defined on a bounded distributive lattice
$L$. Then, it is easy to see that the relation $\mathsf{C}_{N}$
defined by 
\[
(a,b)\in\mathsf{C}_{N}\text{ iff }a\notin N(b),
\]
is a precontact relation on $L$. Conversely, if $\mathsf{C}$ is
a precontact relation defined on $L$, then the function $N_{\mathsf{C}}:L\longrightarrow{\rm {Id}}(L)$
defined by $N_{\mathsf{C}}(a)=(\mathsf{C}^{-1}(a))^{c}$ for all $a\in L$,
is a generalized negation defined on $L$. Moreover, $\mathsf{C}=\mathsf{C}_{N_{\mathsf{C}}}$
and $N=N_{\mathsf{C}_{N}}$. 
\end{example}

\begin{example}
\label{modal-prec-unified} Let $(L,\lozenge)$ be a $\lozenge$-modal
lattice. Define the relations $\mathsf{C}_{\lozenge}$ and $\mathsf{C}^{\lozenge}$
on $L$ by 
\[
\begin{array}{ccc}
(a,b)\in\mathsf{C}_{\lozenge} & \Longleftrightarrow & a\wedge\lozenge b\neq0,\\[1mm]
(a,b)\in\mathsf{C}^{\lozenge} & \Longleftrightarrow & \lozenge a\wedge\lozenge b\neq0.
\end{array}
\]
Then both $\mathsf{C}_{\lozenge}$ and $\mathsf{C}^{\lozenge}$ are
precontact relations on $L$. Moreover if $a\leq\lozenge a$, then
$\mathsf{C}_{\lozenge}\subseteq\mathsf{C}^{\lozenge}$. If $L$ is
a Boolean algebra the operator $\Box$ is such that $\lozenge x=\neg\Box\neg x$.
Then the relation $\mathsf{C}_{\lozenge}$ coincides with the relation
$\mathsf{C}_{\Box}$ given by 
\[
(a,b)\in\mathsf{C}_{\Box}\Longleftrightarrow a\nleq\Box\neg b.
\]
Following the terminology introduced in \cite{Bezha1}, we will refer
to Boolean precontact algebras in which the precontact relation arises
from a modal operator as modally definable precontact algebras. 
\end{example}

\begin{example}
\label{contacto-relacional} \cite{Dimov1,Duntsch2} Let $\langle X,R\rangle$
be a relational structure, where $X\neq\emptyset$ and $R\subseteq X\times X$
is a binary relation on $X$. We define a binary relation $\mathsf{C}_{R}$
on $\mathcal{P}(X)$ by 
\[
(U,V)\in\mathsf{C}_{R}\ \Longleftrightarrow\ (U\times V)\cap R\neq\emptyset,
\]
for all $U,V\subseteq X$. It is easy to verify that $\mathsf{C}_{R}$
is a precontact relation on the Boolean algebra $\mathcal{P}(X)$.
Moreover, we define the modal and negation operators: 
\[
\square_{R}(U)=\{x\in X:R(x)\subseteq U\}\quad\text{and}\quad\neg_{R}(U)=\{x\in X:R(x)\cap U=\emptyset\},
\]
for each $U\subseteq X$. Then, it is easy to see that: 
\[
(U,V)\in\mathsf{C}_{\square_{R}}\Longleftrightarrow U\nsubseteq\square_{R}(V^{c})\quad\text{and}\quad(U,V)\in\mathsf{C}_{\neg_{R}}\Longleftrightarrow U\nsubseteq\neg_{R}(V).
\]
It can be shown that the three relations coincide: 
\[
\mathsf{C}_{R}=\mathsf{C}_{\square_{R}}=\mathsf{C}_{\neg_{R}}.
\]
Thus, they provide equivalent ways of defining a precontact relation
on $\mathcal{P}(X)$. 
\end{example}

\begin{example}
This example is inspired by the results presented in \cite[Section 7]{Gruszczynski-Menchon2023}.
Let $L$ be a bounded distributive lattice. Let $S$ be a non-empty
subset of $L$. Define a relation $\mathsf{C}_{S}\subseteq L\times L$
as follows: 
\[
\left(a,b\right)\in\mathsf{C}_{S}\text{ iff }\exists c\in S\,\left(a\wedge c\neq0\text{ and }b\wedge c\neq0\right).
\]
Then, $\mathsf{C}_{S}$ is a symmetric precontact relation. Moreover,
when $\bigvee S=1,$ we can show that $\mathsf{C}_{S}${} satisfies
the axiom of reflexivity, so that $\left<L,\mathsf{C}_{S}\right>$
is a contact lattice in this case. 
\end{example}

In the literature on contact and precontact algebras, a commonly considered
notion of homomorphism is that of a Boolean homomorphism that reflects
the contact relation. Motivated by the connection between modal algebras
and precontact relations, we consider two classes of homomorphisms
precontact lattices.

\medskip{}
Let $\langle L_{1},\mathsf{C}_{1}\rangle$ and $\langle L_{2},\mathsf{C}_{2}\rangle$
be two precontact lattices. Let $h:{L}_{1}\longrightarrow{L}_{2}$
be a lattice homomorphism. We consider the following conditions: 
\begin{enumerate}
\item[$\mathrm{\mathrm{(CH1)}}$] $(h(a),h(b))\in\mathsf{C}_{2}$ implies that $(a,b)\in\mathsf{C}_{1}$,
for all $a,b\in L_{1}$. 
\item[$\mathrm{\mathrm{(CH2)}}$] If $(a,h(x))\notin\mathsf{C}_{2}$ then there exists $b\in L_{1}$
such that $(b,x)\notin\mathsf{C}_{1}$ and $a\leq h(b)$. 
\item[$\mathrm{\mathrm{(CH3)}}$] If $(h(x),a)\notin\mathsf{C}_{2}$ then there exists $b\in L_{1}$
such that $(x,b)\notin\mathsf{C}_{1}$ and $a\leq h(b)$. 
\end{enumerate}
\begin{remark}

In this context, condition $\mathrm{\mathrm{\mathrm{(CH3)}}}$ can
be seen as a dual formulation of $\mathrm{\mathrm{\mathrm{(CH2)}}}$.
Moreover, when $\mathsf{C}_{{1}}$ and $\mathsf{C}_{2}$ are symmetric,
the conditions $\mathrm{\mathrm{\mathrm{(CH2)}}}$ and $\mathrm{\mathrm{\mathrm{(CH3)}}}$
are equivalent.

\end{remark}

\begin{definition}\label{hom_prec} Let $h:\langle L_{1},\mathsf{C}_{1}\rangle\rightarrow\langle L_{2},\mathsf{C}_{2}\rangle$
be a map between precontact structures. We say that $h$ is a 
\begin{itemize}
\item precontact homomorphism if it is a lattice homomorphism that satisfies
the condition $\mathrm{\mathrm{\mathrm{(CH1)}}}$. 
\item strong precontact homomorphism if it is a lattice homomorphism that
satisfies both $\mathrm{\mathrm{\mathrm{(CH1)}}}$ and $\mathrm{\mathrm{\mathrm{(CH2)}}}$. 
\item precontact isomorphism if $h$ is a lattice isomorphism such that
$(a,b)\in\mathsf{C}_{1}$ if and only if $(h(a),h(b))\in\mathsf{C}_{2}$
for all $a,b\in L_{1}$. 
\item strong precontact isomorphism if $h$ is a lattice isomorphism and
satisfies the conditions of a strong precontact homomorphism. 
\end{itemize}
\end{definition}
\begin{example}
Let $\langle L,\mathsf{C}\rangle$ be a precontact lattice. The identity
map $\mathrm{id}_{L}:L\to L$ is a strong precontact isomorphism. 
\end{example}

\begin{example}
Let $L_{1}=\{0,a,b,1\}$ the Boolean lattice with atoms $a$ and $b$,
and let $L_{2}=\{0,1\}$. Define a lattice homomorphism $h:L_{1}\to L_{2}$
by 
\[
h(0)=0,\,h(a)=0,\,h(b)=0,\,h(1)=1.
\]
Let $\mathsf{C}_{1}=\{(1,1),(1,b),(1,a),(a,1),(a,b),(b,1)\}$ and
let $\mathsf{C}_{2}=\{(1,1)\}$. Then $h:(L_{1},\mathsf{C}_{1})\to(L_{2},\mathsf{C}_{2})$
is a precontact homomorphism, since it satisfies (CH1). However, $h$
does not satisfy (CH2) because $(1,h(a))\notin\mathsf{C}_{2}$ and
for all $t\in L_{1}$ such that $1\leq h(t)$ we obtain that $(t,a)\in\mathsf{C}_{1}$.
Moreover, one can also check that condition (CH3) fails. 
\end{example}

\begin{remark} By the previous example, we conclude that the notion
of precontact homomorphism does not coincide with that of strong precontact
homomorphism. In particular, a precontact homomorphism need not be
a strong precontact homomorphism. \end{remark} In this work, we will
focus on precontact homomorphisms and strong precontact homomorphisms.
Additionally, homomorphisms that satisfy (CH1) and (CH3) can be studied
in a manner dual to strong precontact homomorphisms. That is, while
strong precontact homomorphisms require the additional condition $\mathrm{\mathrm{\mathrm{(CH2)}}}$,
their dual counterpart would consider the condition $\mathrm{(CH3)}$.

Let $(A,\textcolor{blue}{\square_{1}})$ and $(B,\textcolor{blue}{\square_{2}})$
be modal algebras. Recall that a modal homomorphism $h:(A,\square_{1})\rightarrow(B,\square_{2})$
is a Boolean homomorphism such that $h(\square_{1}a)=\square_{2}(h(a))$
for all $a\in A$. We show that, in the Boolean modal setting, strong
precontact homomorphisms coincide with the usual notion of modal homomorphism.
The following result requires recalling the precontact relation defined
via a modal operator, as introduced in Example \ref{modal-prec-unified}.

\begin{proposition}\label{homo modales} \label{relacion con homo booleano}
Let $(A,\square_{1})$ and $(B,\square_{2})$ be modal algebras, and
let $h:A\rightarrow B$ be a Boolean homomorphism. Then: 
\begin{enumerate}
\item $h(\square_{1}a)\leq\square_{2}h(a)$ for all $a\in A$ if and only
if $h:(A,\mathsf{C}_{\square_{1}})\rightarrow(B,\mathsf{C}_{\square_{2}})$
satisfies condition (CH1). 
\item $\square_{2}h(a)\leq h(\square_{1}a)$ for all $a\in A$ if and only
if $h(A,\mathsf{C}_{\square_{1}})\rightarrow(B,\mathsf{C}_{\square_{2}})$
satisfies condition (CH2). 
\item $h(\square_{1}a)=\square_{2}h(a)$ for all $a\in A$ if and only if
$h:(A,\mathsf{C}_{\square_{1}})\rightarrow(B,\mathsf{C}_{\square_{2}})$
is a strong precontact homomorphism. 
\end{enumerate}
\end{proposition}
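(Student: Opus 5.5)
The plan is to unfold the definition of $\mathsf{C}_{\square}$ in each condition and use Boolean complementation. Recall that $(a,b)\in\mathsf{C}_{\square}$ iff $a\nleq\square\neg b$, so $(a,b)\notin\mathsf{C}_{\square}$ iff $a\leq\square\neg b$. Since $h$ is a Boolean homomorphism, $h(\neg b)=\neg h(b)$. Throughout, we will substitute $b=\neg c$, which is legitimate because $\neg$ is a bijection on $A$. Item (3) then follows immediately from (1) and (2), by the definition of strong precontact homomorphism (Definition \ref{hom_prec}) and antisymmetry of $\leq$.

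For (1), note that (CH1) in contrapositive form reads: $a\leq\square_{1}\neg b$ implies $h(a)\leq\square_{2}\neg h(b)=\square_{2}h(\neg b)$.
\begin{itemize}
\item Assume first $h(\square_{1}c)\leq\square_{2}h(c)$ for all $c$. If $a\leq\square_{1}\neg b$, then by monotonicity of $h$ we get $h(a)\leq h(\square_{1}\neg b)\leq\square_{2}h(\neg b)$, which is (CH1).
\item Conversely, assume (CH1). Given $c$, put $a=\square_{1}c$ and $b=\neg c$. Trivially $(a,b)\notin\mathsf{C}_{\square_{1}}$, so (CH1) yields $(h(a),h(b))\notin\mathsf{C}_{\square_{2}}$. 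This says $h(\square_{1}c)\leq\square_{2}\neg h(\neg c)=\square_{2}h(c)$.
\end{itemize}

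For (2), (CH2) reads: if $a\leq\square_{2}\neg h(x)$, then there is $b\in A$ with $b\leq\square_{1}\neg x$ and $a\leq h(b)$.
\begin{itemize}
\item Assume $\square_{2}h(c)\leq h(\square_{1}c)$ for all $c$. Given $a\leq\square_{2}\neg h(x)=\square_{2}h(\neg x)$, take $b=\square_{1}\neg x$. Then $a\leq\square_{2}h(\neg x)\leq h(\square_{1}\neg x)=h(b)$, as required.
\item Conversely, assume (CH2). Given $c\in A$, set $x=\neg c$ and $a=\square_{2}h(c)$, so that $a\leq\square_{2}\neg h(x)$ holds. (CH2) provides $b$ with $b\leq\square_{1}c$ and $a\leq h(b)$. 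By monotonicity of $h$, $\square_{2}h(c)\leq h(b)\leq h(\square_{1}c)$.
\end{itemize}

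There is no real obstacle here. The only point needing care is this converse direction of (2): one has to choose the witness instance $a=\square_{2}h(c)$ and then exploit that the element $b$ supplied by (CH2) lies below $\square_{1}c$.
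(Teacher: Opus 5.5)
Your proposal is correct and follows the paper's proof essentially step for step. You use the same witnesses: $(\square_{1}c,\neg c)$ for the converse of (1), $b=\square_{1}\neg x$ for the forward direction of (2), and the instance $(\square_{2}h(c),h(\neg c))\notin\mathsf{C}_{\square_{2}}$ for the converse of (2). Item (3) then follows by combining (1) and (2).
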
 
\begin{proof}
$(1)$ Assume that $h(\square_{1}a)\leq\square_{2}h(a)$ for all $a\in A$.
Suppose that there $a,b\in A$ such that $(a,b)\notin\mathsf{C}_{\square_{1}}$.
By the hypothesis and using that $h$ is a Boolean homomorphism, we
have $h(a)\leq h(\square_{1}\neg b)\leq\square_{2}h(\neg b)=\square_{2}\neg h(b)$,
so $(h(a),h(b))\notin\mathsf{C}_{\square_{2}}$. Conversely, suppose
that $h:(A,\mathsf{C}_{\square_{1}})\rightarrow(B,\mathsf{C}_{\square_{2}})$
satisfies condition (CH1). Let $a\in A$. Since $(\square_{1}a,\neg a)\notin\mathsf{C}_{\square_{1}}$,
it follows that $(h(\square_{1}a),h(\neg a))\notin\mathsf{C}_{\square_{2}}$.
This implies $h(\square_{1}a)\leq\square_{2}\neg(h(\neg a))=\square_{2}h(a)$.

$(2)$ Assume that $\square_{2}h(a)\leq h(\square_{1}a)$ for all
$a\in A$. Let $(x,h(b))\notin\mathsf{C}_{\square_{2}}$, so $x\leq\square_{2}\neg h(b)$.
By hypothesis and since $h$ is a Boolean homomorphism, $\square_{2}\neg h(b)=\square_{2}h(\neg b)\leq h(\square_{1}\neg b)$.
Taking $c=\square_{1}\neg b$, we conclude that there exists $c\in A$
such that $(c,b)\notin\mathsf{C}_{\square_{1}}$ and $x\leq h(c)$.

Conversely, suppose $h:(A,\mathsf{C}_{\square_{1}})\longrightarrow(B,\mathsf{C}_{\square_{2}})$
satisfies condition {(CH2)}. Let $a\in A$. Since $(\square_{2}h(a),h(\neg a))\notin\mathsf{C}_{\square_{2}}$,
there exists $b\in A$ such that $(b,\neg a)\notin\mathsf{C}_{\square_{1}}$
and $\square_{2}h(a)\leq h(b)$. So, $b\leq\square_{1}a$. As $h$
is a Boolean homomorphism, it is in particular a lattice homomorphism
and therefore order-preserving; consequently, $h(b)\leq h(\square_{1}a)$.
Thus, $\square_{2}h(a)\leq h(\square_{1}a)$. 

$(3)$ Follows immediately from $(1)$ and $(2)$. 
\end{proof}

It is easy to see that the composition of precontact homomorphisms
(strong precontact homomorphisms) is a precontact homomorphism (strong
precontact homomorphism)

Now we are in a position to define two algebraic categories. Both
categories have the same objects but differ in their morphisms.
\begin{center}
\begin{tabular}{|c|c|}
\hline 
Category  & Details \tabularnewline
\hline 
PreDL  & Objects: Precontact Distributive Lattices \tabularnewline
 & Morphisms: Precontact Homomorphisms \tabularnewline
\hline 
PreDLS  & Objects: Precontact Distributive Lattices \tabularnewline
 & Morphisms: Strong Precontact Homomorphisms \tabularnewline
\hline 
\end{tabular}
\par\end{center}

\section{Representation and duality}

\label{sec:Priestley-duality}

This section is devoted to the study of the representation and Priestley
duality theory for precontact lattices.

Let $\langle L,\mathsf{C}\rangle$ be a precontact lattice. To simplify
the notation and to be able to make a comparison to some results about
lattice with negation from \cite{Celani6} (especially the representation
theory), we introduce the maps 
\[
\alpha,\lambda:L\to\mathrm{Id}(L)
\]
defined by 
\[
\alpha(a)=L\setminus\mathsf{C}^{-1}(a)\quad\text{and}\quad\lambda(a)=L\setminus\mathsf{C}(a),
\]
for each $a\in L$. Both $\alpha$ and $\lambda$ are generalized
negations (see Example~\ref{neg gen}). The fact that these are well-defined
as maps into $\mathrm{Id}(L)$ follows from item~(4) in Lemma~\ref{propp},
which ensures that, for each $a\in L$, the sets $\alpha(a)$ and
$\lambda(a)$ are ideals of $L$. These will be used to define the
following subsets. For each $B\subseteq L$ let: 
\begin{center}
$\alpha^{-1}(B)=\{a:\alpha(a)\cap B\neq\emptyset\}$ 
\par\end{center}

and 
\begin{center}
$\lambda^{-1}(B)=\{a:\lambda(a)\cap B\neq\emptyset\}$. 
\par\end{center}

To provide a topological representation for distributive precontact
lattices we define a binary relation $R_{\mathsf{C}}$ on $\mathrm{X}(L)$
as follows: 
\[
(P,Q)\in R_{\mathsf{C}}\text{ iff }P\times Q\subseteq\mathsf{C}.
\]

We observe that the definition of the relation $R_{\mathsf{C}}$ is
a generalization of the similar relation on ultrafilters defined in
the framework of Boolean contact algebras (\cite{Dimov1,Dimov3,Duntsch2}).
This relation is referred to as the canonical relation of $\mathsf{C}$,
and the relational system $\langle{\rm {X}}(L),R_{\mathsf{C}}\rangle$
is called the canonical structure of $\langle L,\mathsf{C}\rangle$
or the discrete precontact space of $\langle L,\mathsf{C}\rangle$.

We now present several equivalent formulations of the relation $R_{\mathsf{C}}$,
which plays a central role throughout the paper. In particular, the
characterization involving the set $\alpha^{-1}(P)$ will be used
as our main working definition. A dual version, based on $\lambda^{-1}(Q)$,
will also be useful in symmetric arguments.

\begin{lemma}\label{R alternativas} Let $\langle L,\mathsf{C}\rangle$
be a precontact lattice, and let $P,Q\in{\rm X}(L)$. The following
conditions are equivalent: 
\begin{enumerate}
\item $P\times Q\subseteq\mathsf{C}$. 
\item $\alpha^{-1}(P)\cap Q=\emptyset$. 
\item $\lambda^{-1}(Q)\cap P=\emptyset$. 
\end{enumerate}
\end{lemma}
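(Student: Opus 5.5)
The plan is to unfold the definitions of $\alpha$, $\lambda$, $\alpha^{-1}$ and $\lambda^{-1}$. Each of conditions (2) and (3) then becomes a restatement of ``there is no pair $(p,q)\in P\times Q$ with $(p,q)\notin\mathsf{C}$''. I will prove $(1)\Leftrightarrow(2)$ and then $(1)\Leftrightarrow(3)$, each directly by contraposition.

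Recall that $\alpha(a)=L\setminus\mathsf{C}^{-1}(a)=\{x\in L:(x,a)\notin\mathsf{C}\}$. Hence $a\in\alpha^{-1}(P)$ iff there is $p\in P$ with $(p,a)\notin\mathsf{C}$. Consequently $\alpha^{-1}(P)\cap Q\neq\emptyset$ iff there exist $q\in Q$ and $p\in P$ with $(p,q)\notin\mathsf{C}$, i.e.\ iff $P\times Q\nsubseteq\mathsf{C}$. This gives $(1)\Leftrightarrow(2)$.

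Symmetrically, $\lambda(a)=L\setminus\mathsf{C}(a)=\{b\in L:(a,b)\notin\mathsf{C}\}$. So $a\in\lambda^{-1}(Q)$ iff there is $q\in Q$ with $(a,q)\notin\mathsf{C}$. Then $\lambda^{-1}(Q)\cap P\neq\emptyset$ iff some $p\in P$ and $q\in Q$ satisfy $(p,q)\notin\mathsf{C}$, which is again the negation of (1). This gives $(1)\Leftrightarrow(3)$.

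There is no real obstacle. The only care needed is bookkeeping the orientation of $\mathsf{C}$ versus $\mathsf{C}^{-1}$ in the definitions of $\alpha$ and $\lambda$, making sure that $\alpha(a)$ collects first coordinates and $\lambda(a)$ second coordinates. Primeness of $P$ and $Q$ and the axioms (PC1)--(PC3) are not needed: the equivalence holds for arbitrary subsets $P,Q\subseteq L$.
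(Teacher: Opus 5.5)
Your proof is correct and matches the paper's argument: the paper also proves $(1)\Leftrightarrow(2)$ by unfolding $\alpha$ and $\alpha^{-1}$ and arguing by contraposition, exactly as you do. The only difference is that the paper treats the remaining equivalence as $(2)\Leftrightarrow(3)$ and leaves it to a citation, whereas you check $(1)\Leftrightarrow(3)$ directly by the symmetric unfolding of $\lambda$, which is a harmless and slightly more self-contained variation.
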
 
\begin{proof}
We will only prove $(1)\Leftrightarrow(2)$, since the equivalence
$(2)\Leftrightarrow(3)$ is straightforward (see \cite{Duntsch1}).
Thus, reasoning by contraposition: 
\[
\begin{array}{rl}
\alpha^{-1}(P)\cap Q\neq\emptyset & \Leftrightarrow\ \exists b\in Q,\ \exists a\in P\text{ such that }a\in\alpha(b)\\
 & \Leftrightarrow\ \exists(a,b)\in P\times Q\text{ such that }(a,b)\notin\mathsf{C}\\
 & \Leftrightarrow\ P\times Q\nsubseteq\mathsf{C}.
\end{array}
\]
\end{proof}

The following lemmas play a fundamental role in the development of
the theory of precontact lattices and will be used repeatedly throughout
the paper.

\begin{lemma}\label{separacion} Let $\langle L,\mathsf{C}\rangle$
be a precontact lattice. Then for each $P\in{\rm X}(L)$, the set
$\alpha^{-1}(P)$ is an ideal. Moreover $a\notin\alpha^{-1}(P)$ iff
there exists $Q\in{\rm X}(L)$ such that $\alpha^{-1}(P)\cap Q=\emptyset$
and $a\in Q$.

\end{lemma}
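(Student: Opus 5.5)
We first show that $\alpha^{-1}(P)=\{a:\alpha(a)\cap P\neq\emptyset\}$ is an ideal. Recall that $x\in\alpha(a)$ iff $(x,a)\notin\mathsf{C}$.

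\emph{Zero.} Since $(x,0)\notin\mathsf{C}$ for every $x$ by (PC1), we have $\alpha(0)=L$. As $P$ is nonempty, $0\in\alpha^{-1}(P)$.

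\emph{Downward closure.} Let $a\le b$ with $b\in\alpha^{-1}(P)$. By Lemma~\ref{propp}(2), $(x,b)\notin\mathsf{C}$ forces $(x,a)\notin\mathsf{C}$, so $\alpha(b)\subseteq\alpha(a)$. Hence $\alpha(a)\cap P\neq\emptyset$ and $a\in\alpha^{-1}(P)$.

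\emph{Joins.} Let $a,b\in\alpha^{-1}(P)$, witnessed by $x\in\alpha(a)\cap P$ and $y\in\alpha(b)\cap P$. Put $z=x\wedge y\in P$. By monotonicity, $(z,a)\notin\mathsf{C}$ and $(z,b)\notin\mathsf{C}$. By (PC3), $(z,a\vee b)\notin\mathsf{C}$, so $z\in\alpha(a\vee b)\cap P$ and $a\vee b\in\alpha^{-1}(P)$. In short, $\alpha(a\vee b)=\alpha(a)\cap\alpha(b)$, and the argument uses that $P$ is a filter.

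Next we prove the equivalence. The direction from right to left is immediate: if $Q$ is a prime filter with $\alpha^{-1}(P)\cap Q=\emptyset$ and $a\in Q$, then $a\notin\alpha^{-1}(P)$.

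For the direction from left to right, suppose $a\notin\alpha^{-1}(P)$. Since $\alpha^{-1}(P)$ is an ideal, $[a)\cap\alpha^{-1}(P)=\emptyset$: if some $c\ge a$ lay in the ideal, downward closure would put $a$ in it. The prime filter theorem for distributive lattices then yields a prime filter $Q$ with $[a)\subseteq Q$ and $Q\cap\alpha^{-1}(P)=\emptyset$. This $Q$ is the required prime filter.

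Alternatively, one could note that $P\times[a)\subseteq\mathsf{C}$ and invoke Lemma~\ref{propp}(6). However, that lemma would also replace $P$ by a larger prime filter, which is not what we want. The direct separation argument is cleaner, since $P$ stays fixed.

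The only step requiring care is closure under joins, which uses (PC3) together with the fact that $P$ is closed under meets. Everything else follows from (PC1), monotonicity, and the standard prime filter theorem.
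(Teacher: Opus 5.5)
Your proof is correct and follows the paper's argument. The paper only asserts that $\alpha^{-1}(P)$ is an ideal, and you supply that verification via (PC1), monotonicity and (PC3); it then separates $\mathrm{Fg}(a)$ from this ideal with the Birkhoff--Stone prime filter theorem, exactly as you do (your aside dismissing Lemma~\ref{propp}(6) is unfounded, since $P\subseteq P'$ and $P'\times Q\subseteq\mathsf{C}$ already give $P\times Q\subseteq\mathsf{C}$, so that route works too).
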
 
\begin{proof}
It is easy to verify that $\alpha^{-1}(P)$ is an ideal. Now let $a\notin\alpha^{-1}(P)$.
Then $\mathrm{Fg}(a)\cap\alpha^{-1}(P)=\emptyset$. Since $\alpha^{-1}(P)$
is an ideal, the Birkhoff--Stone theorem guarantees the existence
of a prime filter $Q$ such that $\alpha^{-1}(P)\cap Q=\emptyset$
and $a\in Q$. The converse direction is immediate. 
\end{proof}

We now present the dual version of the previous result in terms of
$\lambda$, which will be useful in symmetric constructions.

\begin{lemma} Let $\langle L,\mathsf{C}\rangle$ be a precontact
lattice. Then for each $P\in{\rm X}(L)$, the set $\lambda^{-1}(P)$
is an ideal. Moreover $a\notin\lambda^{-1}(P)$ iff there exists $Q\in{\rm X}(L)$
such that $\lambda^{-1}(Q)\cap P=\emptyset$ and $a\in Q$. \end{lemma}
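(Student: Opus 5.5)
The plan is to mirror the proof of Lemma~\ref{separacion}, with $\lambda$ in place of $\alpha$, and to use Lemma~\ref{R alternativas} to turn the condition $\lambda^{-1}(Q)\cap P=\emptyset$ into $P\times Q\subseteq\mathsf{C}$. Carrying this out shows that the equivalence as printed holds under the symmetry axiom (PC5) but fails for general precontact lattices.

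\emph{The ideal part.} Unfolding the definition, $\lambda^{-1}(P)=\{a: \exists p\in P,\ (a,p)\notin\mathsf{C}\}$.
\begin{itemize}
\item Downward closure follows from Lemma~\ref{propp}(2): if $b\leq a$ and $(b,p)\in\mathsf{C}$, then $(a,p)\in\mathsf{C}$.
\item It contains $0$ by (PC1).
\item For closure under joins, suppose $(a,p)\notin\mathsf{C}$ and $(b,q)\notin\mathsf{C}$ with $p,q\in P$. Put $r=p\wedge q\in P$. By Lemma~\ref{propp}(2), $(a,r)\notin\mathsf{C}$ and $(b,r)\notin\mathsf{C}$. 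Then (PC2) gives $(a\vee b,r)\notin\mathsf{C}$.
\end{itemize}

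\emph{Unfolding both sides of the equivalence.}
\begin{itemize}
\item The left side: $a\notin\lambda^{-1}(P)$ means $(a,p)\in\mathsf{C}$ for all $p\in P$. By Lemma~\ref{propp}(2) this is the same as $[a)\times P\subseteq\mathsf{C}$.
\item The right side: by Lemma~\ref{R alternativas}, $\lambda^{-1}(Q)\cap P=\emptyset$ is equivalent to $P\times Q\subseteq\mathsf{C}$. So the right side says there is a prime filter $Q\ni a$ with $P\times Q\subseteq\mathsf{C}$.
\end{itemize}

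\emph{The easy direction} ($\Leftarrow$). Given such a $Q$, we have $(p,a)\in\mathsf{C}$ for every $p\in P$. To conclude $(a,p)\in\mathsf{C}$ we need to swap the arguments, and that step uses (PC5). So this direction is immediate once symmetry is available, and I do not see how to avoid it.

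\emph{The hard direction} ($\Rightarrow$), under (PC5). From $[a)\times P\subseteq\mathsf{C}$, symmetry gives $P\times[a)\subseteq\mathsf{C}$. Then apply the Zorn argument of Lemma~\ref{propp}(6) to the second coordinate only, keeping the prime filter $P$ fixed. Take a filter $Q\supseteq[a)$ maximal with $P\times Q\subseteq\mathsf{C}$. Primeness is shown exactly as in that proof. If $x\vee y\in Q$ but $x,y\notin Q$, maximality yields $u\in\mathrm{Fg}(Q\cup\{x\})\cap\mathrm{Fg}(Q\cup\{y\})$ and $p\in P$ with $(p,u)\notin\mathsf{C}$. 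But $u\geq q\wedge(x\vee y)$ for some $q\in Q$, so $u\in Q$, a contradiction.

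\emph{The main obstacle: the orientation of $\mathsf{C}$.} Without (PC5) the equivalence as printed is false. Take $L=\{0,a,b,1\}$ Boolean and set $(x,y)\in\mathsf{C}$ iff $x\wedge a\neq0$ and $y\wedge b\neq0$; this satisfies (PC1)--(PC3).
\begin{itemize}
\item Let $P=[b)$. Then $(a,b),(a,1)\in\mathsf{C}$, so $a\notin\lambda^{-1}(P)$.
\item The only prime filter containing $a$ is $Q=[a)$. Since $(b,a)\notin\mathsf{C}$, we have $P\times Q\not\subseteq\mathsf{C}$.
\end{itemize}
So the right side fails while the left side holds.

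What the argument above does prove in general, with no symmetry, is the correctly oriented dual of Lemma~\ref{separacion}:
\[
a\notin\lambda^{-1}(P)\iff\exists Q\in\mathrm{X}(L)\ \bigl(a\in Q\text{ and }Q\times P\subseteq\mathsf{C}\bigr).
\]
By Lemma~\ref{R alternativas} applied to the pair $(Q,P)$, the condition $Q\times P\subseteq\mathsf{C}$ is equivalent to $\lambda^{-1}(P)\cap Q=\emptyset$. The proof is the same: the hard direction uses the Zorn argument on the first coordinate, and the easy direction needs no symmetry since $(a,p)\in\mathsf{C}$ is read off directly. The printed version is recovered exactly when (PC5) holds.
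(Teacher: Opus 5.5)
Your diagnosis is correct. As printed, the equivalence fails without (PC5), and your counterexample checks out. There $\mathsf{C}=\{a,1\}\times\{b,1\}$ and $P=[b)$; $\lambda(a)=\{0,a\}$ misses $P$, while $b\in\lambda^{-1}([a))\cap P$ because $\lambda(b)=L$.

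The $\Leftarrow$ direction fails as well, so symmetry genuinely cannot be avoided there. Replacing $\mathsf{C}$ by $\mathsf{C}^{-1}$ in the same example makes $Q=[a)$ a witness for the right-hand side, while $(a,b)\notin\mathsf{C}^{-1}$.

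The paper gives no proof of this lemma; it presents it only as the $\lambda$-dual of Lemma~\ref{separacion}. The statement it actually needs is your corrected one, with $\lambda^{-1}(P)\cap Q=\emptyset$, i.e.\ $(Q,P)\in R_{\mathsf{C}}$. This is exactly how it is used in the proof of Lemma~\ref{caract contacto y normales}(3), where $\lambda(a)\cap P=\emptyset$ is turned into a prime $Q\ni a$ with $(Q,P)\in R$. So the printed version is a misprint with $P$ and $Q$ swapped, and your proposal handles it properly.

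Where you diverge from the paper is in method, and the Zorn detour is unnecessary. The paper proves Lemma~\ref{separacion} with the prime filter theorem alone, and that argument transfers verbatim.
Once $\lambda^{-1}(P)$ is an ideal (your verification of this is correct), $a\notin\lambda^{-1}(P)$ gives $\mathrm{Fg}(a)\cap\lambda^{-1}(P)=\emptyset$. Hence there is a prime $Q\ni a$ disjoint from $\lambda^{-1}(P)$, which by Lemma~\ref{R alternativas} means $Q\times P\subseteq\mathsf{C}$.
Likewise, under (PC5) one has $\lambda=\alpha$, so the printed version is literally Lemma~\ref{separacion} combined with Lemma~\ref{R alternativas}.

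If you keep the Zorn route, repair the primeness step. When $x\vee y\in Q$, distributivity gives $\mathrm{Fg}(Q\cup\{x\})\cap\mathrm{Fg}(Q\cup\{y\})=Q$, so maximality cannot be applied to that intersection. Apply it to each extension separately, obtaining $(p_1,u_1),(p_2,u_2)\notin\mathsf{C}$. Then combine them into $(p_1\wedge p_2,\,u_1\vee u_2)\notin\mathsf{C}$ using Lemma~\ref{propp}(2) and (PC3). The same slip occurs in the paper's proof of Lemma~\ref{propp}(6), where $P\subsetneq P_a\cap P_b$ is asserted.
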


In the context of precontact lattices and their associated relational
structures, closed relations play a central role in the representation
theory. Let us recall how they are defined.

\begin{definition} Let $X$ be a topological space and let $R$ be
a binary relation on $X$. We recall that $R$ a closed relation provided
$R$ is a closed set in the product topology on $X\times X$. We shall
say that $R$ is a point-closed relation, if $R(x)$ is closed subset
of $X$, for each $x\in X$. \end{definition}

In order to define the dual spaces of precontact lattices, we begin
by establishing a characterization of the kind of relations involved
in Priestley spaces. This result will serve as the basis for the definition
that follows.

\begin{lemma}{\label{Caracterizacion de espacio}} Let $\langle X,\leq,\tau\rangle$
be a Priestley space and let $R$ be a binary relation defined on
$X$. The following conditions are equivalent: 
\begin{itemize}
\item[(1)] $R(x)$ is a closed downset of $X$ for each $x\in X$, and $R^{-1}(U)^{c}$
is an open upset, for each $U\in D(X)$. 
\item[(2)] 
\begin{enumerate}
\item[(i)] If $Y\subseteq X$ is closed subset of $X$, then $R(Y)$ is closed
and downset. 
\item[(ii)] If $Z$ is closed downset, then $R^{-1}(Z)$ is closed downset. 
\end{enumerate}
\end{itemize}
\end{lemma}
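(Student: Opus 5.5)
We read the second clause of (1) as saying that $X-R^{-1}(X-U)$ is an open upset for every $U\in D(X)$. Equivalently, $R^{-1}(V)$ is a closed downset for every clopen downset $V$, since such $V$ are exactly the complements $X-U$ with $U\in D(X)$. This reading is what makes (1) a special case of (2)(ii), and we work with it throughout.

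The direction $(2)\Rightarrow(1)$ is then immediate. Singletons are closed in a Priestley space, so (i) with $Y=\{x\}$ shows that $R(x)$ is a closed downset. Each clopen downset $V=X-U$ is a closed downset, so (ii) shows that $R^{-1}(V)$ is a closed downset, i.e.\ its complement is an open upset.

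For $(1)\Rightarrow(2)$ we use the standard Priestley fact that every closed downset $D$ is the intersection of the clopen downsets containing it, and that this family is closed under finite intersections. This follows from total order-disconnectedness together with a compactness argument. Combining it with compactness gives a key observation: if $K$ is closed, $D$ is a closed downset and $K\cap D=\emptyset$, then $K\cap V=\emptyset$ for some clopen downset $V\supseteq D$.

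For (ii), let $Z$ be a closed downset and let $\mathcal{V}$ be the family of clopen downsets containing $Z$. We claim that $R^{-1}(Z)=\bigcap_{V\in\mathcal{V}}R^{-1}(V)$. The inclusion $\subseteq$ is clear. Conversely, if $R(x)\cap Z=\emptyset$, the key observation applied to the closed set $R(x)$ produces some $V\in\mathcal{V}$ with $R(x)\cap V=\emptyset$. Each $R^{-1}(V)$ is a closed downset by (1), so their intersection $R^{-1}(Z)$ is a closed downset.

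For (i), let $Y$ be closed. Then $R(Y)$ is a union of the downsets $R(y)$, so it is a downset, and it remains to show that it is closed. Take $z\notin R(Y)$. For each $y\in Y$ we have $R(y)\cap(z]=\emptyset$, and $(z]$ is a closed downset. By the key observation, applied to $R(y)$ and $(z]$, there is a clopen downset $V_y\ni z$ with $R(y)\cap V_y=\emptyset$, so $y\in X-R^{-1}(V_y)$. These sets are open by (1). Since $Y$ is compact, finitely many of them, say for $V_1,\dots,V_n$, cover $Y$. Put $W=V_1\cap\dots\cap V_n$; this is a clopen neighbourhood of $z$. Every $y\in Y$ has $R(y)\cap V_i=\emptyset$ for some $i$, hence $R(y)\cap W=\emptyset$, so $W\cap R(Y)=\emptyset$. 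Therefore $X-R(Y)$ is open.

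The main obstacle is making sure that the separating sets in both compactness arguments are clopen downsets. That is what allows the hypothesis of (1), which only speaks about clopen downsets, to be applied. In (i) this requires separating with the downset $(z]$ rather than with an upset containing $z$.
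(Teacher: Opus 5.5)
\textbf{The step that fails.} In your proof of (i) you infer $R(y)\cap(z]=\emptyset$ from $z\notin R(y)$. Since $R(y)$ is a \emph{downset}, $z\notin R(y)$ only gives $R(y)\cap[z)=\emptyset$. Points below $z$ may well lie in $R(y)$, so there need not be any clopen downset containing $z$ that misses $R(y)$.

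This cannot be repaired while keeping your rereading of (1), because under that rereading $(1)\Rightarrow(2)(\mathrm{i})$ is false. Take $X=\{\bot\}\cup\{p_n:n\in\mathbb{N}\}\cup\{p_\infty\}$ with the following structure:
\begin{itemize}
\item $\bot<p_n$ for all $n$, and the $p_n$ are pairwise incomparable;
\item $\bot$ is isolated, and $\{p_n:n\in\mathbb{N}\}\cup\{p_\infty\}$ carries the one-point compactification topology of discrete $\mathbb{N}$, with $p_\infty$ the point at infinity.
\end{itemize}
This is a Priestley space. Let $R(p_n)=\{\bot,p_n\}$ for $n\in\mathbb{N}$, and $R(p_\infty)=R(\bot)=\{\bot\}$. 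Every $R(x)$ is a closed downset. Every nonempty downset contains $\bot$, and $\bot\in R(x)$ for all $x$, so $R^{-1}(V)\in\{\emptyset,X\}$ for every downset $V$. Hence your version of (1) holds, and so does (2)(ii) as printed. Yet $Y=X-\{\bot\}$ is closed while $R(Y)=X-\{p_\infty\}$ is not. With $z=p_\infty$ and $y=p_n$ one has $\bot\in R(y)\cap(z]$, which is exactly where your argument breaks.

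\textbf{How the statement should be read.} You were right that the printed statement is inconsistent. On the chain $0<1$ with $R(0)=\{0\}$ and $R(1)=\{0,1\}$, condition (2) holds as printed, but $X-R^{-1}(\{1\})=\{0\}$ is not an upset. The paper, however, resolves the inconsistency the other way:
\begin{itemize}
\item (1) stays as written, with $U\in D(X)$ a clopen \emph{upset};
\item in (ii), $Z$ is a closed \emph{upset}. The paper's proof of (ii) begins ``Let $Z$ be closed and upset'', and its proof of $(2)\Rightarrow(1)$ applies (ii) to $U\in D(X)$.
\end{itemize}
With this reading, (i) works as in the paper. Since $z\notin R(y)$ and $R(y)$ is a closed downset, there is a clopen upset $U_y\ni z$ with $U_y\cap R(y)=\emptyset$. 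Then $y\in X-R^{-1}(U_y)$, which is open by (1). Compactness of $Y$ gives $y_1,\dots,y_n$, and $U_{y_1}\cap\dots\cap U_{y_n}$ is a clopen neighbourhood of $z$ disjoint from $R(Y)$. One must intersect here; the union written in the paper's proof is a slip. Your compactness argument for (ii) and your proof of $(2)\Rightarrow(1)$ then go through verbatim, using clopen upsets containing $Z$ in place of clopen downsets.
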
 
\begin{proof}
$(1)\Rightarrow(2)$ $(i)$ Let $Y\subseteq X$ be closed. Suppose
that $x\notin R(Y)$, i.e, $x\notin R(y)$ for all $y\in Y$. As $R(y)$
is closed downset for all $y\in Y$, for each $y\in Y$ there exists
$U_{y}\in{\rm {D}(X)}$ such that $x\in U_{y}$ and $R(y)\cap U_{y}=\emptyset$.
That is, for each $y\in Y$ there exists $U_{y}\in{\rm {D}}(X)$ such
that $x\in U_{y}$ and $y\in R^{-1}(U_{y})^{c}$. So $Y\subseteq\bigcup\{R^{-1}(U_{y})^{c}:y\in Y\}$.
As $Y$ is closed and $X$ is compact, $Y$ is compact. Thus, there
exist $y_{1},y_{2},\ldots,y_{n}\in Y$ such that 
\[
Y\subseteq R^{-1}(U_{y_{1}})^{c}\cup\ldots\cup R^{-1}(U_{y_{n}})^{c}\subseteq R^{-1}(U_{y_{1}}\cup\ldots\cup Uy_{n})^{c}
\]
Let $U_{x}=U_{y_{1}}\cup\ldots\cup U_{y_{n}}$. Then, $R(Y)\subseteq U^{c}_{x}$
and $x\in U_{x}$. So, $R(Y)=\bigcap\{U^{c}_{x}:x\notin R(Y)\}$.
Therefore, $R(Y)$ is closed and downset.

(ii) Let $Z$ be closed and upset. Let $x\notin R^{-1}(Z)$, i.e.
$Z\cap R(x)=\emptyset$. As $Z$ is a closed and upset, $Z=\bigcap\{U_{i}:Z\subseteq U_{i},U_{i}\in{\rm {D}(X)}\}$.
Therefore, $R(x)\subseteq\bigcup\{U^{c}_{i}:Z\subseteq U_{i},U_{i}\in{\rm {D}(X)}\}$.
As $R(x)$ is closed, there exists a finite subset $\{U_{1},\ldots,U_{n}\}$
such that $R(x)\subseteq U^{c}_{1}\cup\ldots\cup U^{c}_{n}$. Let
$U_{x}=U_{1}\cap\ldots\cap U_{n}$. Then, $R(x)\cap U_{x}=\emptyset$.
So, $x\notin R^{-1}(U_{x})$. As $Z\subseteq U_{i}$ for all $i=1,\ldots,n$,
then $Z\subseteq U_{x}$. Thus, $R^{-1}(Z)\subseteq R^{-1}(U_{x})$
and $x\notin R^{-1}(U_{x})$. In consequence, $R^{-1}(Z)=\bigcap\{R^{-1}(U_{x}):x\notin R^{-1}(Z)\}$
. Hence $R^{-1}(Z)$ is a closed downset.

$(2)\Rightarrow(1)$ As $X$ is Hausdorff, $\{x\}$ is closed. Thus,
$R(\{x\})=R(x)$ is a closed downset. By property $(ii)$, $R^{-1}(U)$
is a closed downset, therefore $R^{-1}(U)^{c}$ is an open upset. 
\end{proof}

We are now in a position to define the dual spaces of precontact lattices.

\begin{definition}\label{espacioprecontacto} A Priestley precontact
space is a pair $\langle\mathbb{X},R\rangle$ such that $\mathbb{X}=\langle X,\leq,\tau\rangle$
is a Priestley space satisfying any of the conditions (1) or (2) of
Lemma \ref{Caracterizacion de espacio}.

\begin{remark} \label{negation spaces} 
\begin{enumerate}
\item Let $\langle X,R\rangle$ be a relational structure. We define 
\[
\neg_{R}(U)\coloneqq R^{-1}(U)^{c},
\]
for each $U\in\mathsf{D}(X)$. Then $\neg_{R}(\emptyset)=X$ and $\neg_{R}(U\cup V)=\neg_{R}(U)\cap\neg_{R}(V)$
for all $U,V\in\mathsf{D}(X)$. When $\neg_{R}(U)\in\mathsf{D}(X)$
for all $U\in\mathsf{D}(X)$, the precontact spaces coincide with
the $\neg$-spaces defined in \cite{Celani6}. 
\item If $\langle X,R\rangle$ is a Priestley precontact space, then $\left(\leq\circ R\right)\subseteq R$.
This follows immediately from the fact that $R^{-1}(z)$ is a downset
for every $z\in X$. Indeed, if $x\leq y$ and $(y,z)\in R$, then
$x\in R^{-1}(z)$, so $(x,z)\in R$. 
\end{enumerate}
\end{remark}

\begin{proposition}\label{precontacto canonico} Let $\mathcal{F}=\langle X,R\rangle$
be a precontact space. Let $\mathsf{C}_{R}$ be a binary relation
on ${D}(X)$ given by: 
\[
\left(U,V\right)\in\mathsf{C}_{R}\text{ iff }\left(U\times V\right)\cap R\neq\emptyset.
\]
Then, 
\[
\mathcal{A}(\mathcal{F})=\langle{\rm {D}(X)},\cup,\cap,\mathsf{C}_{R},\emptyset,X\rangle
\]
is a precontact lattice. \end{proposition}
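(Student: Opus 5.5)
Since $\mathrm{D}(X)$ with $\cup,\cap,\emptyset,X$ is already a bounded distributive lattice (the lattice of clopen upsets of a Priestley space), the plan is simply to verify the three axioms (PC1)--(PC3) for $\mathsf{C}_{R}$ directly from its definition. None of the topological conditions of Lemma \ref{Caracterizacion de espacio} will be needed; they matter for other things, such as $\neg_{R}(U)$ being an open upset, but not for the precontact axioms themselves. The statement is in fact the restriction to $\mathrm{D}(X)$ of the relational precontact relation of Example \ref{contacto-relacional}.

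For (PC1), I would argue by contraposition. If $U=\emptyset$ or $V=\emptyset$, then $U\times V=\emptyset$, so $(U\times V)\cap R=\emptyset$ and $(U,V)\notin\mathsf{C}_{R}$. Since the bottom element of $\mathrm{D}(X)$ is $\emptyset$, this is exactly (PC1).

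For (PC2), I would use the distributivity of the cartesian product over unions:
\[
((U\cup W)\times V)\cap R=\bigl((U\times V)\cap R\bigr)\cup\bigl((W\times V)\cap R\bigr).
\]
The left-hand side is nonempty if and only if one of the two sets on the right is nonempty. That is, $(U\cup W,V)\in\mathsf{C}_{R}$ iff $(U,V)\in\mathsf{C}_{R}$ or $(W,V)\in\mathsf{C}_{R}$. (PC3) follows by the same computation in the second coordinate, using $U\times(V\cup W)=(U\times V)\cup(U\times W)$.

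I do not expect any real obstacle here; the argument is purely set-theoretic. The only point that needs a moment's care is that the lattice operations of $\mathrm{D}(X)$ really are union and intersection, so that the join appearing in (PC2) and (PC3) is the set union used above. This holds because finite unions of clopen upsets are again clopen upsets.
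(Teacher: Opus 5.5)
Your proposal is correct and follows essentially the paper's route: the paper leaves the verification to the reader with a pointer to Example \ref{contacto-relacional}, and your direct set-theoretic check of (PC1)--(PC3) on the sublattice $\mathrm{D}(X)$ is exactly that verification. You are also right that the topological conditions on $R$ play no role here.
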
 
\begin{proof}
It is left to the reader to prove that $\mathsf{C}_{R}$ is a precontact
relation on the lattice ${\rm {D}(X)}$ (see Example \ref{contacto-relacional}). 
\end{proof}

\end{definition}

By Proposition \ref{precontacto canonico}, we conclude that every
precontact space induces a precontact lattice. We now show that every
precontact lattice gives rise to a Priestley precontact space.

\begin{proposition} Let $\langle L,C\rangle$ be a precontact lattice.
Then 
\[
\mathcal{F}(L)=\langle X(L),\subseteq,R_{C},\tau_{X(L)}\rangle
\]
is a Priestley precontact space. \end{proposition}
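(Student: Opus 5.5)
We verify condition (1) of Lemma \ref{Caracterizacion de espacio} for $R_{\mathsf{C}}$ on the Priestley space $\langle \mathrm{X}(L),\subseteq,\tau_{\mathrm{X}(L)}\rangle$. We already know this is a Priestley space and that its clopen upsets are exactly the sets $\beta(a)$ with $a\in L$. Two things therefore remain to be shown. First, for each $P\in\mathrm{X}(L)$ the set $R_{\mathsf{C}}(P)$ is a closed downset. Second, for each $a\in L$ the set $R_{\mathsf{C}}^{-1}(\beta(a))^{c}$ is an open upset.

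For the first claim we use Lemma \ref{R alternativas}, which gives
\[
R_{\mathsf{C}}(P)=\{Q\in\mathrm{X}(L):\alpha^{-1}(P)\cap Q=\emptyset\}=\bigcap_{b\in\alpha^{-1}(P)}\bigl(\mathrm{X}(L)-\beta(b)\bigr).
\]
Each set $\mathrm{X}(L)-\beta(b)$ is a clopen downset, and an intersection of clopen downsets is a closed downset. Equivalently, $R_{\mathsf{C}}(P)$ is the complement of the open upset $\varphi(\alpha^{-1}(P))$, which is available because Lemma \ref{separacion} shows that $\alpha^{-1}(P)$ is an ideal.

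For the second claim we compute $R_{\mathsf{C}}^{-1}(\beta(a))$ in terms of $\lambda$. We will show
\[
R_{\mathsf{C}}^{-1}(\beta(a))=\{P\in \mathrm{X}(L): P\cap\lambda(a)=\emptyset\},
\]
where $\lambda(a)=L\setminus\mathsf{C}(a)$ is an ideal by Lemma \ref{propp}(4). Once this holds, the complement is $\{P: P\cap\lambda(a)\neq\emptyset\}=\varphi(\lambda(a))=\bigcup_{c\in\lambda(a)}\beta(c)$, which is an open upset, as required.

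The inclusion $\subseteq$ is immediate. If $P\times Q\subseteq\mathsf{C}$ and $a\in Q$, then $(c,a)\in\mathsf{C}$ for every $c\in P$, so $P\cap\lambda(a)=\emptyset$.

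The inclusion $\supseteq$ is the main step and needs a prime filter argument. Assume $P\cap\lambda(a)=\emptyset$, so that $(c,a)\in\mathsf{C}$ for all $c\in P$. We want a prime filter $Q\ni a$ with $P\times Q\subseteq\mathsf{C}$. Set $G=\mathrm{Fg}(a)=[a)$. By monotonicity, Lemma \ref{propp}(2), we get $(c,b)\in\mathsf{C}$ for all $c\in P$ and $b\geq a$, so $P\times G\subseteq\mathsf{C}$. Now apply Lemma \ref{propp}(6), or rather its one-sided version, which is exactly the Zorn argument in its proof run in the second coordinate with the first coordinate held fixed at the prime filter $P$. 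This produces a prime filter $Q\supseteq G$ with $P\times Q\subseteq\mathsf{C}$. Then $Q\in\beta(a)$ and $(P,Q)\in R_{\mathsf{C}}$.

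The care needed here is that Lemma \ref{propp}(6) as stated enlarges both filters, whereas we need $P$ to stay unchanged. Since $P$ is already prime, hence maximal among the filters that could be used, we simply run the maximality argument only on the $G$ side. Primeness of the maximal $Q$ then follows verbatim from that proof, using (PC3) in the form of Lemma \ref{propp}(4): the set of $y$ with $(x,y)\notin\mathsf{C}$ is an ideal.

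With both claims established, condition (1) of Lemma \ref{Caracterizacion de espacio} holds, so $\mathcal{F}(L)$ is a Priestley precontact space.
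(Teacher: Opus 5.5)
Your argument is the paper's proof in substance. Both describe $R_{\mathsf{C}}(P)$ as the intersection of the clopen downsets $\beta(b)^{c}$ over $b\in\alpha^{-1}(P)$; the paper's index set $\{a:P\nsubseteq\mathsf{C}^{-1}(a)\}$ is exactly $\alpha^{-1}(P)$. Both also identify $R^{-1}_{\mathsf{C}}(\beta(a))^{c}$ with $\varphi$ of an ideal.

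\textbf{The wrong ideal in the second claim.} In the paper, $\lambda(a)=L\setminus\mathsf{C}(a)=\{c:(a,c)\notin\mathsf{C}\}$. So your step ``$P\cap\lambda(a)=\emptyset$, so that $(c,a)\in\mathsf{C}$ for all $c\in P$'' does not follow. The set you actually use is $\alpha(a)=L\setminus\mathsf{C}^{-1}(a)=\{c:(c,a)\notin\mathsf{C}\}$. As written, $\{P:P\cap\lambda(a)=\emptyset\}$ is, by the mirror-image argument, $R_{\mathsf{C}}(\beta(a))$. That differs from $R^{-1}_{\mathsf{C}}(\beta(a))$ as soon as $\mathsf{C}$ is not symmetric. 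Replace $\lambda$ by $\alpha$ throughout. You then get exactly the identity the paper proves, $R^{-1}_{\mathsf{C}}(\beta(a))^{c}=\varphi(\alpha(a))$, which is an open upset because $\alpha(a)$ is an ideal.

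\textbf{The one-sided Zorn detour is unnecessary.} Its justification is also off: prime filters need not be maximal. More importantly, nothing requires $P$ to stay fixed. Lemma \ref{propp}(6) applied to $P$ and $[a)$ gives primes $P'\supseteq P$ and $Q\ni a$ with $P'\times Q\subseteq\mathsf{C}$, hence $P\times Q\subseteq\mathsf{C}$.

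The paper is more direct still. $P\cap\alpha(a)=\emptyset$ says precisely that $a\notin\alpha^{-1}(P)$. Lemma \ref{separacion} is the prime filter theorem for the ideal $\alpha^{-1}(P)$, which you already invoke in the first claim. It gives $Q\ni a$ with $\alpha^{-1}(P)\cap Q=\emptyset$, i.e.\ $(P,Q)\in R_{\mathsf{C}}$ by Lemma \ref{R alternativas}. Your Zorn construction only reproves this, since a filter $H$ satisfies $P\times H\subseteq\mathsf{C}$ iff $H\cap\alpha^{-1}(P)=\emptyset$.

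This route also spares you from copying the proof of Lemma \ref{propp}(6) verbatim, which is not safe. Its claim $P\subsetneq P_a\cap P_b$ is false, since $P_a\cap P_b=P$ once $a\vee b\in P$. A correct primeness step goes as follows. Pick $p_1,p_2\in P$ and $u,v$ in the two enlarged filters with $(p_1,u),(p_2,v)\notin\mathsf{C}$. Then use the ideal property of $\lambda(p_1\wedge p_2)$, which you cite, to get $(p_1\wedge p_2,u\vee v)\notin\mathsf{C}$, with $u\vee v$ lying in the original maximal filter.
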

\begin{proof}
First, let us prove that for all $P\in{\rm {X}(L)},R_{\mathsf{C}}(P)$
is closed and downset. To prove this, let us see that for all $P\in\mathsf{X}(L)$
\[
R_{\mathsf{C}}(P)=\bigcap\{\beta(a)^{c}:P\nsubseteq\mathsf{C}^{-1}(a)\}.
\]
Let $Q\in R_{\mathsf{C}}(P)$. Suppose that $Q\notin\bigcap\{\beta(a)^{c}:P\nsubseteq\mathsf{C}^{-1}(a)\}$.
Then there exists $a\in L$ such that $a\in Q$ and $P\nsubseteq\mathsf{C}^{-1}(a)$;
that is, there exists $p\in P$ such that $(p,a)\notin\mathsf{C}$.
This is a contradiction, since $(p,a)\in P\times Q$ and $P\times Q\subseteq\mathsf{C}$.

Assume that $Q\in\bigcap\{\beta(a)^{c}:P\nsubseteq\mathsf{C}^{-1}(a)\}$.
If $Q\notin R_{\mathsf{C}}(P)$, there exist $p,q\in L$ such that
$\left(p,q\right)\in P\times Q$ but $\left(p,q\right)\notin\mathsf{C}$.
Then, $p\in P$ and $p\notin\mathsf{C}^{-1}(q)$, i.e., $P\nsubseteq\mathsf{C}^{-1}(q)$.
So, $Q\in\beta(q)^{c},$ which is a contradiction. Thus, $Q\in R_{\mathsf{C}}(P).$

Now we prove that $R^{-1}_{\mathsf{C}}(U)^{c}$ is an open upset of
$\mathrm{X}(L)$, for each $U\in\mathrm{D}(\mathrm{X}(L))$, the set.
To this end, it suffices to show that for each $a\in L$ 
\[
\varphi(L\setminus\mathsf{C}^{-1}(a))=R^{-1}_{\mathsf{C}}(\beta(a))^{c}.
\]
Let $P\in\varphi(L\setminus\mathsf{C}^{-1}(a))$, i.e. , there exists
$p\in P$ such $\left(p,a\right)\notin\mathsf{C}$. If $P\notin R^{-1}_{\mathsf{C}}(\beta(a))^{c}$,
there exists $Q\in\mathrm{X}(L)$ such that $P\times Q\subseteq\mathsf{C}$
and $a\in Q$. This implies that $\left(p,a\right)\in P\times Q\subseteq\mathsf{C}$,
which is a contradiction. For the reverse inclusion let $P\in R^{-1}_{\mathsf{C}}(\beta(a))^{c}$.
Then $R_{\mathsf{C}}(P)\cap\beta(a)=\emptyset.$ If $P\cap\left(L\setminus\mathsf{C}^{-1}(a)\right)=\emptyset$.
By Lemma \ref{separacion}, there exists $Q\in\mathrm{X}(L)$ such
that $P\times Q\subseteq\mathsf{C}$ and $a\in Q$. So, $Q\in R_{\mathsf{C}}(P)\cap\beta(a)$,
which is a contradiction. Therefore, $\varphi(L\setminus\mathsf{C}^{-1}(a))=R^{-1}_{\mathsf{C}}(\beta(a))^{c}$. 
\end{proof}

In line with the notions of precontact homomorphism and strong precontact
homomorphism, we will define two types of morphisms between Priestley
precontact spaces.

Let $f:\langle{X}_{1},R_{1}\rangle\longrightarrow\langle{X}_{2},R_{2}\rangle$
be a Priestley morphism. We consider the following conditions: 
\begin{itemize}
\item[$\mathrm{(SP1)}$] If $(x,y)\in R_{1}$, then $(f(x),f(y))\in R_{2}$. 
\item[$\mathrm{\mathrm{(SP2)}}$] If $(f(x),y)\in R_{2}$, then there exists $z\in X_{1}$ such that
$(x,z)\in R_{1}$ and $y\leq f(z)$. 
\item[$\mathrm{(SP3)}$] If $(x,f(y))\in R_{2}$, then there exists $z\in X_{1}$ such that
$(z,y)\in R_{1}$ and $x\leq f(z)$. 
\end{itemize}
\begin{definition} We say that a Priestley morphism $f:\langle{X}_{1},R_{1}\rangle\longrightarrow\langle{X}_{2},R_{2}\rangle$
is stable if it satisfies $\mathrm{(SP1)}$. We say that $f$ is a
strong stable morphism if it satisfies $\mathrm{(SP1)}$ and $\mathrm{(SP2)}$.
\end{definition}

The condition $\mathrm{(SP1)}$ ensures that the relation $R_{1}$
is preserved under the morphism $f$. Conditions $\mathrm{(SP2)}$
and $\mathrm{(SP3)}$ are dual to each other, reflecting complementary
aspects of stability. This duality mirrors the behavior of conditions
$\mathrm{\mathrm{(CH2)}}$ and $\mathrm{\mathrm{(CH3)}}$ in precontact
homomorphisms.

It is easy to see that the composition of stable morphisms (or strong
stable morphisms) is a stable morphism (strong stable morphism). So,
we consider two categories:
\begin{center}
\begin{tabular}{|c|c|}
\hline 
Category  & Details \tabularnewline
\hline 
PS  & Objects: Precontact Spaces \tabularnewline
 & Morphisms: Stable Priestley Morphisms \tabularnewline
\hline 
SPS  & Objects: Precontact Spaces \tabularnewline
 & Morphisms: Strong Stable Morphisms \tabularnewline
\hline 
\end{tabular}
\par\end{center}

%%%%%%%%%%%%%%%%%%%%%%%%%%%%%%%%%%%

The celebrated Priestley duality states that the category of bounded
distributive lattices and lattice homomorphisms is dually equivalent
to the category of Priestley spaces and continuous order-preserving
maps. We extend the Priestley duality to the categories PS and SPS.

\begin{lemma} Let $\langle{L}_{1},\mathsf{C}_{1}\rangle,\langle{L}_{2},\mathsf{C}_{2}\rangle$
be a precontact lattice and let $h:{L}_{1}\longrightarrow{L}_{2}$
be a lattice homomorphism. Consider the map 
\[
h_{*}:{\rm {X}}(L_{2})\longrightarrow{\rm {X}}(L_{1})
\]
defined by $h_{*}(P)=h^{-1}(P)$, for each $P\in{\rm {X}}(L_{2})$.
Then $h_{*}$ is a Priestley morphism, i.e. continuos and order preserving,
such that: 
\begin{enumerate}
\item $h$ is a precontact homomorphism iff $h_{*}$ is a stable morphism. 
\item $h$ is a strong precontact homomorphism iff $h_{*}$ is a strong
stable morphism. 
\end{enumerate}
\end{lemma}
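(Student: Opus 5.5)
That $h_{*}$ is a Priestley morphism is classical Priestley duality, so we take it for granted. Both equivalences then reduce to translating each algebraic condition into prime filters and back. Throughout, $R_i$ denotes $R_{\mathsf{C}_i}$.

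\textbf{(1).} Suppose first that $h$ satisfies (CH1) and let $(P,Q)\in R_2$, i.e.\ $P\times Q\subseteq\mathsf{C}_2$. For $a\in h^{-1}(P)$ and $b\in h^{-1}(Q)$ we have $(h(a),h(b))\in P\times Q\subseteq\mathsf{C}_2$. By (CH1) this gives $(a,b)\in\mathsf{C}_1$, so $(h_{*}(P),h_{*}(Q))\in R_1$.

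Conversely, suppose $h_{*}$ satisfies (SP1) and $(h(a),h(b))\in\mathsf{C}_2$. Lemma~\ref{propp}(7) yields prime filters $P\ni h(a)$ and $Q\ni h(b)$ of $L_2$ with $P\times Q\subseteq\mathsf{C}_2$. By (SP1), $h^{-1}(P)\times h^{-1}(Q)\subseteq\mathsf{C}_1$. Since $a\in h^{-1}(P)$ and $b\in h^{-1}(Q)$, we get $(a,b)\in\mathsf{C}_1$.

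\textbf{(2), (CH2) $\Rightarrow$ (SP2).} Given (1), it remains to show that (CH2) is equivalent to (SP2). Assume (CH2) and $(h^{-1}(P),Q)\in R_1$ with $Q\in\mathrm{X}(L_1)$. We need $Q'\in\mathrm{X}(L_2)$ with $P\times Q'\subseteq\mathsf{C}_2$ and $Q\subseteq h^{-1}(Q')$. Take the filter $G=\mathrm{Fg}(h[Q])$ and the ideal $\alpha_2^{-1}(P)$ of Lemma~\ref{separacion}, computed in $L_2$. The claim is that $G\cap\alpha_2^{-1}(P)=\emptyset$.

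Suppose not. Then there are $q\in Q$ and $p\in P$ with $(p,h(q))\notin\mathsf{C}_2$, using that $Q$ is closed under meets and that the second coordinate is monotone by Lemma~\ref{propp}(2). By (CH2) there is $b\in L_1$ with $(b,q)\notin\mathsf{C}_1$ and $p\le h(b)$. Hence $b\in h^{-1}(P)$ and $q\in Q$ with $(b,q)\notin\mathsf{C}_1$, contradicting $h^{-1}(P)\times Q\subseteq\mathsf{C}_1$.

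The prime filter theorem now gives $Q'\supseteq G$ disjoint from $\alpha_2^{-1}(P)$. By Lemma~\ref{R alternativas} this means $(P,Q')\in R_2$, and clearly $Q\subseteq h^{-1}(Q')$.

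\textbf{(2), (SP2) $\Rightarrow$ (CH2).} Assume (SP2) and $(a,h(x))\notin\mathsf{C}_2$. Consider the ideal $J=\{b\in L_1:(b,x)\notin\mathsf{C}_1\}$, which is an ideal by Lemma~\ref{propp}(4), and the ideal $\mathrm{Ig}(h[J])$ in $L_2$. We want $a\in\mathrm{Ig}(h[J])$, i.e.\ $a\le h(b_1)\vee\dots\vee h(b_n)=h(b_1\vee\dots\vee b_n)$ with $b_1\vee\dots\vee b_n\in J$. That yields the required $b$.

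Suppose instead that $a\notin\mathrm{Ig}(h[J])$, and take a prime filter $P\ni a$ of $L_2$ disjoint from $h[J]$. Then $h^{-1}(P)\cap J=\emptyset$, so every $p\in h^{-1}(P)$ satisfies $(p,x)\in\mathsf{C}_1$. Thus $h^{-1}(P)\times\mathrm{Fg}(x)\subseteq\mathsf{C}_1$, using Lemma~\ref{propp}(2). The step I expect to be the main obstacle is upgrading this to a prime filter on the right while keeping the left coordinate fixed. For this, apply the Zorn argument of Lemma~\ref{propp}(6) only to the second coordinate: its primeness argument works with the first coordinate held fixed. This gives a prime $Q\ni x$ with $(h^{-1}(P),Q)\in R_1$.

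By (SP2) there is $Q'$ with $(P,Q')\in R_2$ and $Q\subseteq h^{-1}(Q')$. Then $a\in P$ and $h(x)\in Q'$, so $(a,h(x))\in\mathsf{C}_2$, a contradiction. This completes (2).
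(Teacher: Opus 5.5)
Your proof is correct. Part (1) and the implication (CH2)$\Rightarrow$(SP2) follow the paper. The only cosmetic difference in the latter is how the prime filter is produced. The paper shows $P\times\mathrm{Fg}(h[Q])\subseteq\mathsf{C}_2$, applies Lemma~\ref{propp}(6) to get primes $F\supseteq P$ and $D\supseteq\mathrm{Fg}(h[Q])$ with $(F,D)\in R_2$, and then shrinks $F$ back to $P$. You instead separate $\mathrm{Fg}(h[Q])$ from the ideal $\alpha_2^{-1}(P)$ and read off $(P,Q')\in R_2$ from Lemma~\ref{R alternativas}.

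The genuine difference is in (SP2)$\Rightarrow$(CH2), where you reverse the order of construction. The paper first obtains $h^{-1}(\mathrm{Fg}(a))\times\mathrm{Fg}(x)\subseteq\mathsf{C}_1$ and uses Lemma~\ref{propp}(6) to produce primes $F,Q$ of $L_1$. It then asserts, without proof, that there is a prime $P\ni a$ of $L_2$ with $h^{-1}(P)\subseteq F$. That lifting fact itself requires separating $\mathrm{Fg}(a)$ from $\mathrm{Ig}(h[L_1\setminus F])$. You build $P$ directly in $L_2$ by separating $a$ from $\mathrm{Ig}(h[\alpha_1(x)])$, whose failure to contain $a$ is exactly the failure of (CH2) at $(a,x)$. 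Only afterwards do you extend on the $L_1$ side. This is shorter and avoids the unproved lifting step.

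The step you flag as the main obstacle is not really one. Since $h$ is a bounded lattice homomorphism, $h^{-1}(P)$ is already a prime filter of $L_1$. The condition $h^{-1}(P)\cap\alpha_1(x)=\emptyset$ says precisely that $x\notin\alpha_1^{-1}(h^{-1}(P))$. So Lemma~\ref{separacion}, together with Lemma~\ref{R alternativas}, gives the prime $Q\ni x$ with $(h^{-1}(P),Q)\in R_1$ immediately.

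Alternatively, apply Lemma~\ref{propp}(6) to both coordinates and discard the enlarged left filter. If $h^{-1}(P)\subseteq P''$ and $P''\times Q\subseteq\mathsf{C}_1$, then already $h^{-1}(P)\times Q\subseteq\mathsf{C}_1$.

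If you keep the one-coordinate Zorn argument, make the primeness step explicit via (PC3). Maximality gives witnesses $(p_1,y_1),(p_2,y_2)\notin\mathsf{C}_1$ with $p_i\in h^{-1}(P)$, $y_1\in\mathrm{Fg}(G\cup\{c\})$ and $y_2\in\mathrm{Fg}(G\cup\{d\})$. Pass to $p_1\wedge p_2$ and to $y_1\vee y_2$, which lies in $G$. Do not cite the written proof of Lemma~\ref{propp}(6) as is. It never invokes (PC2) or (PC3), and its claim $P\subsetneq P_a\cap P_b$ is false: by distributivity, $P_a\cap P_b=P$ whenever $a\vee b\in P$.
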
 
\begin{proof}
$(1)$ Assume that $h$ satisfies condition $\mathrm{\mathrm{(CH1)}}$.
Let $(P,Q)\in R_{2}$. We aim to prove that $(h_{*}(P),h_{*}(Q))\in R_{1}$,
i.e. , $h^{-1}(P)\times h^{-1}(Q)\subseteq C_{1}$. Consider $(p,q)\in h^{-1}(P)\times h^{-1}(Q)$.
Then, $(h(p),h(q))\in P\times Q$. Since $P\times Q\subseteq\mathsf{C}_{2}$
by hypothesis, it follows that $(h(p),h(q))\in C_{2}$. By the stability
of $h$, we conclude that $(p,q)\in C_{2}$.

Assume that $h_{*}$ satisfies $\mathrm{(SP1)}$. Let $(h(a),h(b))\in\mathsf{C}_{2}$.
Then, by $(7)$ of Lemma \ref{propp}, there exists $P,Q\in{\rm {X}}(L_{2})$
such that $h(a)\in P,h(b)\in Q$ and $(P,Q)\in R_{2}$. As $(P,Q)\in R_{2}$
and $h_{*}$ is stable, $(h^{-1}(P),h^{-1}(Q))\in R_{1}$. Thus, $(a,b)\in h^{-1}(P)\times h^{-1}(Q)\subseteq C_{1}$.
So, $(a,b)\in C_{1}$.

$(2)$ We assume that $h\colon\langle L_{1},\mathsf{C}_{1}\rangle\longrightarrow\langle L_{2},\mathsf{C}_{2}\rangle$
satisfies conditions $\mathrm{\mathrm{(CH1)}}$ and $\mathrm{\mathrm{\mathrm{\mathrm{(CH2)}}}}$.
By $(1)$, $h_{*}$ satisfies $\mathrm{(SP1)}$. Therefore, it only
remains to prove that $h_{*}$ satisfies $\mathrm{(SP2)}$. Consider
$P\in\mathrm{X}(L_{2})$ and $Q\in\mathrm{X}(L_{1})$ such that $(h_{*}(P),Q)\in R_{1}$,
that is, $h^{-1}(P)\times Q\subseteq\mathsf{C}_{1}$.

We prove that $P\times\mathrm{Fg}(h(Q))\subseteq\mathsf{C}_{2}.$
Assume that there exist $p\in P$ and $x\in Q$ such that $(p,h(x))\notin\mathsf{C}_{2}$.
Then, by condition $\mathrm{(CH2)}$, there exists $b\in L_{1}$ such
that $(b,x)\notin\mathsf{C}_{1}$ and $p\leq h(b)$. However, this
leads to a contradiction, because $p\leq h(b)$ and $p\in P$ imply
that $b\in h^{-1}(P)$. Thus, $(b,x)\in h^{-1}(P)\times Q\subseteq\mathsf{C}_{1}$,
i.e. , $(b,x)\in\mathsf{C}_{1}$, which is a contradiction. Thus,
$P\times\mathrm{Fg}(h(Q))\subseteq\mathsf{C}_{2}.$ By item $(6)$
of Lemma \ref{propp}, there exist $D,F\in\mathrm{X}(L_{2})$ such
that $\mathrm{Fg}(h(Q))\subseteq D$, $P\subseteq F$, and $(F,D)\in R_{2}$.
This implies that $(P,D)\in R_{2}$. Therefore, there exists $D\in\mathrm{X}(L_{2})$
such that $(P,D)\in R_{2}$ and $Q\subseteq h^{-1}(D)$.

Now let us prove that if $h_{*}$ is a strong stable morphism, then
$h$ is a strong precontact homomorphism. By the previous theorem,
and since $h_{*}$ is a stable map, $h$ is a precontact homomorphism.
Therefore, it remains to show that $h$ satisfies $\mathrm{(CH2)}$.
Assume that $(h_{*}(P),Q)\in R_{1}$ implies that there exists $D\in{\mathrm{X}}(L_{2})$
such that $(P,D)\in R_{2}$ and $Q\subseteq h^{-1}(D)$. Let $(a,h(x))\notin\mathsf{C}_{2}$.
Suppose that for all $b\in L_{1}$, if $a\leq h(b)$ then $(b,x)\in\mathsf{C}_{1}$.
Hence, $h^{-1}(\mathrm{Fg}(a))\times\mathrm{Fg}(x)\subseteq\mathsf{C}_{1}$.
Thus, by $(6)$ of Lemma \ref{propp}, there exist $F,Q\in{\mathrm{X}}(L_{1})$
such that $h^{-1}(\mathrm{Fg}(a))\subseteq F$, $[x)\subseteq Q$,
and $(F,Q)\in R_{1}$. Since $h^{-1}(\mathrm{Fg}(a))\subseteq F$,
there exists $P\in{\mathrm{X}}(L_{1})$ such that $h^{-1}(P)\subseteq F$
and $\mathrm{Fg}(a)\subseteq P$. As $h^{-1}(P)\subseteq F$ and $(F,Q)\in R_{1}$,
it follows that $(h^{-1}(P),Q)\in R_{1}$. Then, applying the hypothesis,
there exists $D\in{\rm {X}}(L_{2})$ such that $(P,D)\in R_{2}$ and
$Q\subseteq h^{-1}(D)$. Consequently, $(a,h(x))\in\mathsf{C}_{2}$,
which is a contradiction. Therefore, there exists $b\in L_{1}$ such
that $(b,x)\notin\mathsf{C}_{1}$ and $a\leq h(b)$. 
\end{proof}

Based on the following result, every distributive precontact lattice
can be represented as a distributive precontact lattice of sets. This
theorem extends the representation result established for precontact
algebras in \cite{Duntsch2}.

\begin{theorem}[Representation] Let $\langle L,\mathsf{C}\rangle$
be a precontact lattice. Then the map $\beta:\langle L,\mathsf{C}\rangle\longrightarrow\langle\beta(L),\mathsf{C}_{R_{\mathsf{C}}}\rangle$
is a precontact isomorphism, i.e., $\beta$ is a lattice isomorphism
such that 
\[
(a,b)\in\mathsf{C}\text{ iff }\left(\beta(a),\beta(b)\right)\in\mathsf{C}_{R_{\mathsf{C}}},
\]
for all $a,b\in L$. \end{theorem}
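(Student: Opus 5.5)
The plan is to use that $\beta$ is already known, by Priestley duality (see the Preliminaries), to be a lattice isomorphism of $L$ onto $\beta[L]=\mathrm{D}(\mathrm{X}(L))$. So the only work is the equivalence
\[
(a,b)\in\mathsf{C}\iff(\beta(a)\times\beta(b))\cap R_{\mathsf{C}}\neq\emptyset ,
\]
where $\mathsf{C}_{R_{\mathsf{C}}}$ is the relation of Proposition \ref{precontacto canonico} on the clopen upsets of the precontact space $\mathcal{F}(L)$.

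For the forward direction, assume $(a,b)\in\mathsf{C}$. By item (7) of Lemma \ref{propp}, there are prime filters $P,Q$ with $a\in P$, $b\in Q$ and $P\times Q\subseteq\mathsf{C}$. That last condition says exactly $(P,Q)\in R_{\mathsf{C}}$. Since also $P\in\beta(a)$ and $Q\in\beta(b)$, the pair $(P,Q)$ witnesses $(\beta(a),\beta(b))\in\mathsf{C}_{R_{\mathsf{C}}}$. If one prefers not to rely on (7), it follows from (5) and (6): by (5), $[a)\times[b)\subseteq\mathsf{C}$, and then (6) applied to the filters $[a)$ and $[b)$ gives the required prime filters.

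For the converse, assume there is $(P,Q)\in R_{\mathsf{C}}$ with $P\in\beta(a)$ and $Q\in\beta(b)$. Then $a\in P$ and $b\in Q$, so $(a,b)\in P\times Q\subseteq\mathsf{C}$.

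The only step with real content is producing a pair of \emph{prime} filters in relation from a single contact pair $(a,b)$. That Zorn's lemma argument is exactly item (6) of Lemma \ref{propp}, which has already been proved. With it in hand, no substantial obstacle remains, and the proof just combines these two directions with the lattice isomorphism $\beta$.
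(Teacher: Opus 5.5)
Your proposal is correct and matches the paper's proof: $\beta$ is a lattice isomorphism by Priestley duality. The forward direction uses item (7) of Lemma~\ref{propp} to get prime filters $P\ni a$ and $Q\ni b$ with $(P,Q)\in R_{\mathsf{C}}$, and the converse is immediate from $(a,b)\in P\times Q\subseteq\mathsf{C}$ (your derivation of (7) from (5) and (6) is also valid).
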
 
\begin{proof}
By the representation theorem for bounded distributive lattices, we
know that $\beta:L\to\beta(L)$ is a lattice isomorphism. Therefore,
it suffices to show that $(a,b)\in\mathsf{C}$ if and only if $\beta(a)\,\mathsf{C}_{R_{\mathsf{C}}}\,\beta(b)$,
for all $a,b\in L$.

Let $a,b\in L$ be such that $(a,b)\in\mathsf{C}$. By item (7) of
Lemma \ref{propp}, there are $P,Q\in\mathrm{X}(L)$ such that $a\in P$,
$b\in Q$, and $(P,Q)\in R_{\mathsf{C}}$. Consequently, $(\beta(a)\times\beta(b))\cap R_{\mathsf{C}}\neq\varnothing$,
which means that $(\beta(a),\beta(b))\in\mathsf{C}_{R_{\mathsf{C}}}$.

Conversely, suppose that $(\beta(a),\beta(b))\in\mathsf{C}_{R_{\mathsf{C}}}$.
Then $(\beta(a)\times\beta(b))\cap R_{\mathsf{C}}\neq\varnothing$,
so there exist $P,Q\in\mathrm{X}(L)$ such that $a\in P$, $b\in Q$,
and $(P,Q)\in R_{\mathsf{C}}$. By the definition of $R_{\mathsf{C}}$
and by the fact that $(a,b)\in P\times Q$, we have $(a,b)\in\mathsf{C}$. 
\end{proof}

Let $\langle X,\leq,\tau\rangle$ be a Priestley space. Consider the
map $\varepsilon_{X}:X\longrightarrow X({\rm {D}(X)})$ defined by
$\varepsilon_{X}(x)=\{U\in{\rm {D}(X)}:x\in U\}$. It follows from
Priestley duality that $\varepsilon_{X}$ is an order-isomorphism
and a homeomorphism.

\begin{lemma} \label{caract R}

Let $\langle X,R\rangle$ be a pair such that $X$ is a Priestley
space, $R$ is a binary relation defined on $X$ and $R^{-1}(U)^{c}$
is open upset of $X$, for each $U\in{\rm {D}}(X)$. Then the following
conditions are equivalent: 
\begin{enumerate}
\item For all $x\in X:R(x)$ is a closed downset. 
\item For all $x,y\in X$, if $(\varepsilon(x),\varepsilon(y))\in R_{\mathsf{C}_{R}}$,
then $(x,y)\in R$. 
\end{enumerate}
\end{lemma}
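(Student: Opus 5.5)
The key is to unfold what $(\varepsilon(x),\varepsilon(y))\in R_{\mathsf{C}_R}$ means. Since $\varepsilon(x)=\{U\in\mathrm{D}(X):x\in U\}$, the condition $\varepsilon(x)\times\varepsilon(y)\subseteq\mathsf{C}_R$ says: for all $U,V\in\mathrm{D}(X)$ with $x\in U$ and $y\in V$, there is a pair $(u,v)\in R$ with $u\in U$ and $v\in V$. Note also that $\mathsf{C}_R$ is a precontact relation on $\mathrm{D}(X)$ by the argument of Example \ref{contacto-relacional}, since the hypothesis on $R^{-1}(U)^c$ is not needed for axioms (PC1)--(PC3). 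So $R_{\mathsf{C}_R}$ makes sense, and the whole proof works directly with this unfolded form.

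For $(1)\Rightarrow(2)$, argue by contraposition. Suppose $(x,y)\notin R$, that is, $y\notin R(x)$. Because $R(x)$ is a closed downset in the Priestley space $X$, standard Priestley separation gives a clopen upset $V$ with $y\in V$ and $V\cap R(x)=\emptyset$. (A closed downset is the intersection of the complements of the clopen upsets disjoint from it; this is the same fact used in Lemma \ref{Caracterizacion de espacio}.) Then $x\notin R^{-1}(V)$, so $x\in R^{-1}(V)^c$, which by hypothesis is an open upset. Write $R^{-1}(V)^c=\bigcup\{U\in\mathrm{D}(X):U\subseteq R^{-1}(V)^c\}$; this holds because open upsets of a Priestley space are unions of clopen upsets. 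Hence there is $U\in\mathrm{D}(X)$ with $x\in U$ and $U\cap R^{-1}(V)=\emptyset$, which means $(U\times V)\cap R=\emptyset$, i.e.\ $(U,V)\notin\mathsf{C}_R$. Since $U\in\varepsilon(x)$ and $V\in\varepsilon(y)$, we get $(\varepsilon(x),\varepsilon(y))\notin R_{\mathsf{C}_R}$.

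For $(2)\Rightarrow(1)$, fix $x$ and show that $R(x)$ equals the closed downset
\[
K_x=\bigcap\{V^c: V\in\mathrm{D}(X),\ x\notin R^{-1}(V)\}.
\]
The inclusion $R(x)\subseteq K_x$ is immediate. For the reverse, let $y\in K_x$. To apply (2), we check that $(\varepsilon(x),\varepsilon(y))\in R_{\mathsf{C}_R}$. Take $U\ni x$ and $V\ni y$ in $\mathrm{D}(X)$. Since $y\in K_x$ and $y\in V$, we must have $x\in R^{-1}(V)$, so some $v\in V$ satisfies $(x,v)\in R$. Then $(x,v)\in(U\times V)\cap R$, so $(U,V)\in\mathsf{C}_R$. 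By (2), $(x,y)\in R$. Thus $R(x)=K_x$ is an intersection of clopen downsets, hence a closed downset.

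The main technical point is the separation step in $(1)\Rightarrow(2)$. It needs two standard Priestley facts: that a point outside a closed downset is separated from it by a clopen upset, and that an open upset is a union of clopen upsets. The latter follows from compactness together with total order-disconnectedness, and it is exactly where the hypothesis that $R^{-1}(U)^c$ is an open upset gets used.
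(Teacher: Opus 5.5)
Your proof is correct. Your $(1)\Rightarrow(2)$ is essentially identical to the paper's argument. You separate $y$ from the closed downset $R(x)$ by a clopen upset $V$. You then use that $R^{-1}(V)^{c}$ is an open upset to find a clopen upset $U$ with $x\in U\subseteq R^{-1}(V)^{c}$.

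For $(2)\Rightarrow(1)$ the routes differ. The paper argues by contradiction. It takes $y\in\mathrm{cl}(R(x))\setminus R(x)$ and uses (2) to get $U\ni x$, $V\ni y$ with $(U,V)\notin\mathsf{C}_R$. It then notes that the open set $V$ must meet $R(x)$, a contradiction.

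This shows only that $R(x)$ is closed. The downset half of (1) is never addressed in the paper's proof. It does follow from (2), since $\varepsilon$ is order-preserving and $R_{\mathsf{C}_R}(\varepsilon(x))$ is a downset, but the paper does not say so.

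Your identification of $R(x)$ with an explicit intersection $K_x$ of clopen downsets gives closedness and downward closure at once, so your argument is slightly more complete. It is also the exact analogue of the formula $R_{\mathsf{C}}(P)=\bigcap\{\beta(a)^{c}:P\nsubseteq\mathsf{C}^{-1}(a)\}$ that the paper uses to show the canonical structure $\mathcal{F}(L)$ is a precontact space. As you observe, both arguments use the open-upset hypothesis only in $(1)\Rightarrow(2)$.
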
 
\begin{proof}
$(1)\Rightarrow(2)$ Assume that for all $x\in X$, $R(x)$ is closed
downset. Let $x,y\in X$. Suppose that $(x,y)\notin R$, then since
$R$ is closed downset, there exists $V\in\mathsf{D}(X)$ such that
$y\in V$ and $R(x)\cap V=\emptyset$. In other words, there exists
$V\in\mathsf{D}(X)$ such that $y\in V$ and $x\in R^{-1}(V)^{c}$.
Since $x\in R^{-1}(V)^{c}$ and $R^{-1}(V)^{c}$ is open, there exists
$U\in\mathsf{D}(X)$ such that $x\in U$ and $U\subseteq R^{-1}(V)^{c}$.
Thus, $(U,V)\notin\mathsf{C}_{R}$, which implies that $(\varepsilon(x),\varepsilon(y))\notin R_{\mathsf{C}_{R}}$.

$(2)\Rightarrow(1)$ Suppose that for all $x,y\in X$, if $(\varepsilon(x),\varepsilon(y))\in R_{\mathsf{C}_{R}}$,
then $(x,y)\in R$. Let $y\in\text{cl}(R(x))$, but $y\notin R(x)$.
By hypothesis, we have that $(\varepsilon(x),\varepsilon(y))\notin R_{\mathsf{C}_{R}}$,
i.e. , $\varepsilon(x)\times\varepsilon(y)\nsubseteq\mathsf{C}_{R}$.
Then there exist $U,V\in\mathsf{D}(X)$ such that $x\in U$, $y\in V$,
and $(U,V)\notin\mathsf{C}_{R}$. This implies that $x\in R^{-1}(V)^{c}$,
because $x\in U\subseteq R^{-1}(V)^{c}$. Since $y\in\text{cl}(R(x))$,
we have that $R(x)\cap V\neq\emptyset$, which is a contradiction. 
\end{proof}

\begin{corollary} Let $\langle X,R\rangle$ be a Priestley precontact
space. Then 

\begin{center}$(x,y)\in R\Longleftrightarrow(\varepsilon_{X}(x),\varepsilon_{X}(y))\in R_{\mathsf{C}_{R}}$ 

\end{center}

Thus, $\varepsilon_{X}:X\longrightarrow X({\rm {D}}(X))$ is a strong
stable morphism. \end{corollary}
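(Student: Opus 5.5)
The equivalence splits into two implications. The direction ($\Leftarrow$) is exactly Lemma \ref{caract R}, $(1)\Rightarrow(2)$. Indeed, in a Priestley precontact space each $R(x)$ is a closed downset and each $R^{-1}(U)^{c}$ is an open upset for $U\in\mathrm{D}(X)$, so the hypotheses of that lemma hold.

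For ($\Rightarrow$), take $(x,y)\in R$ and $U\in\varepsilon_X(x)$, $V\in\varepsilon_X(y)$. Then $(x,y)\in(U\times V)\cap R$, so $(U,V)\in\mathsf{C}_{R}$. Hence $\varepsilon_X(x)\times\varepsilon_X(y)\subseteq\mathsf{C}_R$, which is precisely $(\varepsilon_X(x),\varepsilon_X(y))\in R_{\mathsf{C}_R}$.

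For the morphism claim, recall from Priestley duality that $\varepsilon_X$ is a continuous order-isomorphism (indeed a homeomorphism), so it is a Priestley morphism. (SP1) is the forward implication just proved.

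For (SP2), suppose $(\varepsilon_X(x),Q)\in R_{\mathsf{C}_R}$ with $Q\in X(\mathrm{D}(X))$. Since $\varepsilon_X$ is surjective, $Q=\varepsilon_X(z)$ for some $z\in X$. The backward implication then gives $(x,z)\in R$, and trivially $Q\leq\varepsilon_X(z)$, so $z$ witnesses (SP2).

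The only point needing care is this use of surjectivity of $\varepsilon_X$, which comes from Priestley duality. Without it, (SP2) would require a compactness argument, so I would make sure to cite that $\varepsilon_X$ is a bijection. Everything else is immediate from Lemma \ref{caract R} and the definitions of $\mathsf{C}_R$ and $R_{\mathsf{C}}$.
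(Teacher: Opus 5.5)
Your proof is correct and follows the paper's own argument: the forward implication comes straight from the definitions of $\mathsf{C}_R$ and $R_{\mathsf{C}_R}$, and the converse is Lemma \ref{caract R}, $(1)\Rightarrow(2)$. As in the paper, (SP2) follows from the surjectivity of $\varepsilon_X$; you have simply written out the details that the paper leaves to the reader.
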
 
\begin{proof}
We will omit the details of the proof, since all claims follow directly
from the definitions and previously established results. For the equivalence
$(x,y)\in R\text{ iff }(\varepsilon_{X}(x),\varepsilon_{X}(y))\in R_{\mathsf{C}_{R}}$,
one direction is immediate. The converse follows from Lemma \ref{caract R}
and the assumption that $(X,R)$ is a Priestley precontact space.
Finally, condition $\mathrm{(SP2)}$, which guarantees that $\varepsilon_{X}$
is a strong stable morphism, is a consequence of the surjectivity
of $\varepsilon_{X}$. 
\end{proof}

We define a functor $S:\textbf{PreDL}\longrightarrow\textbf{PS}$
as follows. For $L\in\mathbf{PreDL}$, $S(L)=\langle\text{X}(L),R_{\mathsf{C}}\rangle$.
If $h$ is a precontact homomorphism, then $S(h)=h_{*}$ is a stable
morphism. It is straightforward to see that $S$ is a well-defined
contravariant functor.

Next, we define the functor $H:\textbf{PS}\longrightarrow\textbf{PreDL}$
as follows. If $\langle X,R\rangle$ is a precontact space, then $H(X)=\langle\text{D}(X),\mathsf{C}_{R}\rangle$
is a precontact lattice, and if $f$ is a stable morphism, then $H(f)=f^{*}$
is a precontact homomorphism. Moreover, as for each $L\in\mathbf{PreDL}$,
the map $\beta_{L}:L\longrightarrow\text{D}(\text{X}(L))$ is an isomorphism
in the category $\textbf{PreDL}$, and since $(h_{*})^{*}(\beta_{L}(a))=\beta_{L}(h(a))$
for each $a\in L$, the composition functor $H\circ S:\textbf{PreDL}\longrightarrow\textbf{PreDL}$
is naturally equivalent to the identity functor. The natural equivalence
is given by the isomorphism $\beta_{L}$.

On the other hand, since for each $\langle X,R\rangle\in\mathbf{PS}$,
the map $\varepsilon_{X}:X\longrightarrow X(\text{D}(X))$ is an order-isomorphism,
a homeomorphism, and a stable morphism, it follows that $\varepsilon_{X}$
is an isomorphism in the category $\textbf{PS}$. By Priestley duality,
for each stable morphism $f:X\longrightarrow Y$, we have $(f^{*})_{*}\circ\varepsilon_{X}=\varepsilon_{Y}\circ f$.
Thus, the isomorphism $\varepsilon_{X}$ in the category $\textbf{PS}$
define a natural equivalence from the composition functor $S\circ H:\textbf{PS}\longrightarrow\textbf{PS}$
to the identity functor in $\textbf{PS}$.

Similarly, we define a functor $Q:\textbf{PreDLS}\longrightarrow\textbf{SPS}$
as follows. For $L\in\mathbf{PreDL}$, $Q(L)=\langle\text{X}(L),R_{\mathsf{C}}\rangle$,
and if $h$ is a strong precontact homomorphism, then $Q(h)=h_{*}$
is a strong stable morphism. It is straightforward to see that $Q$
is a well-defined contravariant functor.

We also define a functor $J:\textbf{SPS}\longrightarrow\textbf{PreDLS}$
as follows. If $\langle X,R\rangle\in\mathbf{SPS}$, then $J(X)=\langle\text{D}(X),\mathsf{C}_{R}\rangle\in\mathbf{PreDLS}$,
and if $f$ is a strong stable morphism, then $J(f)=f^{*}$ is a strong
precontact homomorphism.

By reasoning similarly, we can see that for each $L\in\mathbf{PreDL}$,
the map $\beta:L\longrightarrow\text{D}(\text{X}(L))$ is an isomorphism
in the category $\textbf{PreDLS}$. Since $(h_{*})^{*}(\beta_{L}(a))=\beta_{L}(h(a))$
for each $a\in L$, the composition functor $J\circ Q:\textbf{PreDLS}\longrightarrow\textbf{PreDLS}$
is naturally equivalent to the identity functor, with the natural
equivalence given by the isomorphisms $\beta_{L}$.

On the other hand, since each $\langle X,R\rangle\in\textbf{SPS}$,
the map $\varepsilon_{X}:X\longrightarrow X({\rm {D}(X))}$ is an
order-isomorphism, an homeomorphism, and a strong stable morphism,
it follows that $\varepsilon_{X}$ is an isomorphism in the category
SPS. By Priestley duality, for each strong stable morphism $f:X\longrightarrow Y$
we have that $(f^{*})_{*}\circ\varepsilon_{X}=\varepsilon_{Y}\circ f$.
Thus, the isomorphisms $\varepsilon_{X}$ of the category $\textbf{SPS}$
define a natural equivalence from the composition functor $Q\circ J:\textbf{SPS}\longrightarrow\textbf{SPS}$
to the identity in $\textbf{SPS}$. Thus, we have the following theorem.

\begin{theorem} The categories $\mathbf{PreDL}(\mathbf{PreDLS})$
and $\mathbf{PS}(\mathbf{SPS})$ are dually equivalent. \end{theorem}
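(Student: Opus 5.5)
The plan is to assemble the functors $S,H$ (for $\mathbf{PreDL}$/$\mathbf{PS}$) and $Q,J$ (for $\mathbf{PreDLS}$/$\mathbf{SPS}$) and check the standard requirements for a dual equivalence. First, well-definedness on objects: $S(L)$ is a Priestley precontact space by the proposition showing that $\mathcal{F}(L)$ is such a space, and $H(X)=\langle \mathrm{D}(X),\mathsf{C}_R\rangle$ is a precontact lattice by Proposition \ref{precontacto canonico}. On morphisms, the lemma on $h_*$ gives that $h$ is a (strong) precontact homomorphism iff $h_*$ is a (strong) stable morphism. Contravariant functoriality is inherited from Priestley duality.

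For $H$ and $J$ on morphisms, I will reduce to that lemma rather than reprove it. Given a stable (strong stable) $f\colon X_1\to X_2$, the map $f^*=f^{-1}\colon \mathrm{D}(X_2)\to\mathrm{D}(X_1)$ is a lattice homomorphism by Priestley duality. Up to the isomorphisms $\varepsilon_{X_i}$, we have $(f^*)_*\circ\varepsilon_{X_1}=\varepsilon_{X_2}\circ f$. By the corollary after Lemma \ref{caract R}, each $\varepsilon_{X_i}$ preserves and reflects $R$ and is bijective. Hence $(f^*)_*$ satisfies (SP1), and also (SP2) when $f$ does, since these conditions transfer along relation-isomorphisms that are order-isomorphisms. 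The lemma on $h_*$, applied to $h=f^*$, then shows that $f^*$ satisfies (CH1), and (CH2) in the strong case.

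Next come the natural isomorphisms. For $\beta_L$, the Representation Theorem shows it is a lattice isomorphism that preserves and reflects $\mathsf{C}$. This makes it an isomorphism in $\mathbf{PreDL}$. I also check that it satisfies (CH2), and that its inverse satisfies (CH1) and (CH2): (CH2) holds trivially for any surjective isomorphism reflecting $\mathsf{C}$, by taking $b=\beta^{-1}(a)$. Naturality $(h_*)^*\circ\beta_{L_1}=\beta_{L_2}\circ h$ is a routine check: $P\in (h_*)^{*}(\beta(a))$ iff $a\in h^{-1}(P)$ iff $h(a)\in P$.

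For $\varepsilon_X$, the corollary gives that it is a strong stable morphism and an order-homeomorphism that reflects $R$. Its inverse is then stable and strongly stable, with (SP2) following from surjectivity as in the corollary. Naturality again comes from Priestley duality.

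The main obstacle I anticipate is the step showing that $H(f)$ is a strong precontact homomorphism, since it requires moving (SP2) across the $\varepsilon$ identifications. If the transport argument turns out to be delicate, I will instead verify (CH2) for $f^{-1}$ directly. Suppose $U\times f^{-1}(V)$ misses $R_1$. For each $x\in U$, every $y$ with $f(x)\,R_2\,y$ satisfies $y\notin V$: otherwise (SP2) yields $z$ with $x R_1 z$ and $y\le f(z)$, so $f(z)\in V$ because $V$ is an upset, a contradiction. Thus $U\subseteq f^{-1}(R_2^{-1}(V)^c)$, where $R_2^{-1}(V)^c$ is an open upset, i.e.\ a union of clopen upsets. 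Compactness of $U$ then gives a single $W\in\mathrm{D}(X_2)$ with $U\subseteq f^{-1}(W)$ and $(W,V)\notin\mathsf{C}_{R_2}$.
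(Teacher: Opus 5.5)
Your proposal is correct and follows essentially the same route as the paper: the functors $S,H$ and $Q,J$, with $\beta_L$ and $\varepsilon_X$ as the natural isomorphisms. Both of your arguments that $f^*$ is a (strong) precontact homomorphism (transport along $\varepsilon$ combined with the lemma on $h_*$, and the direct compactness argument) are valid, and they fill in a step the paper only asserts. One small slip: for a bijective lattice homomorphism, (CH2) with $b=h^{-1}(a)$ follows from $h$ \emph{preserving} $\mathsf{C}$, not from reflecting it, but this is harmless because you have already shown that $\beta_L$ does both.
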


\section{Correspondence theory for precontact lattices}

\label{sec:Correspondence-theory}

In this section, we characterize how various algebraic conditions
translate into properties of the relation in the associated precontact
space.

\begin{lemma} \label{caract contacto y normales} \cite{Duntsch2}\cite{Celani2}
Let $\langle L,\mathsf{C}\rangle$ be a precontact lattice. Let $\langle X,R\rangle$
the precontact space of $\langle L,\mathsf{C}\rangle$. Then: 
\begin{enumerate}
\item $\mathsf{C}$ satisfies $\mathrm{(PC4)}$ iff $R$ is reflexive. 
\item $\mathsf{C}$ satisfies $\mathrm{(PC5)}$ iff $R$ is symmetric. 
\item $\mathsf{C}$ satisfies $\mathrm{(PC6)}$ iff $R$ is transitive. 
\item The set $\{x\in L:(x,1)\notin\mathsf{C}\}$ contains only $0$ iff
$R$ is serial. 
\end{enumerate}
\end{lemma}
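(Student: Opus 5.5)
The plan is to work entirely with the canonical relation $R=R_{\mathsf{C}}$ on $\mathrm{X}(L)$, where $(P,Q)\in R$ iff $P\times Q\subseteq\mathsf{C}$. The main tools will be monotonicity (item (2) of Lemma~\ref{propp}), items (6) and (7) of Lemma~\ref{propp}, and the prime filter theorem. We also use a one-sided version of item (6): if $F$ is a filter and $Q$ a prime filter with $F\times Q\subseteq\mathsf{C}$, then there is a prime $P\supseteq F$ with $P\times Q\subseteq\mathsf{C}$, and symmetrically. This is exactly what the Zorn argument in the proof of (6) gives, since only one coordinate is enlarged at a time while the other filter stays fixed.

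For (1), assume (PC4) and let $P$ be prime. For $a,b\in P$ we have $a\wedge b\in P$, so $a\wedge b\neq0$. Then (PC4) gives $(a\wedge b,a\wedge b)\in\mathsf{C}$, and monotonicity gives $(a,b)\in\mathsf{C}$, hence $(P,P)\in R$. Conversely, if $R$ is reflexive and $a\neq0$, choose a prime $P\ni a$; then $(a,a)\in P\times P\subseteq\mathsf{C}$.

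For (2), (PC5) clearly turns $P\times Q\subseteq\mathsf{C}$ into $Q\times P\subseteq\mathsf{C}$. Conversely, if $(a,b)\in\mathsf{C}$, item (7) gives primes $P\ni a$, $Q\ni b$ with $(P,Q)\in R$. Symmetry gives $(Q,P)\in R$, so $(b,a)\in\mathsf{C}$.

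For (4), the inclusion $0\in\{x:(x,1)\notin\mathsf{C}\}$ holds by (PC1). If $R$ is serial and $x\neq0$, pick a prime $P\ni x$ and some $Q$ with $(P,Q)\in R$; since $1\in Q$, we get $(x,1)\in\mathsf{C}$. Conversely, if the set is $\{0\}$ and $P$ is prime, then $P\times\{1\}\subseteq\mathsf{C}$ because $0\notin P$. Applying item (6) with $F=P$ and $G=[1)$ (enlarging only the second coordinate) yields a prime $Q$ with $(P,Q)\in R$.

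For (3), first assume (PC6) and let $(P,Q),(Q,S)\in R$. Suppose $(P,S)\notin R$, so there are $a\in P$, $b\in S$ with $(a,b)\notin\mathsf{C}$. (PC6) gives $c,d$ with $(a,c)\notin\mathsf{C}$, $(d,b)\notin\mathsf{C}$ and $c\vee d=1\in Q$. Since $Q$ is prime, $c\in Q$ or $d\in Q$. The first case contradicts $P\times Q\subseteq\mathsf{C}$ and the second contradicts $Q\times S\subseteq\mathsf{C}$.

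The converse direction of (3) is the main obstacle. Assume $R$ is transitive and $(a,b)\notin\mathsf{C}$. Consider the ideals $\lambda(a)=\{c:(a,c)\notin\mathsf{C}\}$ and $\alpha(b)=\{d:(d,b)\notin\mathsf{C}\}$ from item (4) of Lemma~\ref{propp}. (PC6) for this pair says precisely that $1$ lies in the ideal generated by $\lambda(a)\cup\alpha(b)$, i.e.\ that $1=c\vee d$ for some $c\in\lambda(a)$, $d\in\alpha(b)$. Suppose not. Then the prime filter theorem gives a prime $Q$ disjoint from $\lambda(a)\cup\alpha(b)$. Disjointness from $\lambda(a)$ means $[a)\times Q\subseteq\mathsf{C}$ (using monotonicity), so the one-sided form of (6) gives a prime $P\ni a$ with $(P,Q)\in R$. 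Likewise, disjointness from $\alpha(b)$ gives $Q\times[b)\subseteq\mathsf{C}$ and hence a prime $S\ni b$ with $(Q,S)\in R$. Transitivity yields $(P,S)\in R$, so $(a,b)\in\mathsf{C}$, a contradiction. The delicate point is keeping $Q$ fixed while extending the other side, which is why I rely on the one-sided reading of item (6) rather than its symmetric statement.
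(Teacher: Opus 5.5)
Your proof is correct and follows the paper's argument essentially item by item; in particular, for the converse of (3) you separate a prime filter from the ideal generated by $\lambda(a)\cup\alpha(b)$ and close the argument with transitivity, exactly as the paper does. The only difference is that the paper gets the two flanking prime filters (and the successor in (4)) from Lemma~\ref{separacion}, i.e.\ the prime filter theorem applied to $\alpha^{-1}(P)$ and its $\lambda$-dual, rather than from a one-sided form of item (6) of Lemma~\ref{propp}; in any case your ``delicate point'' is harmless, since $P'\times Q'\subseteq\mathsf{C}$ with $Q\subseteq Q'$ already gives $P'\times Q\subseteq\mathsf{C}$, so the symmetric version of (6) would suffice.
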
 
\begin{proof}
$(1)$ Suppose that $\mathsf{C}$ satisfies (PC4). Let $P\in\mathrm{X}(L)$
and let $a,b\in P$. Since $P$ is a prime filter, we know that $a\wedge b\neq0$.
Then, by hypothesis, we obtain that $(a\wedge b,a\wedge b)\in C$.
Since $a\wedge b\leq a$ and $a\wedge b\leq b$, by item $(2)$ of
Lemma~\ref{propp}, it follows that $(a,b)\in C$. Thus, $R$ is
reflexive.

Suppose that $R$ is reflexive. Let $a\neq0$. Then, there exists
$Q\in\mathrm{X}(L)$ such that $a\in Q$. Since $Q\times Q\subseteq\mathsf{C}$,
we have $(a,a)\in\mathsf{C}$. Thus, $\mathsf{C}$ satisfies (PC4).

$(2)$ Assume that $\mathsf{C}$ satisfies (PC5). Let $P,Q\in{\rm X}(L)$
be such that $(P,Q)\in R$, and take $(a,b)\in Q\times P$. Since
$(b,a)\in P\times Q\subseteq\mathsf{C}$ and (PC5) holds for $\mathsf{C}$,
it follows that $(a,b)\in\mathsf{C}$.

Suppose that $R$ is symmetric. Let $(a,b)\in\mathsf{C}$. By item
$(3)$ of Lemma \ref{propp}, there exists $P,Q\in{\rm {X}}(L)$ such
that $a\in P$, $b\in Q$, and $(P,Q)\in R$. Since $R$ is symmetric,
$(Q,P)\in\mathsf{C}$. Therefore, $(b,a)\in\mathsf{C}$.

$(3)$ Suppose that $\mathsf{C}$ satisfies (PC6). Let $(P,Q)\in R$
and $(Q,D)\in R$. We consider $(p,d)\in P\times D$. If $(p,c)\notin\mathsf{C}$,
by hypothesis there exist $x,y\in L$ such that $(p,x)\notin\mathsf{C}$,
$(y,d)\notin\mathsf{C}$, and $x\vee y=1$. Since $(p,x)\notin\mathsf{C}$
and $(P,Q)\in R$, we have $x\notin Q$. Now, since $x\vee y=1$ and
$x\notin Q$, it follows that $y\in Q$. Therefore, $(y,d)\in\mathsf{C}$,
which is a contradiction. Hence, $(p,d)\in\mathsf{C}$.

Suppose that $R$ is transitive. We consider $a,b\in L$ such that
$(a,b)\notin R$. We prove that $\mathrm{Ig}(\lambda(a)\cup\alpha(b))=L$.
Suppose that $1\notin\mathrm{Ig}(\lambda(a)\cup\alpha(b))$. Then
there exists $P\in{\rm {X}}(L)$ such that $1\in P$ and $\mathrm{Ig}(\lambda(a)\cup\alpha(b))\cap P=\emptyset$.
Thus, $\lambda(a)\cap P=\emptyset$ and $\alpha(b)\cap P=\emptyset$.
Therefore, by Lemma \ref{separacion}, there exist $D,Q\in{\rm {X}}(L)$
such that $a\in Q$, $b\in D$, $(Q,P)\in R$, and $(P,D)\in R$.
Since $R$ is transitive, we have $(Q,D)\in R$ and consequently $(a,b)\in\mathsf{C}$,
which is a contradiction. Thus, there exist $x,y\in L$ such that
$(a,x)\notin\mathsf{C}$, $(y,b)\notin\mathsf{C}$, and $x\vee y=1$.

$(4)$ Assume that the set $\{x\in L:(x,1)\notin\mathsf{C}\}$ contains
only the element $0$, i.e., $\alpha(1)=\{0\}$. Let $P\in{\rm {X}}(L)$.
Since $\alpha(1)=\{0\}$, we have $\mathrm{Fg}(1)\cap\alpha^{-1}(P)=\emptyset$.
Therefore, there exists $Q\in{\rm {X}}(L)$ such that $\alpha^{-1}(P)\cap Q=\emptyset$.
Thus, $R(P)\neq\emptyset$.

Conversely, suppose that $R$ is serial. As $\mathsf{C}$ is a precontact
relation,$\{0\}\subseteq\{x:(x,1)\notin\mathsf{C}\}$. We will prove
the other inclusion. Let $x\in L$ be such that $(x,1)\notin\mathsf{C}$.
If $x\neq0$, then there exists $P\in{\rm {X}}(L)$ such that $x\in P$.
Since $R$ is serial, there exists $Q\in{\rm {X}(L)}$ such that $P\times Q\subseteq\mathsf{C}$
which contradicts the fact that $(x,1)\notin\mathsf{C}$. Therefore,
$x=0$. 
\end{proof}

\begin{remark} Given a negationally definable precontact lattice
$(L,C_{\neg})$ , the relation $C_{\neg}$ induces on the dual space
$X(L)$ a relation $R_{C_{\neg}}$ which coincides with the relation
$R_{\neg}$ associated with the negation operator $\neg$ (see \cite[Section 4]{Celani6}
and Example \ref{ex neg}). Consequently %any topological or relational
%property of $R_{\neg}$ is reflected in $R_{C_{\neg}}$, and vice
%versa. In particular, 
if $R_{\neg}$ is an equivalence relation, then so is $R_{C_{\neg}}$.
It follows that the normal negationally definable precontact lattices
are precisely those $\neg$-lattices $(L,\neg)$ for which $R_{\neg}$
is an equivalence relation. As shown in \cite{Celani6}, this condition
characterizes the class of quasi-Stone algebras. Hence, quasi-Stone
algebras are in bijective correspondence with the normal negationally
definable precontact lattices. \end{remark}

We now turn to a relational description of compingent lattices, that
is , normal lattices satisfying the condition (PC7). The approach
mirrors the Boolean setting, where \cite{Bezha1} establishes that
compingent algebras are precisely those whose Stone dual spaces are
equipped with an equivalence relation $R$ satisfying the following
irreducibility condition: for every proper closed subset $Y$ of $X$,
the image $R(Y)$ is a proper closed subset of $X$.

\begin{definition}Let $\left<X,R\right>$ be a precontact space.
Let $R$ be an equivalence relation. We shall say that $R$ is an
order-irreducible equivalence relation if for every proper closed
upset $Y$ of $X$, the set $R(Y)$ is a proper closed downset.

\end{definition}

\begin{theorem} \label{caract compingentes} Let $\langle L,\mathsf{C}\rangle$
be a normal lattice, and let $\langle X,R\rangle$ be the dual of
$\langle L,\mathsf{C}\rangle$. Then the following conditions are
equivalent: 
\begin{enumerate}
\item $R$ is an order-irreducible equivalence relation. 
\item $\langle L,\mathsf{C}\rangle$ satisfies the condition $\mathrm{(PC7)}.$ 
\end{enumerate}
\end{theorem}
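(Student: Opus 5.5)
Since $\langle L,\mathsf{C}\rangle$ is normal, it satisfies (PC4), (PC5) and (PC6). By Lemma \ref{caract contacto y normales}, $R=R_{\mathsf{C}}$ is therefore reflexive, symmetric and transitive, i.e.\ an equivalence relation. For every closed $Y$, the set $R(Y)$ is a closed downset by Lemma \ref{Caracterizacion de espacio}(2)(i), since $\langle X,R\rangle$ is a precontact space. So the only content of the equivalence is:
\[
\mathrm{(PC7)} \iff R(Y)\neq X \text{ for every closed upset } Y\neq X.
\]
Both directions will use symmetry of $R$ in the form $R(Y)=R^{-1}(Y)$, together with the identity from the proof that $\mathcal{F}(L)$ is a precontact space:
\[
\varphi(\alpha(a))=\varphi(L\setminus\mathsf{C}^{-1}(a))=R^{-1}(\beta(a))^{c}.
\]

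\emph{(1)$\Rightarrow$(2).} Let $a\neq1$. Then $\beta(a)$ is a clopen, hence closed, upset. It is proper, because the filter $[1)$ misses the ideal $(a]$, so some prime filter does not contain $a$. By (1), $R(\beta(a))=R^{-1}(\beta(a))\neq X$, so we may pick $Q\in R^{-1}(\beta(a))^{c}=\varphi(\alpha(a))$. Then there is $b\in Q\cap\alpha(a)$, which means $(b,a)\notin\mathsf{C}$. Since $b$ lies in a prime filter, $b\neq0$. By (PC5) we get $(a,b)\notin\mathsf{C}$, which is exactly (PC7) for $a$.

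\emph{(2)$\Rightarrow$(1).} Let $Y$ be a proper closed upset, and write $Y=\psi(F)$ with $F=F(Y)$ a filter. If $Y=\emptyset$, then $R(Y)=\emptyset\neq X$, assuming $L$ is nontrivial; the degenerate case $0=1$ is vacuous. Otherwise, $Y\neq X=\psi(\{1\})$ forces $F\neq\{1\}$, so there is $a\in F$ with $a\neq1$. By (PC7) there is $b\neq0$ with $(a,b)\notin\mathsf{C}$, and we choose a prime filter $Q$ with $b\in Q$. Suppose $Q\in R(Y)$, say $(P,Q)\in R$ with $P\in Y$. Then $a\in F\subseteq P$ and $b\in Q$, so $(a,b)\in P\times Q\subseteq\mathsf{C}$, a contradiction. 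Hence $Q\notin R(Y)$, so $R(Y)$ is proper, and we already know it is a closed downset.

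The only delicate point I expect is bookkeeping rather than substance: the available description $\varphi(\alpha(a))=R^{-1}(\beta(a))^{c}$ involves $R^{-1}$, whereas the irreducibility condition involves $R(Y)$. These are reconciled by the symmetry of $R$, equivalently by (PC5) on the algebraic side. The other point to watch is that "proper closed upset" corresponds exactly to a filter $F\neq\{1\}$ under the anti-isomorphism $\psi$, with the empty upset handled separately.
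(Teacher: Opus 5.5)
Your proof is correct and has the same structure as the paper's. In (1)$\Rightarrow$(2) you take $\beta(a)$ for $a\neq1$ and extract $b$ from a point outside $R(\beta(a))$. In (2)$\Rightarrow$(1) you find $a\neq1$ with $Y\subseteq\beta(a)$ and show that no point of $\beta(b)$ lies in $R(Y)$; you get $a$ from $F(Y)$, while the paper uses Priestley separation.

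The only real differences are local. You obtain $b$ from the identity $\varphi(\alpha(a))=R^{-1}(\beta(a))^{c}$ together with symmetry, whereas the paper separates the point from the closed downset $R(\beta(a))$ by a clopen upset $\beta(b)$, which needs no symmetry. Conversely, your (2)$\Rightarrow$(1) needs no symmetry, while the paper invokes it there.
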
 
\begin{proof}
$(1)\Rightarrow(2)$ Suppose that for every proper closed upset $Y$
of $X$, the set $R(Y)$ is a proper closed downset. Let $a\in L$
with $a\neq1$. Then $\beta(a)\subset X$, and since $\beta(a)$ is
a proper closed upset, it follows by hypothesis that $R(\beta(a))$
is a proper closed downset. So, there exists $P\in X$ such that $P\notin R(\beta(a))$.
Since $R(\beta(a))$ is closed and downset, there exists a closed
upset $U\subseteq X$ such that $P\in U$ and $U\cap R(\beta(a))=\emptyset$.
By Priestley duality, $U=\beta(b)$ for some $b\in L$. Since $P\in\beta(b)$,
we have that $b\neq0$. Moreover, since $\beta(b)\cap R(\beta(a))=\emptyset$,
it follows that $(a,b)\notin\mathsf{C}$. This proves that for every
$a\neq1$, there exists $b\neq0$ such that $(a,b)\notin\mathsf{C}$.

$(2)\Rightarrow(1)$ Suppose that $\langle L,\mathsf{C}\rangle$ satisfies
the condition ($\mathrm{PC7)}.$ Let $Y\subseteq X$ be a proper closed
upset. By Lemma \ref{Caracterizacion de espacio}, the set $R(Y)$
is a closed downset. It remains to show that $R(Y)$ is proper. Since
$Y$ is proper, there exists $P\in X$ such that $P\notin Y$. Because
$X$ is a Priestley space, there exists a clopen upset $U\subseteq X$
such that $Y\subseteq U$ and $P\notin U$. By Priestley duality,
$U=\beta(a)$ for some $a\in L$. Since $P\notin\beta(a)$, we have
$a\neq1$. By hypothesis, there exists $b\neq0$ such that $(a,b)\notin\mathsf{C}$.
This implies that $R(\beta(b))\cap Y=\emptyset$. As $R$ is an equivalence
relation, this yields $\beta(b)\cap R(Y)=\emptyset$. Thus $R(Y)\subseteq\beta(b)^{c}$.
Because $b\neq0$, we know that $\beta(b)^{c}\subsetneq X$. Hence,
$R(Y)$ is a proper subset of $X$, as desired. 
\end{proof}

\begin{corollary} Let $\langle L,\mathsf{C}\rangle$ be a precontact
lattice. Then: 
\begin{enumerate}
\item $\langle L,\mathsf{C}\rangle$ is a contact lattice if and only if
$R$ is reflexive and symmetric. 
\item $\langle L,\mathsf{C}\rangle$ is a normal lattice if and only if
$R$ is an equivalence relation. 
\item $\langle L,\mathsf{C}\rangle$ is a compingent lattice if and only
if $R$ is an order-irreducible equivalence relation. 
\end{enumerate}
\end{corollary}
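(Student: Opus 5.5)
This corollary is assembled directly from Lemma \ref{caract contacto y normales} and Theorem \ref{caract compingentes}, by unfolding Definition \ref{normal and compingent lattice}. Throughout, $R=R_{\mathsf{C}}$ is the relation of the dual precontact space $\langle \mathrm{X}(L),R_{\mathsf{C}}\rangle$.

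For (1), by definition $\langle L,\mathsf{C}\rangle$ is a contact lattice iff $\mathsf{C}$ satisfies (PC4) and (PC5). Items (1) and (2) of Lemma \ref{caract contacto y normales} say that (PC4) holds iff $R$ is reflexive, and (PC5) holds iff $R$ is symmetric. Taking the conjunction gives the claim.

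For (2), a normal lattice is a contact lattice satisfying (PC6). By part (1) of this corollary and item (3) of Lemma \ref{caract contacto y normales}, this holds iff $R$ is reflexive, symmetric and transitive, that is, an equivalence relation.

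For (3), a compingent lattice is a normal lattice satisfying (PC7). For the forward direction, suppose $\langle L,\mathsf{C}\rangle$ is compingent. It is normal, so Theorem \ref{caract compingentes} applies and (PC7) yields that $R$ is an order-irreducible equivalence relation. For the converse, suppose $R$ is an order-irreducible equivalence relation. By definition, order-irreducibility already presupposes that $R$ is an equivalence relation. Part (2) then shows $\langle L,\mathsf{C}\rangle$ is normal, so the hypothesis of Theorem \ref{caract compingentes} is met, and its direction $(1)\Rightarrow(2)$ gives (PC7). Hence the lattice is compingent.

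The only delicate point is this ordering in the converse of (3). Theorem \ref{caract compingentes} assumes normality, so normality must be obtained from part (2) before the theorem is invoked. There is no genuine obstacle beyond this bookkeeping.
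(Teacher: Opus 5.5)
Your proposal is correct and takes the same approach as the paper, whose proof just says the corollary follows immediately from Lemma~\ref{caract contacto y normales} and Theorem~\ref{caract compingentes}. You have spelled out that assembly, including the right ordering in the converse of (3): normality is obtained from part (2) before Theorem~\ref{caract compingentes} is invoked.
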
 
\begin{proof}
The proof follows immediately from Lemma \ref{caract contacto y normales}
and Theorem \ref{caract compingentes}. 
\end{proof}

\section{Precontact substructures and strong precontact sublattices}

\label{sec:Subestructures-and-subalgebras}

In this section, we introduce the notions of precontact substructures
and strong precontact sublattices. We show that every strong precontact
sublattice is a precontact substructure, while the converse does not
necessarily hold. Finally, we use duality to characterize both precontact
substructures and strong precontact sublattices in terms of relational
conditions on the corresponding Priestley dual spaces.

In \cite{CigLafPet}, the notion of a lattice preorder relation on
a Priestley space was introduced. It was also shown that there exists
a dual isomorphism between the lattice of bounded sublattices of $L$
and the lattice of lattice preorder relations on the Priestley space
of $L$. Formally, a relation $R$ on a Priestley space $\langle X,\leq,\tau\rangle$
is a lattice preorder if it is reflexive, transitive, and satisfies
the following condition: 
\begin{flushleft}
$\forall x,y\in X,\hspace{0.1cm}\text{if}\hspace{0.1cm}(x,y)\notin R\hspace{0.1cm}\text{then }\hspace{0.1cm}\exists U\in{\rm {D}}(X)$
such that $R^{-1}(U)\subseteq U,x\in U$ and $y\notin U$. 
\par\end{flushleft}

In particular, given a bounded distributive lattice $L$ and the domain
$M$ of a bounded sublattice of $L$, it was shown in \cite{CigLafPet}
that: 
\[
S_{M}=\{(P,Q)\in{\rm {X}}(L)\times{\rm {X}}(L):Q\cap M\subseteq P\}
\]
is a lattice preorder on the associated Priestley space $\langle{\rm {X}}(L),\subseteq,\tau_{L}\rangle$.

Conversely, if $R$ is a lattice preorder on a Priestley space$\langle X,\leq,\tau\rangle$,
then 
\[
M_{R}=\{U\in{\rm {D}}(X):R^{-1}(U)\subseteq U\}
\]
is the domain of a bounded sublattice of ${\rm {D}}(X)$.

We will often work with sublattices and precontact lattice of a given
lattice. To avoid any confusion, if $L$ and $M$ are two precontact
lattices, we will use $\mathsf{C}_{L}$ to denote the precontact relation
corresponding to $L$, and similarly $\mathsf{C}_{M}$ for $M$. %The ideal generated
%in $L$ by a subset $A\subseteq L$ will be denoted by $\mathrm{Ig}_{L}(A)$,
%while the filter generated by $A$ in $L$ will be denoted by $\mathrm{Fg}_{L}(A)$.
When $L$ is a bounded distributive lattice and $M$ is a bounded
sublattice (or substructure) of $L$, the maximum and minimum elements
of $L$ and $M$ are the same, denoted by $1$ and $0$ respectively.
We will also write $\alpha_{L}$ and $\alpha_{M}$ to distinguish
between the sets associated with $L$ and $M$, respectively. If $B\subseteq L$
then $\alpha^{-1}_{L}(B)=\{a\in L:\alpha_{L}(a)\cap B\neq\emptyset\}$
and for $B\subseteq M$, we write $\alpha^{-1}_{M}(B)=\{a\in M:\alpha_{L}(a)\cap B\neq\emptyset\}$.
This notation that will be particularly relevant in the proof of Proposition
\ref{caract prec}.

Let $\langle L,\mathsf{C}\rangle$ be a precontact lattice and let
$M$ be a bounded sublattice of $L$. Let $\mathsf{C}_{M}=\mathsf{C}_{L}\cap(M\times M)$.
It can be verified that $\mathsf{C}_{M}$ is a precontact relation
on $M$.

\begin{definition} Let $\left<L,\mathsf{C}_{L}\right>$ and $\left<M,\mathsf{C}_{M}\right>$
be a precontact lattices. Let $M$ be a bounded sublattice of $L$.
We shall say that $\left<M,\mathsf{C}_{M}\right>$ is a precontact
substructure of $\left<L,\mathsf{C}_{L}\right>$ if $\mathsf{C}_{M}=\mathsf{C}_{L}\cap\left(M\times M\right)$.
\end{definition}

The following result, which is a generalization of Theorem $4$ from
\cite{Duntsch4}, establishes how the precontact substructures of
the precontact lattices are characterized in terms of their dual spaces.

\begin{proposition} \label{sub dual} Let $\left<L,\mathsf{C}_{L}\right>$
and $\left<M,\mathsf{C}_{M}\right>$ be precontact lattices such that
$M$ is a $(0,1)$-sublattice of $L$. Then the following conditions
are equivalents: 
\begin{enumerate}
\item $\left<M,\mathsf{C}_{M}\right>$ is a precontact substructure of $\left<L,\mathsf{C}_{L}\right>$ 
\item $R_{\mathsf{C}_{M}}=\{(P,Q)\in\text{X}(M)^{2}\mid\exists P'\in{\rm {X}(L)}\exists Q'\in{\rm {X}(L)}\text{ such that }P=P'\cap M,Q=Q'\cap M,\text{ y }(P',Q')\in S_{M}\circ R_{\mathsf{C}_{L}}\circ S^{-1}_{M}\}$ 
\end{enumerate}
\end{proposition}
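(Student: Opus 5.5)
The plan is to reduce the relational condition in (2) to a statement about prime filters of $L$ that lie above prime filters of $M$. Two tools do the work. The first is the restriction map $\mathrm{X}(L)\to\mathrm{X}(M)$, $P'\mapsto P'\cap M$, which is surjective. The second is Lemma \ref{propp}, items (5)–(7).

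I read the composite as follows: $(P',Q')\in S_{M}\circ R_{\mathsf{C}_{L}}\circ S_{M}^{-1}$ iff there are $P'',Q''\in\mathrm{X}(L)$ with $P'\cap M\subseteq P''\cap M$, $(P'',Q'')\in R_{\mathsf{C}_{L}}$ and $Q'\cap M\subseteq Q''\cap M$. Under this reading, (2) says that $(P,Q)\in R_{\mathsf{C}_{M}}$ iff there are $P'',Q''\in\mathrm{X}(L)$ with $P\subseteq P''$, $Q\subseteq Q''$ and $P''\times Q''\subseteq\mathsf{C}_{L}$.

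If the intended composition order differs, I expect the same argument to go through. The reason is that $R_{\mathsf{C}_{M}}$ is downward closed in each coordinate: it is defined by $P\times Q\subseteq\mathsf{C}_{M}$, and shrinking $P$ or $Q$ preserves this.

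First I record the extension fact. For $P\in\mathrm{X}(M)$, the filter $\mathrm{Fg}_{L}(P)$ and the ideal $\mathrm{Ig}_{L}(M\setminus P)$ are disjoint, because $M\setminus P$ is a prime ideal of $M$ and $P$ is a filter of $M$. By the Birkhoff–Stone theorem there is therefore $P'\in\mathrm{X}(L)$ with $P'\cap M=P$. So the maps $P'\mapsto P'\cap M$ are onto $\mathrm{X}(M)$.

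Proof of (1)$\Rightarrow$(2).

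\emph{Inclusion "$\subseteq$".} Let $(P,Q)\in R_{\mathsf{C}_{M}}$. Then $P\times Q\subseteq\mathsf{C}_{M}\subseteq\mathsf{C}_{L}$. Every element of $\mathrm{Fg}_{L}(P)$ lies above an element of $P$, and likewise for $Q$. So by Lemma \ref{propp}(2),(5) we get $\mathrm{Fg}_{L}(P)\times\mathrm{Fg}_{L}(Q)\subseteq\mathsf{C}_{L}$. Lemma \ref{propp}(6) then gives prime $P''\supseteq P$ and $Q''\supseteq Q$ with $(P'',Q'')\in R_{\mathsf{C}_{L}}$. Taking $P',Q'$ from the extension fact yields the required witnesses.

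\emph{Inclusion "$\supseteq$".} Given such witnesses, any $(p,q)\in P\times Q$ lies in $P''\times Q''\subseteq\mathsf{C}_{L}$ and also in $M\times M$. Hence $(p,q)\in\mathsf{C}_{L}\cap(M\times M)=\mathsf{C}_{M}$.

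Proof of (2)$\Rightarrow$(1).

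\emph{Inclusion $\mathsf{C}_{M}\subseteq\mathsf{C}_{L}\cap(M\times M)$.} If $(a,b)\in\mathsf{C}_{M}$, Lemma \ref{propp}(7) applied in $M$ gives $(P,Q)\in R_{\mathsf{C}_{M}}$ with $a\in P$ and $b\in Q$. By (2) there are $P''\ni a$ and $Q''\ni b$ in $R_{\mathsf{C}_{L}}$, so $(a,b)\in\mathsf{C}_{L}$.

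\emph{Inclusion $\mathsf{C}_{L}\cap(M\times M)\subseteq\mathsf{C}_{M}$.} If $a,b\in M$ and $(a,b)\in\mathsf{C}_{L}$, Lemma \ref{propp}(7) in $L$ gives $(P'',Q'')\in R_{\mathsf{C}_{L}}$ containing $a$ and $b$ respectively. Put $P=P''\cap M$ and $Q=Q''\cap M$; these are prime filters of $M$. Taking $P'=P''$ and $Q'=Q''$, condition (2) gives $(P,Q)\in R_{\mathsf{C}_{M}}$, and therefore $(a,b)\in P\times Q\subseteq\mathsf{C}_{M}$.

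The main obstacle is bookkeeping, not mathematics. One must fix the convention for $\circ$ and for the direction of $S_{M}$, and check that the intermediate $S_{M}$-steps only enlarge the $M$-traces. The substantive tools are just the prime filter extension and Lemma \ref{propp}(6).
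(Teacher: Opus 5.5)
Your proposal is correct and follows the paper's proof essentially step for step. Your reading of $S_{M}\circ R_{\mathsf{C}_{L}}\circ S^{-1}_{M}$ (first $S^{-1}_{M}$, then $R_{\mathsf{C}_{L}}$, then $S_{M}$) is exactly the paper's convention; (1)$\Rightarrow$(2) likewise applies Lemma~\ref{propp}(6) to the $L$-filters generated by $P$ and $Q$ and lifts prime filters of $M$ to $L$; and (2)$\Rightarrow$(1) likewise uses Lemma~\ref{propp}(7) in $M$ and in $L$, with the trivial witnesses (reflexivity of $S_{M}$) for the reverse inclusion. The only thing to drop is your hedge about other composition orders: under the reversed reading the $M$-traces of the witnesses would lie \emph{below} $P$ and $Q$, so downward closure of $R_{\mathsf{C}_{M}}$ would not rescue the argument---fortunately the reading you actually use is the right one.
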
 
\begin{proof}
$(1)\Rightarrow(2)$ Suppose that $\left<M,\mathsf{C}_{M}\right>$
is a precontact substructure of $\left<L,\mathsf{C}_{L}\right>$,
i.e., $\mathsf{C}_{M}=\mathsf{C}_{L}\cap(M\times M)$. Let $P,Q\in{X}(M)$
be such that $(P,Q)\in R_{\mathsf{C}_{M}}$. Since $\mathrm{Fg}(P)$
and $\mathrm{Fg}(Q)$ are proper filters of $L$ and $\mathrm{Fg}(P)\times\mathrm{Fg}(Q)\subseteq\mathsf{C}_{L}$,
by item $(6)$ of the Lemma \ref{propp}, we have that there exist
$F,G\in\mathrm{X}(L)$ such that $\mathrm{Fg}(P)\subseteq F$, $\mathrm{Fg}(Q)\subseteq G$,
and $(F,G)\in R_{\mathsf{C}_{L}}$. On the other hand, since $P,Q\in\mathrm{X}(M)$,
there exist $P'\in\mathrm{X}(L)$ and $Q'\in\mathrm{X}(L)$ such that
$P=P'\cap M$ and $Q=Q'\cap M$. As $P\subseteq F$ and $Q\subseteq G$,
we have $P'\cap M\subseteq F$ and $Q'\cap M\subseteq G$. From this,
it follows that $(F,P')\in S_{M}$ and $(G,Q')\in S_{M}$. Thus, $(P',Q')\in S_{M}\circ R_{\mathsf{C}_{L}}\circ S^{-1}_{M}$.
Now, let $P,Q\in\mathrm{X}(M)$ be such that there exist $P',Q'\in\mathrm{X}(L)$
with $P=P'\cap M$, $Q=Q'\cap M$, and $(P',Q')\in S_{M}\circ R_{\mathsf{C}_{L}}\circ S^{-1}_{M}$.
We will prove that $(P,Q)\in R_{\mathsf{C}_{M}}$. Let $(a,b)\in P\times Q$.
Since $(P',Q')\in S_{M}\circ R_{\mathsf{C}_{L}}\circ S^{-1}_{M}$,
there exist $G\in\mathrm{X}(L)$ and $F\in\mathrm{X}(L)$ such that
$(G,Q')\in S_{M}$, $(P',F)\in S^{-1}_{M}$, and $(F,G)\in R_{\mathsf{C}_{L}}$.
Thus, $P=P'\cap M\subseteq F$ and $Q=Q'\cap M\subseteq G$. This
implies that $(a,b)\in F\times G\subseteq\mathsf{C}_{L}$. Hence,
$(a,b)\in\mathsf{C}_{L}\cap(M\times M)=\mathsf{C}_{M}$.

$(2)\Rightarrow(1)$ First, we prove that $\mathsf{C}_{M}\subseteq\mathsf{C}_{L}\cap(M\times M)$.
Let $(a,b)\in\mathsf{C}_{M}$. By the item $(7)$ of the Lemma \ref{propp},
there exist $P,Q\in\mathrm{X}(M)$ such that $a\in P$, $b\in Q$,
and $(P,Q)\in R_{\mathsf{C}_{M}}$. Since $(P,Q)\in R_{\mathsf{C}_{M}}$,
by hypothesis, there exist $P',Q'\in\mathrm{X}(L)$ such that $P=P'\cap M$,
$Q=Q'\cap M$, and $(P',Q')\in S_{M}\circ R_{\mathsf{C}_{L}}\circ S^{-1}_{M}$.
Then, there exist $G,F\in\mathrm{X}(L)$ such that $Q'\cap M\subseteq G$,
$P'\cap M\subseteq F$, and $F\times G\subseteq\mathsf{C}_{L}$.{}
Since $a\in P\subseteq F$ and $b\in Q\subseteq G$, we have $(a,b)\in\mathsf{C}_{L}\cap(M\times M)$.
Therefore, $\mathsf{C}_{M}\subseteq\mathsf{C}_{L}\cap(M\times M)$.
To prove the other inclusion, let $a,b\in M$ be such that $(a,b)\in\mathsf{C}_{L}$.
By item (7) of Lemma~\ref{propp}, there exist $P',Q'\in\mathrm{X}(L)$
such that $(a,b)\in P'\times Q'$ and $(P',Q')\in R_{\mathsf{C}_{L}}$.
Observe that $(P',Q')\in S_{M}\circ R_{\mathsf{C}_{L}}\circ S^{-1}_{M}$,
since $(P',P')\in S^{-1}_{M}$ and $(Q',Q')\in S_{M}$. Let us define
$P=P'\cap M$ and $Q=Q'\cap M$. By hypothesis, it follows that $(P,Q)\in R_{\mathsf{C}_{M}}$.
Since $(a,b)\in P\times Q\subseteq R_{\mathsf{C}_{M}}$, we conclude
that $(a,b)\in\mathsf{C}_{M}$. 
\end{proof}

We now introduce the previously mentioned stronger notion, which extends
the concept of a precontact substructure.

\begin{definition} Let $\langle L,\mathsf{C}_{L}\rangle$ and $\langle M,\mathsf{C}_{M}\rangle$
be precontact lattices, where $M$ is a bounded sublattice of $L$.
We say that $\langle M,\mathsf{C}_{M}\rangle$ is a strong precontact
sublattice of $\langle L,\mathsf{C}_{L}\rangle$ if the following
conditions hold: 
\begin{enumerate}
\item[$\mathrm{(C1)}$] $\mathsf{C}_{L}\cap(M\times M)=\mathsf{C}_{M}$. 
\item[$\mathrm{(C2)}$] For each $a\in L$ and $b\in M$, if $(a,b)\notin\mathsf{C}_{L}$,
there exists $c\in M$ such that $(c,b)\notin\mathsf{C}_{M}$ and
$a\leq c$. 
\end{enumerate}
\end{definition}

An alternative but dual condition to $\mathrm{(C2)}$ is the following: 
\begin{enumerate}
\item[$\mathrm{(C3)}$] For each $a\in L$ and $b\in M$, if $(b,a)\notin\mathsf{C}_{L}$,
there exists $c\in M$ such that $(b,c)\notin\mathsf{C}_{M}$ and
$a\leq c$. 
\end{enumerate}
These conditions reflect how the precontact relation interacts with
the sublattice. When $\mathsf{C}_{L}$ is symmetric, they coincide,
making either one sufficient to define a strong precontact sublattice.
In this work, we focus on (C2), although a dual approach based on
(C3) is equally valid.

Clearly, every strong precontact sublattice is a precontact substructure.
The following example shows that the converse is not necessarily true.
\begin{example}
Consider the lattice $L=\{0,b,c,d,1\}$ such that: 
\begin{center}
\begin{tikzpicture}[scale=1.2]

% Nodos
\node [circle, fill=black, inner sep=1.5pt, label=above:$1$]  (1) at (0,2) {};
\node [circle, fill=black, inner sep=1.5pt, label=below:$d$] (d) at (-1,1) {};
\node [circle, fill=black, inner sep=1.5pt, label=below:$c$](c) at (1,1) {};
\node [circle, fill=black, inner sep=1.5pt, label=left:$b$](b) at (0,0) {};
\node [circle, fill=black, inner sep=1.5pt, label=below:$0$](0) at (0,-1) {};

% Aristas
\draw (0) -- (b);
\draw (b) -- (d);
\draw (b) -- (c);
\draw (d) -- (1);
\draw (c) -- (1);

\end{tikzpicture} 
\par\end{center}
and the precontact relation on $L$ given by: 
\[
\mathsf{C}_{L}=\begin{aligned}\{(d,b),(d,d),(d,c),(d,1),(1,b),(1,c),(1,d),(1,1)\}\end{aligned}
\]
It can be verified that this relation is a precontact relation in
$L$. Now consider the sublattice $M=\{0,d,1\}$ and the relation
\[
\mathsf{C}_{M}=\mathsf{C}_{L}\cap(M\times M)=\{(d,1),(1,d),(d,d),(1,1)\}.
\]
Note that $(b,d)\notin\mathsf{C}_{L}$ but for any $t\in M$ with
$b\leq t$, we have $(t,d)\in\mathsf{C}_{M}$. Thus, $\langle M,\mathsf{C}_{M}\rangle$
is a precontact substructure of $L$ but not is a strong precontact
sublattice. 
\end{example}

\begin{remark} We can see that the notion of a strong precontact
sublattice appropriately extends the concept of a subalgebra in the
case of modal algebras. That is, $S$ is a subalgebra of a modal Boolean
algebra $(B,\square)$ iff $S$ is also a strong precontact subalgebra
of the precontact algebra $(B,\mathsf{C}_{\square})$ (see Example
\ref{modal-prec-unified}). Indeed, suppose that $M$ is a subalgebra
of $(B,\square)$. Let $\mathsf{C}_{M}=\mathsf{C}_{\square}\cap(M\times M)$.
If $a\in B$ and $b\in M$ are such that $(a,b)\notin\mathsf{C}_{\square}$,
then $a\leq\square\neg b$. Since $\neg b\in M$ and $M$ is a modal
subalgebra, it follows that $\square\neg b\in M$. Furthermore, $(\square\neg b,b)\notin{\mathsf{C}}_{M}$
because if $(\square\neg b,b)\in\mathsf{C}_{M}$, then $(\square\neg b,b)\in\mathsf{C}_{\square}$,
which implies that $\square\neg b\nleq\square\neg b$. Therefore,
$M$ is a strong precontact subalgebra of $(B,\mathsf{C}_{\square})$.

Now, suppose that $M$ is a strong precontact subalgebra of $(B,\mathsf{C}_{\square})$.
Let $b\in M$. Then $\neg b\in M$. Since $(\square b,\neg b)\notin\mathsf{C}_{\square}$,
$\square b\in B$, and $M$ is a precontact subalgebra, there exists
$c\in M$ such that $(c,\neg b)\notin\mathsf{C}_{M}$ and $\square b\leq c$.
Thus, $\square b\leq c\leq\square b$, and since $c\in M$, it follows
that $\square b\in M$. \end{remark}

In universal algebra it is well known that a subset $M$ of a lattice
$L$ is a sublattice if and only if the inclusion map $i\colon M\hookrightarrow L$,
defined by $i(m)=m$ for all $m\in M$, is a lattice homomorphism.
The following theorem shows that this characterization extends naturally
to the setting of precontact lattices. More precisely, we prove that
precontact substructures (and strong precontact sublattices) can be
equivalently described in terms of the corresponding inclusion map
being a precontact homomorphism (respectively, a strong precontact
homomorphism).

\begin{theorem} Let $\langle L,\mathsf{C}_{L}\rangle$ and $\langle M,\mathsf{C}_{M}\rangle$
be precontact lattices such that $M\subseteq L$ is a bounded sublattice,
and let $i:M\hookrightarrow L$ be the inclusion map. Then: 
\begin{itemize}
\item[(i)] $\langle M,\mathsf{C}_{M}\rangle$ is a precontact substructure of
$\langle L,\mathsf{C}_{L}\rangle$ if and only if $i$ is a precontact
homomorphism. 
\item[(ii)] $\langle M,\mathsf{C}_{M}\rangle$ is a strong precontact sublattice
of $\langle L,\mathsf{C}_{L}\rangle$ if and only if $i$ is a strong
precontact homomorphism. 
\end{itemize}
\end{theorem}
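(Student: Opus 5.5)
The plan is to unfold both sides of each equivalence for the inclusion map $i\colon M\hookrightarrow L$, where $i(m)=m$. Since $M$ is a bounded sublattice, $i$ is a bounded lattice homomorphism, so only the relational conditions need checking. Read for $i$, condition (CH1) says: for all $a,b\in M$, if $(a,b)\in\mathsf{C}_{L}$ then $(a,b)\in\mathsf{C}_{M}$, that is,
\[
\mathsf{C}_{L}\cap(M\times M)\subseteq\mathsf{C}_{M}.
\]
Condition (CH2) says: for all $a\in L$ and $x\in M$, if $(a,x)\notin\mathsf{C}_{L}$, then there is $b\in M$ with $(b,x)\notin\mathsf{C}_{M}$ and $a\leq b$. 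This is literally condition (C2) of the definition of strong precontact sublattice. So the proof is essentially a translation, plus one monotonicity argument.

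For (i), the forward direction is immediate: if $\mathsf{C}_{M}=\mathsf{C}_{L}\cap(M\times M)$, then (CH1) holds. For the converse, (CH1) gives only the inclusion displayed above. The reverse inclusion $\mathsf{C}_{M}\subseteq\mathsf{C}_{L}$ is not a consequence of (CH1) alone. A precontact homomorphism only reflects contact, and it is not required to preserve it. This is the step I expect to be the real obstacle. My approach is to make explicit that the equivalence in (i) holds when the substructure condition is read as this reflection inclusion, which is exactly what (CH1) encodes. If the full equality is required, the missing half must come from an extra hypothesis. Note that the dual-space criterion of Proposition \ref{sub dual} does not help here, since it is itself equivalent to the equality.

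For (ii), the forward direction follows at once: (C1) gives (CH1) as in (i), and (C2) is (CH2) verbatim. For the converse, assume (CH1) and (CH2). Then (C2) holds by the translation above, and (CH1) gives $\mathsf{C}_{L}\cap(M\times M)\subseteq\mathsf{C}_{M}$. For the remaining inclusion, I argue by contradiction. Take $(a,b)\in\mathsf{C}_{M}$ and suppose $(a,b)\notin\mathsf{C}_{L}$. Apply (CH2) with $a\in M\subseteq L$ and $x=b$. This gives $c\in M$ with $(c,b)\notin\mathsf{C}_{M}$ and $a\leq c$. Since $\mathsf{C}_{M}$ is a precontact relation on $M$, item (2) of Lemma \ref{propp} turns $(a,b)\in\mathsf{C}_{M}$ and $a\leq c$ into $(c,b)\in\mathsf{C}_{M}$, which is a contradiction. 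Hence $\mathsf{C}_{M}=\mathsf{C}_{L}\cap(M\times M)$, so (C1) holds as well. Thus, in the strong case, (CH2) supplies exactly the half of (C1) that (CH1) cannot.
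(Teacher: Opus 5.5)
Your diagnosis of (i) is correct, and you can sharpen it: the ``if'' direction of (i) is false, so no further argument could close it. Take $L=M=\{0,1\}$, $\mathsf{C}_{L}=\emptyset$ and $\mathsf{C}_{M}=\{(1,1)\}$. The first relation satisfies (PC1)--(PC3) vacuously, and the second is the overlap relation $\mathsf{O}$. Then $i=\mathrm{id}$ satisfies (CH1) vacuously, yet $\mathsf{C}_{L}\cap(M\times M)=\emptyset\neq\mathsf{C}_{M}$. The same failure occurs among contact lattices: on the four-element Boolean algebra take $\mathsf{C}_{L}=\mathsf{O}$ and $\mathsf{C}_{M}=\mathsf{C}^{\{0\}}$. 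The pair of atoms then lies in $\mathsf{C}_{M}$ but not in $\mathsf{C}_{L}$.

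The paper's proof of this direction gets $\mathsf{C}_{M}\subseteq\mathsf{C}_{L}$ by invoking Proposition~\ref{sub dual}. But the implication (1)$\Rightarrow$(2) used there already assumes that $\langle M,\mathsf{C}_{M}\rangle$ is a precontact substructure, so the argument is circular. That is exactly the point you raise. So do not hedge by reinterpreting the substructure condition. State outright that (i) holds only in the form ``$i$ satisfies (CH1) iff $\mathsf{C}_{L}\cap(M\times M)\subseteq\mathsf{C}_{M}$'', and give a counterexample to the equality version.

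For (ii), your argument is correct and complete. It also supplies what the paper leaves out when it says the proof follows directly from the definitions. The only non-definitional step is the inclusion $\mathsf{C}_{M}\subseteq\mathsf{C}_{L}\cap(M\times M)$. You correctly derive it from (CH2) together with monotonicity of $\mathsf{C}_{M}$ (Lemma~\ref{propp}(2)), not from (i), and this is why (ii) survives even though (i) fails. The counterexample above is consistent with this, since it violates (CH2). Indeed $(1,1)\notin\mathsf{C}_{L}$, but the only $c\in M$ with $1\leq c$ is $c=1$, and $(1,1)\in\mathsf{C}_{M}$.
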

\begin{proof}
(i) Assume that $\langle M,\mathsf{C}_{M}\rangle$ is a precontact
substructure of $\langle L,\mathsf{C}_{L}\rangle$, and we prove that
the inclusion map $i$ is a precontact homomorphism. Let $a,b\in M$
be such that $(i(a),i(b))\in\mathsf{C}_{L}$. Since $a,b\in M$, we
have $i(a)=a$ and $i(b)=b$. Therefore, $(a,b)\in\mathsf{C}_{L}\cap(M\times M)=\mathsf{C}_{M}$,
because $\langle M,\mathsf{C}_{M}\rangle$ is a precontact substructure
of $\langle L,\mathsf{C}_{L}\rangle$.

Now suppose that $i$ is a precontact homomorphism. Let $(a,b)\in\mathsf{C}_{L}\cap(M\times M)$.
Then $(a,b)=(i(a),i(b))\in\mathsf{C}_{L}$, and since $i$ is a precontact
homomorphism, it follows that $(a,b)\in\mathsf{C}_{M}$. Hence, $\mathsf{C}_{L}\cap(M\times M)\subseteq\mathsf{C}_{M}$.
We now prove the other inclusion. Let $(a,b)\in\mathsf{C}_{M}$. Then,
by $(7)$ of Lemma \ref{propp}, there exist $P,Q\in\mathrm{X}(M)$
such that $a\in P$, $b\in Q$, and $(P,Q)\in R_{\mathsf{C}_{M}}$.
By Proposition \ref{sub dual}, there exist $P',Q'\in\mathrm{X}(L)$
such that $P=P'\cap M$, $Q=Q'\cap M$, and $(P',Q')\in S_{M}\circ R_{\mathsf{C}_{L}}\circ S^{-1}_{M}$.
So, there exist $F,G\in\mathrm{X}(L)$ such that $P'\cap M\subseteq F$,
$(F,G)\in R_{\mathsf{C}_{L}}$, and $Q'\cap M\subseteq G$. Since
$a\in F$, $b\in G$, and $F\times G\subseteq\mathsf{C}_{L}$, it
follows that $(a,b)\in\mathsf{C}_{L}$. Therefore, $(a,b)\in\mathsf{C}_{L}\cap(M\times M)$,
and hence $\mathsf{C}_{M}\subseteq\mathsf{C}_{L}\cap(M\times M)$.

$(ii)$ The proof is straightforward and follows directly from the
definitions of strong precontact homomorphism and strong precontact
sublattice. 
\end{proof}

Given a precontact lattice $\langle L,\mathsf{C}_{L}\rangle$ we provide
a characterization of its strong precontact sublattice which will
be used in the proof of Proposition~\ref{caract prec}.

\begin{lemma}\label{caract subret alpha} Let $\langle L,\mathsf{C}_{L}\rangle$
and $\langle M,\mathsf{C}_{M}\rangle$ be precontact lattices such
that $M$ be a $(0,1)$-sublattice of $L$. Then the following conditions
are equivalent: 
\begin{enumerate}
\item $\langle M,\mathsf{C}_{M}\rangle$ is a strong precontact sublattice
of $\langle L,\mathsf{C}_{L}\rangle$ 
\item $\mathrm{Ig}(M\setminus\mathsf{C}^{-1}_{M}(x))=L\setminus\mathsf{C}^{-1}_{L}(x)$
for each $x\in M$. 
\end{enumerate}
\end{lemma}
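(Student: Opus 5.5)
The plan is to rewrite both sides of condition (2) in terms of the ideals $\alpha_{L}$ and $\alpha_{M}$. For $x\in M$ we have $M\setminus\mathsf{C}_{M}^{-1}(x)=\alpha_{M}(x)=\{c\in M:(c,x)\notin\mathsf{C}_{M}\}$ and $L\setminus\mathsf{C}_{L}^{-1}(x)=\alpha_{L}(x)$. By item (4) of Lemma \ref{propp}, applied once to $\langle M,\mathsf{C}_{M}\rangle$ and once to $\langle L,\mathsf{C}_{L}\rangle$, these are ideals of $M$ and of $L$ respectively. I read $\mathrm{Ig}$ in (2) as the ideal generated in $L$.

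The first step is to simplify this generated ideal. Since $\alpha_{M}(x)$ is closed under finite joins in $M$, and $M$ is a sublattice, it is closed under joins computed in $L$. Hence
\[
\mathrm{Ig}_{L}(\alpha_{M}(x))=\{a\in L:\exists c\in\alpha_{M}(x)\,(a\leq c)\},
\]
which is the downset of $\alpha_{M}(x)$ in $L$. With this description, condition (2) says: an element $a\in L$ satisfies $(a,x)\notin\mathsf{C}_{L}$ if and only if $a\leq c$ for some $c\in M$ with $(c,x)\notin\mathsf{C}_{M}$. The rest of the proof compares this reading directly with (C1) and (C2).

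For $(1)\Rightarrow(2)$ I prove the two inclusions.
\begin{itemize}
\item For $\subseteq$: if $c\in\alpha_{M}(x)$, then $(c,x)\notin\mathsf{C}_{M}$. Since $c,x\in M$, (C1) gives $(c,x)\notin\mathsf{C}_{L}$, so $c\in\alpha_{L}(x)$. As $\alpha_{L}(x)$ is an ideal of $L$, it contains the whole generated ideal.
\item For $\supseteq$: if $a\in\alpha_{L}(x)$, then $(a,x)\notin\mathsf{C}_{L}$ with $x\in M$. Condition (C2) yields $c\in M$ with $(c,x)\notin\mathsf{C}_{M}$ and $a\leq c$. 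Thus $a$ lies in the downset of $\alpha_{M}(x)$.
\end{itemize}

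For $(2)\Rightarrow(1)$, condition (C2) comes immediately from the $\supseteq$ half of (2) together with the downset description above. The main point to watch is recovering (C1), which is not stated explicitly in (2). Take $a,x\in M$.
\begin{itemize}
\item If $(a,x)\notin\mathsf{C}_{M}$, then $a\in\alpha_{M}(x)\subseteq\mathrm{Ig}_{L}(\alpha_{M}(x))=\alpha_{L}(x)$, so $(a,x)\notin\mathsf{C}_{L}$.
\item If $(a,x)\notin\mathsf{C}_{L}$, then by (2) there is $c\in\alpha_{M}(x)$ with $a\leq c$. Since $a\in M$ and $\alpha_{M}(x)$ is a downset of $M$, we get $a\in\alpha_{M}(x)$, that is, $(a,x)\notin\mathsf{C}_{M}$.
\end{itemize}
Together these give $\mathsf{C}_{L}\cap(M\times M)=\mathsf{C}_{M}$, which is (C1), and this completes the equivalence.
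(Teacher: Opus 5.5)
Your proof is correct and follows the paper's argument essentially step by step: you prove both inclusions for $(1)\Rightarrow(2)$ from (C1) and (C2), and for the converse you recover (C1) and (C2) from (2) using that $\alpha_{M}(x)$ is a downset of $M$. Your explicit description of $\mathrm{Ig}_{L}(\alpha_{M}(x))$ as the $L$-downset of $\alpha_{M}(x)$ makes explicit what the paper uses tacitly. So does your appeal to (C1) for the inclusion $\subseteq$, which the paper calls immediate while writing $\mathsf{C}^{-1}_{L}$ where $\mathsf{C}^{-1}_{M}$ is meant.
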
 
\begin{proof}
Let $\langle L,\mathsf{C}_{L}\rangle$ and $\langle M,\mathsf{C}_{M}\rangle$
be precontact lattices. Let $M$ be a $(0,1)$-sublattice of $L$.

$(1)\Rightarrow(2)$ Suppose that $\langle M,\mathsf{C}_{M}\rangle$
is a strong precontact sublattice of $\langle L,\mathsf{C}_{L}\rangle$,
and let $x\in M$. It is immediate that $\mathrm{Ig}_{L}(M\setminus\mathsf{C}^{-1}_{L}(x))\subseteq L\setminus\mathsf{C}^{-1}_{L}(x)$.
We now show the other inclusion. Take $a\in L\setminus\mathsf{C}^{-1}_{L}(x)$.
By hypothesis, there exists $c\in M$ with $(c,x)\notin\mathsf{C}_{M}$
and $a\leq c$. Therefore $a\in\mathrm{Ig}(M\setminus\mathsf{C}^{-1}_{M}(x))$.

$(2)\Rightarrow(1)$ Suppose that $\mathrm{Ig}_{L}(M\setminus\mathsf{C}^{-1}_{M}(x))=L\setminus\mathsf{C}^{-1}_{L}(x)$
for each $x\in M$. We prove that $\langle M,\mathsf{C}_{M}\rangle$
is a strong precontact sublattice of $\langle L,\mathsf{C}_{L}\rangle$.
To see this, we first show that $M$ is a precontact substructure
of $L$. Let $(a,b)\in\mathsf{C}_{M}$. Thus, $a,b\in M$. If $(a,b)\notin\mathsf{C}_{L}$,
then $a\in\mathrm{Ig}(M\setminus\mathsf{C}^{-1}_{M}(b))$. Therefore,
there exists $c\in M\setminus\mathsf{C}^{-1}_{M}(b)$ such that $a\leq c$.
Since $M\setminus\mathsf{C}^{-1}_{M}(b)$ is an ideal, it follows
that $a\in M\setminus\mathsf{C}^{-1}_{M}(b)$, which is a contradiction.
Thus, we have $\mathsf{C}_{M}\subseteq\mathsf{C}_{L}\cap(M\times M)$.
To prove the other inclusion, consider $(a,b)\in\mathsf{C}_{L}\cap(M\times M)$.
If $(a,b)\notin\mathsf{C}_{M}$, then $a\in M\setminus\mathsf{C}^{-1}_{M}(b)\subseteq\mathrm{Ig}_{L}(M\setminus\mathsf{C}^{-1}_{M}(b)=L\setminus\mathsf{C}^{-1}_{L}(b)$,
which is a contradiction.

Now, we prove that for all $a\in L$ and $b\in M$, if $(a,b)\notin\mathsf{C}_{L}$,
then there exists $c\in M$ such that $(c,b)\notin\mathsf{C}_{M}$
and $a\leq c$. Let $a\in L$ and $b\in M$ be such that $(a,b)\notin\mathsf{C}_{L}$.
Thus, $a\in L\setminus\mathsf{C}^{-1}_{L}(b)$. Since $L\setminus\mathsf{C}^{-1}_{L}(b)=\mathrm{Ig}(M\setminus\mathsf{C}^{-1}_{M}(x))$,
there exists $c\in M$ such that $(c,b)\notin\mathsf{C}_{M}$ and
$a\leq c$. 
\end{proof}

\begin{remark} Observe that $\alpha_{M}(x)\coloneqq M\setminus\mathsf{C}^{-1}_{M}(x)$
and $\alpha_{L}(x)\coloneqq L\setminus\mathsf{C}^{-1}_{L}(x)$, the
previous result can be reformulated in terms of the operator $\alpha$
as follows: $\langle M,\mathsf{C}_{M}\rangle$ is a strong precontact
sublattice of $\langle L,\mathsf{C}_{L}\rangle$ if and only if $\mathrm{Ig}(\alpha_{M}(x))=\alpha_{L}(x)$
for each $x\in M$.

To characterize the strong precontact sublattice $\langle M,\mathsf{C}_{M}\rangle$
of a precontact lattice $\langle L,\mathsf{C}_{L}\rangle$ in terms
of the operator $\lambda$, it is necessary to adopt the dual definition
of strong precontact sublattice. Specifically, this requires replacing
conditions (C1) and (C2) with (C1) and (C3).

\end{remark}

In what follows, we provide a characterization of strong precontact
sublattice in terms of relational conditions in the associated dual
space. As mentioned earlier, the notion of a strong precontact sublattice
refines that of a precontact substructure by imposing an additional
requirement. The next proposition shows that this condition admits
a purely relational formulation in the dual setting. This result extends
the correspondence between precontact lattice and preorder relations
explored in \cite{CigLafPet} to the enriched setting of precontact
lattices.

\begin{proposition} \label{caract prec} Let $\langle L,\mathsf{C}_{L}\rangle$
be a precontact lattice and let $\langle M,\mathsf{C}_{M}\rangle$
be a precontact substructure of $\langle L,\mathsf{C}_{L}\rangle$.
Then the following conditions are equivalent: 
\begin{enumerate}
\item For each $a\in L$ and $b\in M$, if $(a,b)\notin\mathsf{C}_{L}$,
there exists $c\in M$ such that $(c,b)\notin\mathsf{C}_{M}$ and
$a\leq c$. 
\item $R_{\mathsf{C}_{L}}\circ S^{-1}_{M}\subseteq S_{M}\circ R_{\mathsf{C}_{L}}$. 
\end{enumerate}
\end{proposition}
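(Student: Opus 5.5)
The plan is to unfold both compositions of relations with the convention already used in Proposition \ref{sub dual}, where $(P',Q')\in S_{M}\circ R_{\mathsf{C}_{L}}\circ S^{-1}_{M}$ was read as: there are $F,G$ with $P'\cap M\subseteq F$, $(F,G)\in R_{\mathsf{C}_{L}}$ and $Q'\cap M\subseteq G$. Under this reading, condition (2) says the following. For all $P,F,G\in\mathrm{X}(L)$, if $P\cap M\subseteq F$ and $F\times G\subseteq\mathsf{C}_{L}$, then there is $D\in\mathrm{X}(L)$ with $P\times D\subseteq\mathsf{C}_{L}$ and $G\cap M\subseteq D$. Both directions are then proved with item (6) of Lemma \ref{propp} and the prime filter theorem.

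\emph{(1)$\Rightarrow$(2).} Fix $P,F,G$ as above. I will show that $P\times\mathrm{Fg}_{L}(G\cap M)\subseteq\mathsf{C}_{L}$. By Lemma \ref{propp}(2) it suffices to show $(p,g)\in\mathsf{C}_{L}$ for $p\in P$ and $g\in G\cap M$, since every element of $\mathrm{Fg}_{L}(G\cap M)$ lies above some such $g$ (because $G\cap M$ is closed under meets). Suppose instead that $(p,g)\notin\mathsf{C}_{L}$. By (1) there is $c\in M$ with $p\leq c$ and $(c,g)\notin\mathsf{C}_{M}$. Then $c\in P\cap M\subseteq F$, so $(c,g)\in F\times G\subseteq\mathsf{C}_{L}$. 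Since $c,g\in M$ and $M$ is a precontact substructure, this gives $(c,g)\in\mathsf{C}_{M}$, a contradiction. Now Lemma \ref{propp}(6) yields primes $P''\supseteq P$ and $D\supseteq G\cap M$ with $P''\times D\subseteq\mathsf{C}_{L}$. Hence $P\times D\subseteq\mathsf{C}_{L}$, as required.

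\emph{(2)$\Rightarrow$(1).} Argue by contradiction: take $a\in L$ and $b\in M$ with $(a,b)\notin\mathsf{C}_{L}$, and suppose every $c\in M$ with $a\leq c$ satisfies $(c,b)\in\mathsf{C}_{M}\subseteq\mathsf{C}_{L}$. Let $H=\{c\in M:a\leq c\}$. By monotonicity $\mathrm{Fg}_{L}(H)\times\mathrm{Fg}(b)\subseteq\mathsf{C}_{L}$, so Lemma \ref{propp}(6) gives primes $F\supseteq H$ and $G\ni b$ with $F\times G\subseteq\mathsf{C}_{L}$.

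The key step, and the one I expect to need the most care, is to produce a prime $P$ of $L$ with $a\in P$ and $P\cap M\subseteq F$. Since $F\cap M$ is a prime filter of $M$, the set $M\setminus F$ is an ideal of $M$, and $\mathrm{Ig}_{L}(M\setminus F)$ is the downset it generates in $L$. This ideal misses $\mathrm{Fg}(a)$: if $a\leq m$ with $m\in M\setminus F$, then $m\in H\subseteq F$, which is impossible. The Birkhoff--Stone theorem then gives a prime $P\ni a$ disjoint from $M\setminus F$, that is, $P\cap M\subseteq F$. Applying (2) to $P,F,G$ yields $D$ with $P\times D\subseteq\mathsf{C}_{L}$ and $b\in G\cap M\subseteq D$. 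Since $a\in P$, this gives $(a,b)\in\mathsf{C}_{L}$, contradicting the choice of $a$ and $b$.
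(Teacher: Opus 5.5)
Your proof is correct and follows essentially the same route as the paper. In (1)$\Rightarrow$(2) your claim $P\times\mathrm{Fg}_{L}(G\cap M)\subseteq\mathsf{C}_{L}$ is exactly the paper's $\alpha^{-1}_{L}(Q)\cap\mathrm{Fg}(D\cap M)=\emptyset$. In (2)$\Rightarrow$(1) you build the same three prime filters: one containing $\{c\in M:a\leq c\}$, one containing $b$ and $R_{\mathsf{C}_{L}}$-related to the first, and $P\ni a$ with $P\cap M$ contained in the first. The differences are cosmetic: you argue for (1) directly rather than through the ideal identity of Lemma~\ref{caract subret alpha}, and you get the first two primes at once from Lemma~\ref{propp}(6) rather than from the prime filter theorem followed by Lemma~\ref{separacion}.
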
 
\begin{proof}
Let $\langle L,\mathsf{C}_{L}\rangle$ be a precontact lattice and
let $\langle M,\mathsf{C}_{M}\rangle$ be a precontact substructure
of $\langle L,\mathsf{C}_{L}\rangle$.

\noindent$(1)\Rightarrow(2)$ Assume that for each $a\in L$ and
$b\in M$, if $(a,b)\notin\mathsf{C}_{L}$, then there exist $c\in M$
such that $(c,b)\notin\mathsf{C}_{M}$ and $a\leq c$. Let $P,Q,D\in\mathrm{X}(L)$
such that $Q\cap M\subseteq P$ and $P\times D\subseteq\mathsf{C}_{L}$.
We will show that $\alpha^{-1}_{L}(Q)\cap\mathrm{Fg}(D\cap M)=\emptyset$.
Suppose, for contradiction, that $\alpha^{-1}_{L}(Q)\cap\mathrm{Fg}(D\cap M)\neq\emptyset$.
Then there exist $x\in\alpha^{-1}_{L}(Q)$ and $b\in D\cap M$ such
that $b\leq x$. Since $\alpha^{-1}_{L}(Q)$ is an ideal and $x\in\alpha^{-1}_{L}(Q)$,
it follows that $b\in\alpha^{-1}_{L}(Q)$. Hence, there exists $a\in Q$
such that $(a,b)\notin\mathsf{C}_{L}$. This implies, by hypothesis,
that there exists $c\in M$ such that $(c,b)\notin\mathsf{C}_{M}$
and $a\leq c$. Since $a\in Q$ and $a\leq c$, it follows that $c\in Q$.
So, $c\in Q\cap M\subseteq P$. Therefore, $(c,b)\in P\times D\subseteq\mathsf{C}_{L}$.
But since $c,b\in M$ and $\langle M,\mathsf{C}_{M}\rangle$ is a
precontact substructure of $\langle L,\mathsf{C}_{L}\rangle$, it
must be the case that $(c,b)\in\mathsf{C}_{M}$, which is a contradiction.
Hence, $\alpha^{-1}_{L}(Q)\cap\mathrm{Fg}(D\cap M)=\emptyset$. Therefore,
there exists $Z\in\mathrm{X}(L)$ such that $(Q,Z)\in R_{\mathsf{C}_{L}}$
and $(Z,D)\in S_{M}$, so we conclude that $R_{\mathsf{C}_{L}}\circ S^{-1}_{M}\subseteq S_{M}\circ R_{\mathsf{C}_{L}}$.

$(2)\Rightarrow(1)$. Assume that $R_{\mathsf{C}_{L}}\circ S^{-1}_{M}\subseteq S_{M}\circ R_{\mathsf{C}_{L}}$.
According to Lemma \ref{caract subret alpha}, proving that $\langle M,\mathsf{C}_{M}\rangle$
is a strong precontact sublattice of $\langle L,\mathsf{C}_{L}\rangle$
is equivalent to proving that $\mathrm{Ig}_{L}(M\setminus\mathsf{C}^{-1}_{L}(b))=L\setminus\mathsf{C}^{-1}_{L}(b)$
for all $b\in M$. Therefore, we will focus on proving this equality.
Let $b\in M$. Clearly, $\mathrm{Ig}_{L}(M\setminus\mathsf{C}^{-1}_{L}(b))\subseteq L\setminus\mathsf{C}^{-1}_{L}(b)$.
Suppose the converse inclusion fails. Then there exists $x\in L\setminus\mathsf{C}^{-1}_{L}(b)$
such that $x\nleq y$ for all $y\in M\setminus\mathsf{C}^{-1}_{L}(b)$.
This implies that $\mathrm{Fg}(\mathrm{Fg}(x)\cap M)\cap(L\setminus\mathsf{C}^{-1}_{L}(b))=\emptyset$.
By the Prime Filter Theorem, there exists $D\in\mathrm{X}(L)$ such
that $\mathrm{Fg}(\mathrm{Fg}(x)\cap M)\subseteq D$ and $(L\setminus\mathsf{C}^{-1}_{L}(b))\cap D=\emptyset$.
Hence, $b\notin\alpha^{-1}_{L}(D)$. Then, by Lemma \ref{separacion},
there exists $Q\in\mathrm{X}(L)$ such that $(D,Q)\in R_{\mathsf{C}_{L}}$
and $b\in Q$. We now show that $\mathrm{Fg}(x)\cap\mathrm{Ig}(M\cap D^{c})=\emptyset$.
Suppose the intersection is nonempty. Then there exists $t\in L$
such that $x\leq t\leq w$ for some $w\in M\cap D^{c}$. Thus, $w\in\mathrm{Fg}(\mathrm{Fg}(x)\cap M)$,
but $w\notin D$, contradicting $\mathrm{Fg}(\mathrm{Fg}(x)\cap M)\subseteq D$.
Therefore, $\mathrm{Fg}(x)\cap\mathrm{Ig}(M\cap D^{c})=\emptyset$.
Again, by the Prime Filter Theorem, there exists $P\in\mathrm{X}(L)$
such that $x\in P$ and $P\cap M\subseteq D$. Hence, $(P,D)\in S^{-1}_{M}$.
Since $(D,Q)\in R_{\mathsf{C}_{L}}$, it follows that $(P,Q)\in R_{\mathsf{C}_{L}}\circ S^{-1}_{M}\subseteq S_{M}\circ R_{\mathsf{C}_{L}}$.
Then there exists $Z\in\mathrm{X}(L)$ such that $(P,Z)\in R_{\mathsf{C}_{L}}$
and $Q\cap M\subseteq Z$. Since $b\in Q\cap M$, we have $b\in Z$,
and since $x\in P$, it follows that $(x,b)\in\mathsf{C}_{L}$, contradicting
the assumption. We conclude that $L\setminus\mathsf{C}^{-1}_{L}(b)=\mathrm{Ig}_{L}((L\setminus\mathsf{C}^{-1}_{L}(b))\cap M)$.
Thus, there exists $c\in M$ such that $(c,b)\notin\mathsf{C}_{M}$
and $a\leq c$, as desired. 
\end{proof}

\begin{corollary}\label{corresp} Let $\langle L,\mathsf{C}\rangle$
be a precontact lattice. The correspondence $M\mapsto S_{M}$ establishes
an anti-isomorphism from the lattice of strong precontact sublattices
of $L$ onto the lattice of lattice preorders of ${\rm {X}(L)}$ such
that $R_{\mathsf{C}_{L}}\circ S^{-1}_{M}\subseteq S_{M}\circ R_{\mathsf{C}_{L}}$.
\end{corollary}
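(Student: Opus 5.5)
The plan is to restrict the known anti-isomorphism of \cite{CigLafPet} between bounded sublattices of $L$ and lattice preorders on $\mathrm{X}(L)$. Given a bounded sublattice $M$ we have $S_M$; given a lattice preorder $R$ we have $M_R=\{U\in\mathrm{D}(\mathrm{X}(L)):R^{-1}(U)\subseteq U\}$, transported to $L$ through $\beta$. These maps are mutually inverse and both reverse inclusion. It therefore suffices to show that the correspondence carries strong precontact sublattices exactly onto those lattice preorders $S$ satisfying $R_{\mathsf{C}_L}\circ S^{-1}\subseteq S\circ R_{\mathsf{C}_L}$. Throughout, a strong precontact sublattice is a bounded sublattice $M$ carrying the induced relation $\mathsf{C}_M=\mathsf{C}_L\cap(M\times M)$, so that (C1) holds automatically and $M$ is determined by its underlying set.

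The key steps are as follows.

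First, if $M$ is a strong precontact sublattice, then it is in particular a precontact substructure satisfying (C2). Proposition~\ref{caract prec}, $(1)\Rightarrow(2)$, then gives $R_{\mathsf{C}_L}\circ S^{-1}_M\subseteq S_M\circ R_{\mathsf{C}_L}$, so $M\mapsto S_M$ lands in the prescribed set.

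Second, suppose $S$ is a lattice preorder with $R_{\mathsf{C}_L}\circ S^{-1}\subseteq S\circ R_{\mathsf{C}_L}$. By \cite{CigLafPet}, $S=S_M$ for the bounded sublattice $M=\beta^{-1}[M_S]$. Equipping $M$ with the restricted relation $\mathsf{C}_M=\mathsf{C}_L\cap(M\times M)$ makes it a precontact substructure, since the restriction of a precontact relation is a precontact relation, as noted before the definition of substructure. Proposition~\ref{caract prec}, $(2)\Rightarrow(1)$, then yields (C2), so $M$ is a strong precontact sublattice whose image is $S$. This proves surjectivity.

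Third, injectivity and the order-reversing property in both directions are inherited verbatim from the sublattice/preorder anti-isomorphism. The strong precontact sublattices are ordered by inclusion of underlying sets, since their relations are the induced ones.

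The main obstacle is justifying the word ``lattice'' on both sides, that is, showing that the two subfamilies are closed under the relevant meets and joins, or at least that they are lattices under inclusion so that an order anti-isomorphism between them is a lattice anti-isomorphism. Since a bijection that reverses order in both directions automatically preserves whatever suprema and infima exist, I would argue that it suffices to show the strong precontact sublattices form a lattice.

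For meets I would try to show that an intersection $M_1\cap M_2$ of strong precontact sublattices again satisfies (C2). This is not obvious, because the witness $c$ must lie in both $M_1$ and $M_2$. The alternative is to define the meet as the largest strong precontact sublattice contained in $M_1\cap M_2$, obtained as the join of all such. For that I would verify that the sublattice generated by a union of strong precontact sublattices satisfies (C2), using the ideal characterization of Lemma~\ref{caract subret alpha}: $\mathrm{Ig}(\alpha_M(x))=\alpha_L(x)$. The delicate point is that $x$ ranges over the generated sublattice and not only over the generators. For $x=x_1\vee x_2$ I would use $\alpha_L(x_1\vee x_2)=\alpha_L(x_1)\cap\alpha_L(x_2)$ together with distributivity of ideal intersection. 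The meet case $x_1\wedge x_2$ has no such formula and is where I expect difficulty. If it fails, I would state the result as a dual order-isomorphism of posets, which is all the two directions of Proposition~\ref{caract prec} genuinely deliver.
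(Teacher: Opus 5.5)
Your treatment of the bijection matches the paper's intended argument. The paper states the corollary without proof, as an immediate consequence of the anti-isomorphism of \cite{CigLafPet} and the two directions of Proposition~\ref{caract prec}, and your first three steps carry this out correctly. The remaining gap is the one you flagged, and it cannot be closed: in general neither family is a lattice under inclusion, and both of your proposed constructions fail.

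\textbf{Meets.} Let $L$ be the chain $0<a_1<a_2<\cdots<1$ with $\mathsf{C}=\{(1,y):y\neq 0\}$. This is $\mathsf{C}_N$ for the generalized negation $N(0)=L$ and $N(x)=L\setminus\{1\}$ for $x\neq 0$. For every $b\neq 0$, $\alpha_L(b)=L\setminus\{1\}$ is a non-principal ideal, so a bounded sublattice $M$ satisfies (C2) iff it contains infinitely many $a_n$. Take $M_1=\{0,1\}\cup\{a_{2n}:n\geq 1\}$ and $M_2=\{0,1\}\cup\{a_{2n-1}:n\geq 1\}$; both are strong precontact sublattices. The only bounded sublattice inside $M_1\cap M_2=\{0,1\}$ is $\{0,1\}$ itself, and it violates (C2) with $a=a_1$, $b=1$. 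So $M_1$ and $M_2$ have no common strong lower bound at all. The intersection is not strong, and there is no ``largest strong sublattice inside $M_1\cap M_2$'' to take. Correspondingly, $S_{M_1}$ and $S_{M_2}$ have no upper bound among the admissible preorders.

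\textbf{Joins.} The sublattice generated by a union can fail (C2) even for finite $L$. Let $W=\{1,2,3\}$, $L=\mathcal{P}(W)$, and take $\mathsf{C}_R$ with $R=\{(1,1),(1,3),(2,2),(3,2)\}$. Each $\alpha_L(V)$ is the principal ideal generated by $t(V)=\{w\in W:R(w)\cap V=\emptyset\}$, so $M$ is strong iff $t[M]\subseteq M$. Here $M_1=\{\emptyset,\{1,2\},W\}$ and $M_2=\{\emptyset,\{2,3\},W\}$ are strong, because $t$ sends $\{1,2\}$, $\{2,3\}$ and $W$ to $\emptyset$. Yet the sublattice they generate contains $\{2\}$ but not $t(\{2\})=\{1\}$. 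This is exactly the $x_1\wedge x_2$ case you were worried about.

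What Proposition~\ref{caract prec} and \cite{CigLafPet} actually yield is your fallback: an inclusion-reversing bijection of posets. It becomes a lattice anti-isomorphism whenever one side is a lattice. For instance, if every $\alpha_L(b)$ is principal (as for finite $L$), the strong sublattices are exactly those closed under $b\mapsto\max\alpha_L(b)$. They are then closed under intersections and form a complete lattice. The paper's wording glosses over this, so you should commit to the poset version rather than try to prove the lattice claim.
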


We remember that every lattice with a negation operator $\langle L,\neg\rangle$
induces a precontact lattice $\langle L,\mathsf{C}_{\neg}\rangle$
(see Example \ref{ex neg}). Recall that a sublattice $M$ of a lattice
with a negation operator $\langle L,\neg\rangle$ is a $(0,1)$-sublattice
$M$ that is closed under the negation operator, i.e., for all $a\in M$,
we have $\neg a\in M$. In the following lemma, we establish a correspondence
between sublattices of a lattice with a negation operator $\langle L,\neg\rangle$
and strong precontact sublattice of the associated precontact lattice
$\langle L,\mathsf{C}_{\neg}\rangle$.

\begin{lemma}\label{sublat} Let $\langle L,\neg\rangle$ be a lattice
with a negation operator and let $M$ be a $(0,1)$-sublattice of
$L$. Then the following conditions are equivalent: 
\begin{enumerate}
\item $M$ is a sublattice of $\langle L,\neg\rangle$. 
\item $\langle M,\mathsf{C}_{M}\rangle$ is a strong precontact sublattice
of $\langle L,\mathsf{C}_{\neg}\rangle$ where $\mathsf{C}_{M}=\mathsf{C}_{\neg}\cap(M\times M)$. 
\end{enumerate}
\end{lemma}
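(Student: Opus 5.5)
Throughout I write $\mathsf{C}$ for $\mathsf{C}_{\neg}$, so that $(a,b)\in\mathsf{C}$ iff $a\nleq\neg b$. Consequently, for each $b\in L$,
\[
L\setminus\mathsf{C}^{-1}(b)=\{a\in L:a\leq\neg b\}=(\neg b].
\]
Condition (C1) holds by the very definition of $\mathsf{C}_{M}$, so $\langle M,\mathsf{C}_{M}\rangle$ is always a precontact substructure. The whole lemma therefore reduces to showing that $M$ is closed under $\neg$ if and only if condition (C2) holds. I would argue each direction directly from this description of $\mathsf{C}^{-1}(b)$, mirroring the modal remark given before the inclusion-map theorem.

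For $(1)\Rightarrow(2)$, let $a\in L$ and $b\in M$ with $(a,b)\notin\mathsf{C}$, so that $a\leq\neg b$. Take $c=\neg b$; it lies in $M$ by hypothesis, and $a\leq c$. It remains to check $(c,b)\notin\mathsf{C}_{M}$. Since $c\leq\neg b$ trivially, we get $(c,b)\notin\mathsf{C}$, and hence $(c,b)\notin\mathsf{C}_{M}$ because $\mathsf{C}_{M}\subseteq\mathsf{C}$. Thus (C2) holds.

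For $(2)\Rightarrow(1)$, let $b\in M$. Since $\neg b\leq\neg b$, we have $(\neg b,b)\notin\mathsf{C}$. Condition (C2), applied with $a=\neg b\in L$, yields some $c\in M$ with $\neg b\leq c$ and $(c,b)\notin\mathsf{C}_{M}$. Because $c,b\in M$ and $\mathsf{C}_{M}=\mathsf{C}\cap(M\times M)$, this gives $(c,b)\notin\mathsf{C}$, i.e.\ $c\leq\neg b$. Therefore $c=\neg b$, so $\neg b\in M$.

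Alternatively, both directions follow from Lemma \ref{caract subret alpha} in the form $\mathrm{Ig}(\alpha_{M}(x))=\alpha_{L}(x)=(\neg x]$. If $\neg x\in M$, this is the principal ideal generated by the element $\neg x$ of $\alpha_{M}(x)$. Conversely, if the equality holds, then $\neg x\leq m$ for some $m\in\alpha_{M}(x)\subseteq(\neg x]$, which forces $\neg x=m\in M$.

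I expect no real obstacle. The only point requiring care is to use (C1) correctly when transferring non-contact between $\mathsf{C}_{M}$ and $\mathsf{C}$.
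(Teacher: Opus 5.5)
Your proof is correct and follows the paper's argument essentially step for step. For $(1)\Rightarrow(2)$ you take the witness $c=\neg b$, and for $(2)\Rightarrow(1)$ you apply (C2) to $(\neg b,b)$ and use (C1) to squeeze $\neg b\leq c\leq\neg b$; your alternative via Lemma~\ref{caract subret alpha}, using $\alpha_{L}(x)=(\neg x]$, is also valid but not needed.
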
 
\begin{proof}
$(1)\Rightarrow(2)$ Suppose that $M$ is a sublattice of $\langle L,\neg\rangle$.
Let $\mathsf{C}_{M}=\mathsf{C}_{\neg}\cap(M\times M)$. Let $a\in L$
and $b\in M$ such that $(a,b)\notin\mathsf{C}_{\neg}$. Thus, by
definition of $\mathsf{C}_{\neg}$, $a\leq\neg b$. Since $M$ is
a sublattice of $\langle L,\neg\rangle$, it follows that $\neg b\in M$.
Moreover, $(\neg b,b)\notin\mathsf{C}_{M}$. Consequently, $M$ is
a strong precontact sublattice of $\langle L,\mathsf{C}_{\neg}\rangle$.

$(2)\Rightarrow(1)$ Suppose that $\langle M,\mathsf{C}_{M}\rangle$
is a strong precontact sublattice of $\langle L,\mathsf{C}_{\neg}\rangle$,
where $\mathsf{C}_{M}=\mathsf{C}_{\neg}\cap(M\times M)$. Let $x\in M$.
We prove that $\neg x\in M$. Since$(\neg x,x)\notin\mathsf{C}_{\neg}$
and $\langle M,\mathsf{C}_{M}\rangle$ is a strong precontact sublattice
of $\langle L,\mathsf{C}_{\neg}\rangle$, there exists $t\in M$ such
that $(t,x)\notin\mathsf{C}_{M}$ and $\neg x\leq t$. As $(t,x)\notin\mathsf{C}_{M}$
and $(t,x)\in M\times M$, it follows that $(t,x)\notin\mathsf{C}_{\neg}$,
since $\mathsf{C}_{M}=\mathsf{C}_{\neg}\cap(M\times M)$. Thus$t\leq\neg x$.
Therefore $\neg x=t\in M$. 
\end{proof}

\section{Congruences of precontact lattices}

\label{sec:Congruences-of-precontact}

In universal algebra, a congruence on an algebra is an equivalence
relation compatible with the algebraic operations. However, when the
algebra is equipped with additional relational structure---such as
a precontact relation---this compatibility alone does not suffice
to ensure that the corresponding quotient structure is well defined.

We now introduce a notion of lattice congruence that is compatible
with the precontact relation, in a sense to be made precise later.
This notion guarantees that the quotient lattice inherits a natural
precontact structure. We also examine the cases in which the precontact
relation is modally definable in the Boolean setting and negationally
definable in the lattice setting.

In the study of the precontact congruences we will take into account
that, if $L$ is a bounded distributive lattice and $\theta\subseteq L\times L$
is a lattice congruence, then $\theta$ can be expressed in terms
of closed subsets of the topology $\mathcal{T}_{X(L)}$. More precisely,
every lattice congruence $\theta\subseteq L\times L$ has associated
a closed subset $Y$ of ${\rm {X}(L)}$ such that $\theta=\theta(Y)$,
where $\theta(Y)$ is defined by: 
\[
(a,b) \in\theta(Y) \iff\varphi(a) \cap Y = \varphi(b) \cap Y.
\]

\begin{definition} Let $\langle L,\mathsf{C}\rangle$ be a precontact
lattice, and let $\theta$ be a lattice congruence on $L$. We define
a relation $\hat{\theta}\subseteq\mathcal{P}(L)\times\mathcal{P}(L)$
by 
\[
\langle U, V\rangle\in\hat{\theta}\iff\text{for every } x \in U \text{ there exists } y \in V \text{ such that } (x,y)\in\theta.
\]
\end{definition}

We proceed to formally introduce the notion of a precontact congruence
in a precontact lattice $\langle L,\mathsf{C}\rangle$. For this purpose,
we use the notation $\alpha(a)=\mathsf{C}^{-1}(a)^{c}$ for each $a\in L$.

\begin{definition} Let $\langle L,\mathsf{C}\rangle$ be a precontact
lattice, and let $\theta$ be a lattice congruence of $L$. We will
say that $\theta$ is a precontact congruence if for every $(a,b)\in\theta$,
it holds that $\langle\alpha(a),\alpha(b)\rangle\in\hat{\theta}$.
\end{definition}

It is important to note that, if formulated in terms of the operator
$\lambda$ instead of $\alpha$, one obtains the dual notion of precontact
congruence. In this work, we adopt the $\alpha$--based formulation
to define and develop all notions related to precontact congruences.
For each result stated using $\alpha$, a dual statement follows by
working with $\lambda$, with analogous proofs. When the relation
$\mathsf{C}$ is symmetric, the two notions coincide.

Given a bounded distributive lattice $L$, it is known that the structure
\[
\mathbf{Con}\,L=\langle\mathbf{Con}\,L,\lor,\land,i_{L},L\times L\rangle
\]
is a lattice, where $\mathbf{Con}\,L$ denotes the family of all lattice
congruences on $L$, $i_{L}$ is the identity congruence on $L$,
and $L\times L$ is the total congruence. Moreover, the quotient structure
$\langle L/\theta,\vee_{\theta},\wedge_{\theta},1_{\theta},0_{\theta}\rangle$
is a bounded distributive lattice, where $L/\theta=\{x_{\theta}:x\in L\}$,
and that the canonical mapping $q\colon L\to L/\theta$ given by $q(a)=a_{\theta}$
is a lattice homomorphism.

Now, if $\langle L,\mathsf{C}\rangle$ is a precontact lattice, let
$\mathbf{Con}_{p}\,L$ denote the set of all precontact congruences
of $\langle L,\mathsf{C}\rangle$. Then it is easy to see that 
\[
\mathbf{Con}_{p}\,L=\langle\mathbf{Con}_{p}\,L,\lor,\land,i_{L},L\times L\rangle
\]
is a sublattice of $\mathbf{Con}\,L$. For each $\theta\in\mathbf{Con}_{p}\,L$,
we define a binary relation $\mathsf{C}_{\theta}$ on $L/\theta$
by 
\[
\left(x_{\theta},y_{\theta}\right)\text{ iff }y_{\theta}\notin\mathrm{Ig}\left(\left\{ a_{\theta}:(x,a)\notin\mathsf{C}\right\} \right).
\]
One can then verify that $\langle L/\theta,\vee_{\theta},\wedge_{\theta},\mathsf{C}_{\theta},1_{\theta},0_{\theta}\rangle$
is a precontact lattice and that the canonical mapping $q\colon\langle L,\mathsf{C}\rangle\to\langle L/\theta,\mathsf{C}_{\theta}\rangle$
is a strong precontact homomorphism.

It is a classical result that the kernel of a lattice homomorphism
determines a lattice congruence. More precisely, if $h\colon L_{1}\to L_{2}$
is a lattice homomorphism, then the relation $\ker(h)=\left\{ (a,b)\in L_{1}\times L_{2}:h(a)=h(b)\right\} $
is a lattice congruence on $L_{1}$. We now show that an analogous
result holds in the context of strong precontact homomorphisms.

\begin{theorem} Let $h\colon(L_{1},\mathsf{C}_{1})\to(L_{2},\mathsf{C}_{2})$
be a strong precontact homomorphism between precontact lattices. Then
the relation $\ker(h)$ is a precontact congruence on $(L_{1},\mathsf{C}_{1})$.
\end{theorem}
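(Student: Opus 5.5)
We need to show: if $h(a)=h(b)$, then $\langle\alpha_1(a),\alpha_1(b)\rangle\in\hat{\theta}$, where $\theta=\ker(h)$. Unwinding the definitions, this means: for every $x\in L_1$ with $(x,a)\notin\mathsf{C}_1$, there is some $y\in L_1$ with $(y,b)\notin\mathsf{C}_1$ and $h(x)=h(y)$. Since $\ker(h)$ is already a lattice congruence, this is the only thing left to prove.

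The plan is to pass to $L_2$, move from $a$ to $b$ there, and come back to $L_1$ using (CH2).

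\begin{enumerate}
\item Take $x\in\alpha_1(a)$, i.e. $(x,a)\notin\mathsf{C}_1$.
\item Use (CH1) in contrapositive form to get $(h(x),h(a))\notin\mathsf{C}_2$.
\item Since $h(a)=h(b)$, this gives $(h(x),h(b))\notin\mathsf{C}_2$.
\item Apply (CH2) with the element $h(x)\in L_2$ and $b\in L_1$. This yields $c\in L_1$ with $(c,b)\notin\mathsf{C}_1$ and $h(x)\le h(c)$.
\end{enumerate}

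The element $c$ does not yet work as a witness, because we only know $h(x)\le h(c)$, not $h(x)=h(c)$. The fix is to shrink $c$ and take $y=x\wedge c$.

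\begin{enumerate}
\item By Lemma~\ref{propp}(4), $\alpha_1(b)=L_1\setminus\mathsf{C}_1^{-1}(b)$ is an ideal of $L_1$. Equivalently, by the monotonicity in Lemma~\ref{propp}(2), $y\le c$ and $(c,b)\notin\mathsf{C}_1$ give $(y,b)\notin\mathsf{C}_1$. So $y\in\alpha_1(b)$.
\item Since $h$ is a lattice homomorphism, $h(y)=h(x)\wedge h(c)=h(x)$. Hence $(x,y)\in\ker(h)$.
\end{enumerate}

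This gives $\langle\alpha_1(a),\alpha_1(b)\rangle\in\hat{\theta}$, so $\ker(h)$ is a precontact congruence. Symmetry is automatic here: the argument applies equally to the pair $(b,a)$, although the definition only asks for $(a,b)\in\theta$, which already covers both orders.

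The main obstacle was that (CH2) only produces an upper bound $h(x)\le h(c)$. Meeting with $x$, together with the ideal property of $\alpha_1(b)$, turns that bound into an equality.
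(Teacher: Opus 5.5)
Your proof is correct and follows the paper's argument step for step. You use (CH1) to carry $(x,a)\notin\mathsf{C}_1$ to $(h(x),h(a))\notin\mathsf{C}_2$, replace $h(a)$ by $h(b)$, pull back a witness $t$ with $(t,b)\notin\mathsf{C}_1$ and $h(x)\le h(t)$ via (CH2), and then take $y=x\wedge t$, which lies in the ideal $\alpha_1(b)$ and satisfies $h(y)=h(x)$.
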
 
\begin{proof}
Let $h\colon(L_{1},\mathsf{C}_{1})\to(L_{2},\mathsf{C}_{2})$ be a
strong precontact homomorphism between precontact lattices. Let $(a,b)\in\ker(h)$,
i.e., $h(a)=h(b)$, and suppose $x\in\mathsf{C}^{-1}_{1}(a)^{c}$.
Then $(x,a)\notin\mathsf{C}_{1}$, so by the definition of a precontact
homomorphism, $(h(x),h(a))\notin\mathsf{C}_{2}$. Since $h(a)=h(b)$,
it follows that $(h(x),h(b))\notin\mathsf{C}_{2}$. As $h$ is a strong
precontact homomorphism, there exists $t\in\mathsf{C}^{-1}_{1}(b)^{c}$
such that $h(x)\leq h(t)$. Let $y=x\wedge t$. Then $y\in\mathsf{C}^{-1}_{1}(b)^{c}$,
since the set $\mathsf{C}^{-1}_{1}(b)^{c}$ is an ideal, and moreover
$h(y)=h(x\wedge t)=h(x)$. Thus, $(x,y)\in\ker(h)$. This proves that
for any $(a,b)\in\ker(h)$ and any $x\in\mathsf{C}^{-1}_{1}(a)^{c}$,
there exists $y\in\mathsf{C}^{-1}_{1}(b)^{c}$ such that $(x,y)\in\ker(h)$.
Hence $\ker(h)$ is a precontact congruence. 
\end{proof}

Below we will show that the notion of contact congruence generalizes
both the notion of congruence in modal algebras and the notion of
congruence in lattices with negation.

\begin{proposition} Let $L$ be a bounded distributive lattice. Let
$\theta$ be a lattice congruence of $L$. 
\begin{enumerate}
\item If $(L,\square)$ is a modal Boolean algebra and $\theta$ is a $\square$-congruence,
then $\theta$ is a precontact congruence of $(L,\mathsf{C}_{\square})$. 
\item If $(L,\neg)$ is a lattice with a negation operator and $\theta$
is a $\neg$-congruence, then $\theta$ is a precontact congruence
of $(L,\mathsf{C}_{\neg})$. 
\end{enumerate}
\end{proposition}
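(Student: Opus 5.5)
The plan is to verify the definition of precontact congruence directly in both cases. In each case I will produce, for every $(a,b)\in\theta$ and every $x\in\alpha(a)$, an element $y\in\alpha(b)$ with $(x,y)\in\theta$. The natural witness is obtained by meeting $x$ with the "canonical" element of $\alpha(b)$: $\square\neg b$ in the modal case and $\neg b$ in the negation case. This mirrors the argument used above for $\ker(h)$, where $y=x\wedge t$ was taken.

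\emph{Modal case.} First, compute $\alpha(a)=\mathsf{C}_{\square}^{-1}(a)^{c}=\{x\in L:(x,a)\notin\mathsf{C}_{\square}\}=\{x:x\leq\square\neg a\}$. Thus $\alpha(a)$ is the principal ideal $(\square\neg a]$. Next, let $(a,b)\in\theta$. Since $\theta$ is a Boolean congruence, $(\neg a,\neg b)\in\theta$. Since $\theta$ is a $\square$-congruence, $(\square\neg a,\square\neg b)\in\theta$.

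Now take $x\in\alpha(a)$, so $x\leq\square\neg a$, that is, $x=x\wedge\square\neg a$. Put $y=x\wedge\square\neg b$. Then $y\leq\square\neg b$, so $y\in\alpha(b)$. By compatibility of $\theta$ with $\wedge$, we get $(x\wedge\square\neg a,\,x\wedge\square\neg b)\in\theta$, that is, $(x,y)\in\theta$. Hence $\langle\alpha(a),\alpha(b)\rangle\in\hat\theta$.

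\emph{Negation case.} The argument is the same. Here $\alpha(a)=\{x:x\leq\neg a\}=(\neg a]$. From $(a,b)\in\theta$ and $\theta$ being a $\neg$-congruence, we get $(\neg a,\neg b)\in\theta$. For $x\leq\neg a$, take $y=x\wedge\neg b\in(\neg b]$. Then $(x,y)=(x\wedge\neg a,\,x\wedge\neg b)\in\theta$.

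There is no real obstacle here. The only points needing care are the correct identification of $\alpha$ as a principal ideal in each case, which follows from the definitions in Examples \ref{ex neg} and \ref{modal-prec-unified}, and the use of the fact that Boolean congruences preserve $\neg$ in the modal case.
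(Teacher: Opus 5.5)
Your proof is correct and follows the paper's route: for $x\in\alpha(a)$ you take the witness $y=x\wedge\square\neg b$ (respectively $y=x\wedge\neg b$), using that $\theta$ is compatible with $\neg$, $\square$ and $\wedge$. Your version of the modal case is the accurate one, since $(x,a)\notin\mathsf{C}_{\square}$ means $x\leq\square\neg a$, whereas the paper's text writes $\square a$ and $\square b$. You also carry out the negation case explicitly, which the paper leaves to the reader.
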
 
\begin{proof}
$(1)$ Suppose that $(L,\square)$ is a modal Boolean algebra and
$\theta$ is a $\square$-congruence. Let $a,b\in B$ be such that
$(a,b)\in\theta$ and $(x,a)\notin\mathsf{C}_{\square}$. By the definition
of $\mathsf{C}_{\square}$, we have $x\leq\square a$. Since $(a,b)\in\theta$
and $\theta$ is a Boolean congruence, $(\square a,\,\square b)\in\theta$.
Also, $(x\wedge\square a,\,x\wedge\square b)=(x,x\wedge\square b)\in\theta$
and $(x\wedge\square b,b)\notin\mathsf{C}_{\square}$. Therefore,
there exists $y=x\wedge\square b$ such that $(y,b)\notin\mathsf{C}_{\square}$
and $(x,y)\in\theta$. Hence $\theta$ is a precontact congruence.

$(2)$ The proof follows the same idea as in $(1)$. We leave it to
the reader. 
\end{proof}

Having introduced the notion of precontact congruence as a lattice
congruence that, in an appropriate sense, preserves the precontact
structure, we now aim to investigate whether this condition admits
a topological characterization

Recall that, under Priestley duality, every lattice congruence $\theta$
is associated with a closed subset $Y\subseteq\mathsf{X}(L)$, in
such a way that $(a,b)\in\theta$ if and only if $\beta(a)\cap Y=\beta(b)\cap Y$.
Given this correspondence, it is natural to ask whether the additional
requirement of precontact compatibility---namely, that  $\langle\alpha(a),\alpha(b)\rangle\in\hat{\theta}$---can
also be understood in topological terms. To this end, recall that
for each ideal $I\subseteq L$, $\varphi(I)=\{P\in\mathsf{X}(L):P\cap I\neq\emptyset\}$
is an open upset in the Priestley space $\mathsf{X}(L)$.

\begin{proposition} Let $\langle L,\mathsf{C}\rangle$ be a precontact
lattice. Let $Y$ be a closed subset of ${\rm {X}(L)}$ such that
$(a,b)\in\theta(Y)$. Then $\varphi(\alpha(a))\cap Y=\varphi(\alpha(b))\cap Y$
iff for each $x\in\alpha(a)$ exists $y\in\alpha(b)$ such that $(x,y)\in\theta(Y)$.
\end{proposition}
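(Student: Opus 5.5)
The plan is to work entirely inside $Y$ and use compactness. Throughout, write $\beta(x)$ for the clopen upset associated to $x$ (the paper's $\varphi(x)$ in the definition of $\theta(Y)$). Recall two facts from the Preliminaries. First, $\varphi(I)=\bigcup\{\beta(c):c\in I\}$ for every ideal $I$. Second, $\alpha(a),\alpha(b)$ are ideals by item (4) of Lemma \ref{propp}. The core of the argument is the one-sided equivalence
\[
\varphi(\alpha(a))\cap Y\subseteq\varphi(\alpha(b))\cap Y\iff\forall x\in\alpha(a)\ \exists y\in\alpha(b)\ \bigl((x,y)\in\theta(Y)\bigr),
\]
and the stated equality is then obtained from it.

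\emph{Inclusion $\Rightarrow$ condition.} Fix $x\in\alpha(a)$. Then
\[
\beta(x)\cap Y\subseteq\varphi(\alpha(a))\cap Y\subseteq\bigcup\{\beta(c):c\in\alpha(b)\}.
\]
Since $Y$ is closed in the compact space $\mathrm{X}(L)$ and $\beta(x)$ is clopen, the set $\beta(x)\cap Y$ is compact. Hence finitely many $c_1,\dots,c_n\in\alpha(b)$ cover it. Put $c=c_1\vee\dots\vee c_n$; this lies in $\alpha(b)$ because $\alpha(b)$ is an ideal, and $\beta(x)\cap Y\subseteq\beta(c)$. Now set $y=x\wedge c$. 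Then $y\in\alpha(b)$ because ideals are downsets, and
\[
\beta(y)\cap Y=\beta(x)\cap\beta(c)\cap Y=\beta(x)\cap Y,
\]
so $(x,y)\in\theta(Y)$. The compactness step is the only non-routine point, and the meet with $x$ is the trick that turns ``$\beta(x)\cap Y\subseteq\beta(c)$'' into an actual $\theta(Y)$-pair.

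\emph{Condition $\Rightarrow$ inclusion.} Let $P\in\varphi(\alpha(a))\cap Y$, and pick $x\in P\cap\alpha(a)$. By the condition, there is $y\in\alpha(b)$ with $\beta(x)\cap Y=\beta(y)\cap Y$. Since $P\in\beta(x)\cap Y$, we get $y\in P$, so $P\in\varphi(\alpha(b))\cap Y$.

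\emph{Passing to the equality.} The forward direction of the proposition follows immediately from the first step, since equality gives the inclusion. For the backward direction, the argument above yields only $\subseteq$. The reverse inclusion requires the same condition for the pair $(b,a)$, which also lies in $\theta(Y)$ by symmetry. Applying the second step to $(b,a)$ then gives $\supseteq$.

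\emph{Main obstacle.} This reverse inclusion is where I expect the real difficulty, and I believe it cannot be extracted from the one-sided hypothesis for the fixed pair alone. Consider the chain $0<c<1$ with $Y=\{\{1\}\}$ and a precontact relation such that $\alpha(c)=\{0\}$ and $\alpha(0)=L$. Then $(c,0)\in\theta(Y)$ and the one-sided condition holds trivially. However, $\varphi(\alpha(c))\cap Y=\emptyset$ while $\varphi(\alpha(0))\cap Y=Y$. So I would present the proposition with the compatibility condition read as holding for all pairs of $\theta(Y)$ (as in the definition of precontact congruence, which is how the result is used next). Under that reading, the symmetric application to $(b,a)$ closes the argument.
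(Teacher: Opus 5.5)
Your proof is correct, and on every part that is actually provable it is the paper's proof. The forward direction uses compactness of $\beta(x)\cap Y$, joins finitely many elements of $\alpha(b)$ into $z$, and takes $y=x\wedge z$; the pointwise argument gives $\varphi(\alpha(a))\cap Y\subseteq\varphi(\alpha(b))\cap Y$.

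Your counterexample is also valid:
\begin{itemize}
\item On $0<c<1$, the only precontact relation with $\alpha(c)=\{0\}$ is the overlap relation $\mathsf{O}$.
\item For $Y=\{\{1\}\}$ one has $(c,0)\in\theta(Y)$, and the condition for $(c,0)$ holds with $y=0$.
\item Yet $\varphi(\alpha(c))\cap Y=\emptyset\neq Y=\varphi(\alpha(0))\cap Y$.
\end{itemize}
So the backward direction fails for a single fixed pair. The paper's proof breaks exactly where you said: it obtains the reverse inclusion from \emph{$(a,b)\in\theta(Y)$ and $\theta(Y)$ is symmetric}. That yields $(b,a)\in\theta(Y)$, but not the needed $\langle\alpha(b),\alpha(a)\rangle\in\hat{\theta}(Y)$.

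Your one-sided equivalence is the right repair. It suffices for Corollary~\ref{caract congruencia}, where the condition is quantified over all pairs of the symmetric relation $\theta(Y)$.
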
 
\begin{proof}
Suppose that $\varphi(\alpha(a))\cap Y=\varphi(\alpha(b))\cap Y$.
Let $x\in\alpha(a)$. Thus 
\[
\begin{aligned}\beta(x)\cap Y & \subseteq\bigcup_{x\in\alpha(a)}(\beta(x)\cap Y)=\varphi(\alpha(a))\cap Y=\varphi(\alpha(b))\cap Y\\
 & =\bigcup_{y\in\alpha(b)}(\beta(y)\cap Y)\subseteq\bigcup_{y\in\alpha(b)}\beta(y).
\end{aligned}
\]

Since $\beta(x)\cap Y$ is a closed subset of ${\rm {X}}(L)$, then
by compactness there are $y_{1},\ldots,y_{n}\in\alpha(b)$ such that
$\beta(x)\cap Y\subseteq\beta(y_{1})\cup\ldots\cup\beta(y_{n})$.
So $\beta(x)\cap Y\subseteq\beta(z)\cap Y$, where $z=y_{1}\vee\ldots\vee y_{n}$.
Hence $\beta(x)\cap Y\cap\beta(z)=\beta(x\wedge z)\cap Y=\beta(x)\cap Y$.
Let $y=x\wedge z$. Then $y\in\alpha(b)$, because $y\leq z$ and
$z\in\alpha(b)$ which is an ideal. Therefore, there exists $y\in\alpha(b)$
such that $(x,y)\in\theta(Y)$.

Now suppose that for each $x\in\alpha(a)$ exists $y\in\alpha(b)$
such that $(x,y)\in\theta(Y)$. Let $P\in\varphi(\alpha(a))\cap Y$,
i.e., $\alpha(a)\cap P\neq\emptyset$ and $P\in Y$. Take $x\in\alpha(a)\cap P$.
By hypothesis, there is a $y\in\alpha(b)$ such that $(x,y)\in\theta(Y)$.
Hence $\beta(x)\cap Y=\beta(y)\cap Y$ ; and since $y\in P$, we have
that $P\in\beta(y)\cap Y$. So, $\alpha(b)\cap Y\neq\emptyset$. Thus,
$P\in\varphi(\alpha(b))\cap Y$. The other inclusion follows from
the fact that $(a,b)\in\theta(Y)$ and $\theta(Y)$ is symmetric. 
\end{proof}

\begin{corollary}\label{caract congruencia} Let $\langle L,\mathsf{C}\rangle$
be a precontact lattice, $Y$ a closed subset of $\tau_{{\rm {X}}(L)}$
and let $\theta(Y)$ be its associated lattice congruence. Then, $\theta(Y)$
is a precontact congruence iff for every $(a,b)\in\theta(Y)$, $\varphi(\alpha(a))\cap Y=\varphi(\alpha(b))\cap Y$.

\end{corollary}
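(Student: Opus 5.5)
This corollary follows by combining the definition of precontact congruence with the preceding Proposition. By definition, $\theta(Y)$ is a precontact congruence iff for every $(a,b)\in\theta(Y)$ we have $\langle\alpha(a),\alpha(b)\rangle\in\widehat{\theta(Y)}$. Unfolding $\widehat{\theta(Y)}$, this says: for each $x\in\alpha(a)$ there is $y\in\alpha(b)$ with $(x,y)\in\theta(Y)$. So the plan is to fix an arbitrary pair $(a,b)\in\theta(Y)$ and show that this condition on the pair is equivalent to
\[
\varphi(\alpha(a))\cap Y=\varphi(\alpha(b))\cap Y .
\]
Quantifying over all pairs in $\theta(Y)$ then gives the statement.

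For a fixed pair $(a,b)\in\theta(Y)$, this equivalence is exactly the preceding Proposition, applied to the closed set $Y$. Note that $\alpha(a)$ and $\alpha(b)$ are ideals by item (4) of Lemma~\ref{propp}, so $\varphi(\alpha(a))$ and $\varphi(\alpha(b))$ are the open upsets $\bigcup\{\beta(x):x\in\alpha(a)\}$ and $\bigcup\{\beta(y):y\in\alpha(b)\}$, as that Proposition requires. For the forward direction of the corollary, assume $\theta(Y)$ is a precontact congruence. For each $(a,b)\in\theta(Y)$ the $\hat{\theta}$-condition holds, so the ``if'' part of the Proposition gives the equality of the traces on $Y$. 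For the converse, assume the equality holds for all $(a,b)\in\theta(Y)$. The ``only if'' part of the Proposition (the compactness argument) then produces, for each $x\in\alpha(a)$, an element $y\in\alpha(b)$ with $(x,y)\in\theta(Y)$. This is precisely $\langle\alpha(a),\alpha(b)\rangle\in\widehat{\theta(Y)}$.

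The only point needing care is the symmetric half of the equality in the Proposition's proof. The ``if'' direction there establishes only $\varphi(\alpha(a))\cap Y\subseteq\varphi(\alpha(b))\cap Y$ from the $\hat\theta$-condition for $(a,b)$. The reverse inclusion needs the $\hat\theta$-condition for $(b,a)$. In the corollary this is available, because the hypothesis ranges over all pairs of the symmetric relation $\theta(Y)$. So applying the definition of precontact congruence to both $(a,b)$ and $(b,a)$ yields both inclusions, and I expect no further obstacle.
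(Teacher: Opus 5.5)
Your proof is correct and follows the paper's route: the corollary is the preceding Proposition quantified over all pairs of $\theta(Y)$, with the compactness argument giving the converse. Your caveat about the symmetric half is well founded and actually needed. For a single pair, the $\hat{\theta}$-condition on $(a,b)$ only yields $\varphi(\alpha(a))\cap Y\subseteq\varphi(\alpha(b))\cap Y$, and the Proposition's appeal to symmetry of $\theta(Y)$ does not give the reverse inclusion. For example, on the chain $0<c<1$ with $\mathsf{C}=\{(c,1),(1,c),(1,1)\}$ and $Y=\{[c)\}$, the pair $(1,c)$ satisfies the $\hat{\theta}$-condition, but $\varphi(\alpha(1))\cap Y=\emptyset\neq Y=\varphi(\alpha(c))\cap Y$. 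It is exactly the quantification over the symmetric relation $\theta(Y)$ in the corollary that supplies the condition for $(b,a)$.
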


We define a special class of closed subsets and show that they correspond
exactly to the congruences of precontact lattices.

\begin{definition} Let $\langle X,R\rangle$ be a precontact space.
A subset $Y\subseteq X$ is called $R$-saturated if $\max R(x)\subseteq Y$
for each $x\in Y$. \end{definition}

It is easy to verify that the intersection of any family of $R$-saturated
sets is also $R$-saturated, and the union of a finite number of them
is $R$-saturated as well. Moreover, since both $X$ and the empty
set are $R$-saturated, the collection of all $R$-saturated subsets
of $X$ forms a complete sublattice of $\mathcal{P}(X)$, which we
will denote by $C_{R}(X)$.

\begin{proposition} Let $\langle L,\mathsf{C}\rangle$ be a precontact
lattice and $\theta\subseteq L\times L$ be a lattice congruence.
The following conditions are equivalent: 
\begin{enumerate}
\item $\langle\alpha(a),\alpha(b)\rangle\in\hat{\theta}(Y)$, for each $(a,b)\in\theta(Y)$. 
\item $Y$ is $R$-saturated. 
\end{enumerate}
\end{proposition}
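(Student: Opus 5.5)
The plan is to reduce the statement to a purely topological condition using Corollary \ref{caract congruencia}, and then check that condition in both directions in the dual space. Write $R=R_{\mathsf{C}}$ and read $\theta$ as $\theta(Y)$. From the proof that $\mathcal{F}(L)$ is a Priestley precontact space we have $\varphi(\alpha(a))=\varphi(L\setminus\mathsf{C}^{-1}(a))=R^{-1}(\beta(a))^{c}$. Hence, by Corollary \ref{caract congruencia}, condition (1) is equivalent to
\[
\beta(a)\cap Y=\beta(b)\cap Y\ \Longrightarrow\ R^{-1}(\beta(a))\cap Y=R^{-1}(\beta(b))\cap Y\qquad(a,b\in L).
\]
In words: for $P\in Y$, whether $R(P)$ meets $\beta(a)$ depends only on the trace $\beta(a)\cap Y$.

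For (2)$\Rightarrow$(1), take $P\in Y$ with $R(P)\cap\beta(a)\neq\emptyset$, where $\beta(a)\cap Y=\beta(b)\cap Y$.
\begin{enumerate}
\item The set $R(P)\cap\beta(a)$ is closed and nonempty. I will invoke the standard Priestley fact that every point of a nonempty closed set lies below a maximal element of that set (Zorn's lemma plus compactness, since chains in a closed set have suprema in the closure). This gives a maximal $Q\in R(P)\cap\beta(a)$.
\item $Q$ is maximal in all of $R(P)$: if $Q<Q'$ with $Q'\in R(P)$, then $Q'\in\beta(a)$ because $\beta(a)$ is an upset, contradicting the choice of $Q$.
\item By $R$-saturation, $Q\in\max R(P)\subseteq Y$. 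So $Q\in\beta(a)\cap Y=\beta(b)\cap Y$, and therefore $P\in R^{-1}(\beta(b))$.
\item Symmetry of the hypothesis gives the reverse inclusion.
\end{enumerate}

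For (1)$\Rightarrow$(2), argue by contradiction: let $P\in Y$ and $Q\in\max R(P)$ with $Q\notin Y$. The aim is to build $a,b$ with $\beta(a)\cap Y=\beta(b)\cap Y$, $Q\in\beta(a)$ and $R(P)\cap\beta(b)=\emptyset$. Then $P\in R^{-1}(\beta(a))\cap Y$ but $P\notin R^{-1}(\beta(b))$, contradicting the displayed condition.
\begin{enumerate}
\item The set $K=Y\cap R(P)$ is closed, since $R(P)$ is closed by Lemma \ref{Caracterizacion de espacio}.
\item $Q\notin(K]$: if $Q\le y$ with $y\in K$, maximality of $Q$ in $R(P)$ forces $y=Q\in Y$, which is false. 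In a Priestley space $(K]$ is a closed downset, so there is a clopen upset $\beta(a)$ with $Q\in\beta(a)$ and $\beta(a)\cap K=\emptyset$.
\item Now $\beta(a)\cap Y$ is closed and disjoint from the closed downset $R(P)$. Each of its points can be separated from $R(P)$ by a clopen upset.
\item By compactness, finitely many of these clopen upsets suffice, and their union is some $\beta(c)$ with $\beta(a)\cap Y\subseteq\beta(c)$ and $\beta(c)\cap R(P)=\emptyset$.
\item Put $b=a\wedge c$. Then $\beta(b)\cap Y=\beta(a)\cap Y$ and $\beta(b)\cap R(P)=\emptyset$, as required.
\end{enumerate}

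The delicate step is the choice of $a$ in (1)$\Rightarrow$(2). Separating $Q$ from $R(P)$ directly is impossible, because $Q\in R(P)$. Maximality of $Q$ is exactly what allows separating $Q$ from the downset generated by $Y\cap R(P)$ instead. The other point to verify carefully is the existence of maximal elements used in (2)$\Rightarrow$(1).
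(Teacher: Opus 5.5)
Your proof is correct. Its overall architecture matches the paper's: both pass through Corollary~\ref{caract congruencia}. Your (2)$\Rightarrow$(1) is the paper's argument in topological form: push a witness in $R(P)$ up to a maximal point, which saturation places in $Y$, then transfer along $\beta(a)\cap Y=\beta(b)\cap Y$. Rewriting $\varphi(\alpha(a))$ as $R^{-1}(\beta(a))^{c}$ just makes this cleaner.

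The genuine difference is in how the offending $\theta(Y)$-pair is built in (1)$\Rightarrow$(2).
\begin{itemize}
\item The paper separates $Q$ from $Y$ itself by a basic clopen. This gives $a\in Q$ and $b\notin Q$ with $\beta(a)\cap Y\subseteq\beta(b)$. It then uses maximality algebraically: $\mathrm{Fg}(Q\cup\{b\})$ must meet the ideal $\alpha^{-1}(P)$. That yields $q\in Q$ with $q\wedge b\in\alpha^{-1}(P)$, and the pair is $(a\wedge q,\,a\wedge q\wedge b)$.
\item You use maximality first, to separate $Q$ from the closed downset $(Y\cap R(P)]$. You then cover $\beta(a)\cap Y$ by finitely many clopen upsets disjoint from $R(P)$, by compactness. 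This gives $c$ and the pair $(a,\,a\wedge c)$.
\end{itemize}

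The paper's route is more economical. It needs only the basic clopens $\beta(a)\setminus\beta(b)$ and the prime filter theorem, not the closedness of $(Y\cap R(P)]$ or a covering argument.

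Your route never leaves the dual space. It uses only that $Y$ is closed and $R(P)$ is a closed downset, so it transfers directly to an arbitrary Priestley precontact space. It also shows exactly where the maximality of $Q$ is needed.
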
 
\begin{proof}
$(1)\Rightarrow(2)$ Let $P\in Y$ and $Q\in\text{max }R(P)$. Suppose
that $Q\notin Y$. As $Y$ is closed, there exist $a,b\in L$ such
that $Y\subseteq\beta(a)^{c}\cup\beta(b)$ and $Q\notin\beta(a)^{c}\cup\beta(b)$.
This implies that $(a,a\wedge b)\in\theta(Y)$, $a\in Q$, and $b\notin Q$.
On the other hand, it is also easy to see that $\alpha^{-1}(P)\cap\mathrm{Fg}(Q\cup\{b\})\neq\emptyset$.
Consequently, there exist $p\in\alpha^{-1}(P)$ and $q\in Q$ such
that $q\wedge b\leq p$. Since $\alpha^{-1}(P)$ is an ideal and $p\in\alpha^{-1}(P)$,
with $q\wedge b\leq p$, it follows that $a\wedge q\wedge b\in\alpha^{-1}(P)$.
As a result, $P\in\varphi(\alpha(a\wedge q\wedge b))$. Moreover,
$(a\wedge q,a\wedge q\wedge b)\in\theta(Y)$. Applying Corollary \ref{caract congruencia},
we obtain $\varphi(\alpha(a\wedge q))\cap Y=\varphi(\alpha(a\wedge q\wedge b))\cap Y$.
From this, we conclude that $P\in\varphi(\alpha(a\wedge q\wedge b))\cap Y=\varphi(\alpha(a\wedge q))\cap Y$,
which implies that $a\wedge q\in\alpha^{-1}(P)$. Consequently, $a\wedge q\in\alpha^{-1}(P)\cap Q$,
contradicting the fact that $Q\in R(P)$. This completes the proof.

$(2)\Rightarrow(1)$ Suppose that $Y$ is $R$-saturated. Let $(a,b)\in\theta(Y)$.
Assume there exists $P\in\beta(\alpha(a))\cap Y$ such that $P\notin\beta(\alpha(b))\cap Y$.
Then, $\alpha(a)\cap P=\emptyset$ and $\alpha(b)\cap P\neq\emptyset$.
This implies that $a\in\alpha^{-1}(P)$ and $b\notin\alpha^{-1}(P)$.
By Lemma~\ref{separacion}, we may take $Q\in X(L)$ with $b\in Q$
and $Q\in R_{\mathsf{C}}$. As $\text{max }R(P)\neq\emptyset$, there
exists $D\in{\rm {X}}(L)$ such that $(P,D)\in R$ and $Q\subseteq D$.
Since $D\in\beta(b)\cap Y=\beta(a)\cap Y$, $a\in D$. Thus, $\alpha^{-1}(P)\cap D\neq\emptyset$,
which is a contradiction. Therefore, $\beta(a)\cap Y\subseteq\beta(b)\cap Y$.
The other inclusion can be proved in a similar way. 
\end{proof}

\begin{corollary} Let $L$ be a precontact lattice and $\theta\subseteq L\times L$
a lattice congruence. Let $Y\subseteq{\rm {X}(L)}$ be the closed
set associated with $\theta$. Then $\theta(Y)$ is a precontact congruence
iff $Y$ is $R$-saturated. \end{corollary}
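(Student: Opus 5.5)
The corollary is obtained by chaining the two preceding results with the definition of precontact congruence. Nothing new needs to be constructed.

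By Priestley duality, the lattice congruence $\theta$ equals $\theta(Y)$ for its associated closed set $Y$. By the definition of precontact congruence, $\theta(Y)$ is a precontact congruence exactly when $\langle\alpha(a),\alpha(b)\rangle\in\hat{\theta}(Y)$ for every $(a,b)\in\theta(Y)$. That is, for each $x\in\alpha(a)$ there must be some $y\in\alpha(b)$ with $(x,y)\in\theta(Y)$.

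The last proposition, applied to this closed set $Y$, states that this condition holds iff $Y$ is $R$-saturated. So the two equivalences compose directly into the claim.

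If one prefers to route through Corollary \ref{caract congruencia}, the argument reads as follows.
\begin{enumerate}
\item $\theta(Y)$ is a precontact congruence iff $\varphi(\alpha(a))\cap Y=\varphi(\alpha(b))\cap Y$ for all $(a,b)\in\theta(Y)$.
\item The proof of $(1)\Rightarrow(2)$ in the last proposition uses only this topological form, so it yields $R$-saturation.
\item Conversely, the $(2)\Rightarrow(1)$ part shows that $R$-saturation forces these equalities.
\item Corollary \ref{caract congruencia} then returns the precontact congruence condition.
\end{enumerate}

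The only point needing care is bookkeeping, namely checking that the $Y$ in the proposition is the same closed set associated with $\theta$. The proposition's statement quantifies over a generic congruence while its conditions are phrased via $Y$, so one must read $\theta=\theta(Y)$ there. Once that identification is fixed, the corollary follows immediately and there is no substantive obstacle.
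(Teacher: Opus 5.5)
Your proposal is correct and is essentially the paper's own argument: the paper states this corollary without proof because it is just the preceding proposition (read with $\theta=\theta(Y)$) combined with the definition of precontact congruence. Your remark about the alternative route is also accurate: the proposition's proof actually works with the equalities $\varphi(\alpha(a))\cap Y=\varphi(\alpha(b))\cap Y$, and Corollary~\ref{caract congruencia} is what connects these to the $\hat{\theta}$-condition.
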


\begin{corollary} Let $\langle L,\mathsf{C}\rangle$ be a precontact
lattice. The correspondence $Y\mapsto\theta(Y)$ establishes an anti-isomorphism
between $\mathbf{Con}_{p}\,L$ and $C_{R}(X)$. \end{corollary}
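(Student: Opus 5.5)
The plan is to reduce the statement to the classical Priestley correspondence for lattice congruences and then restrict it using the preceding corollary. By Priestley duality, the map $Y\mapsto\theta(Y)$, where $(a,b)\in\theta(Y)$ iff $\beta(a)\cap Y=\beta(b)\cap Y$, is an anti-isomorphism from the lattice of closed subsets of $\mathrm{X}(L)$ (ordered by inclusion) onto $\mathbf{Con}\,L$. It is order-reversing in both directions: $Y_1\subseteq Y_2$ iff $\theta(Y_2)\subseteq\theta(Y_1)$. Every congruence arises as some $\theta(Y)$ with $Y$ closed, and distinct closed sets give distinct congruences, because distinct closed sets are separated by some $\beta(a)\cap\beta(b)^{c}$.

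Next, I would invoke the preceding corollary: for a closed $Y$, $\theta(Y)$ is a precontact congruence iff $Y$ is $R$-saturated. So the restriction of $Y\mapsto\theta(Y)$ to closed $R$-saturated sets is a bijection onto $\mathbf{Con}_{p}\,L$. Injectivity is inherited from the classical correspondence. Surjectivity holds because each $\theta\in\mathbf{Con}_{p}\,L$ equals $\theta(Y)$ for some closed $Y$, and that $Y$ must then be $R$-saturated. Since the restriction of an order anti-embedding is again an order anti-embedding, the restricted map is an order anti-isomorphism. Hence it is a lattice anti-isomorphism between $\mathbf{Con}_{p}\,L$ and the poset of closed $R$-saturated sets, provided both are lattices. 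For $\mathbf{Con}_{p}\,L$ this was noted earlier in the paper.

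The main obstacle is the mismatch with $C_R(X)$ as literally defined: it is the family of all $R$-saturated subsets, not only the closed ones. I would read $C_R(X)$ as the family of closed $R$-saturated subsets, which is evidently the intended meaning given the context of closed sets $Y$. Under that reading I would check that it is closed under finite unions and arbitrary intersections. This holds because finite unions and arbitrary intersections of closed sets are closed, and the paper has already noted that $R$-saturation is preserved by these operations. Consequently this family is a lattice, with meets given by intersection and joins by finite union. The anti-isomorphism then exchanges $\cap,\cup$ with $\vee,\wedge$ of $\mathbf{Con}_{p}\,L$, sends $X$ to $i_L$ and $\emptyset$ to $L\times L$, and the statement follows.
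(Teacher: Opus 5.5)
Your proposal is correct and is essentially the paper's argument, which the paper leaves implicit: restrict the classical Priestley anti-isomorphism between closed subsets of $\mathrm{X}(L)$ and $\mathbf{Con}\,L$ by means of the preceding corollary, which says that for closed $Y$, $\theta(Y)$ is a precontact congruence iff $Y$ is $R$-saturated. Your reading of $C_R(X)$ as the family of \emph{closed} $R$-saturated sets is also necessary, because $\theta(Y)=\theta(\mathrm{cl}\,Y)$ for every $Y$, so the literal version fails injectivity (for instance, when $\mathsf{C}=\emptyset$ and $L$ is infinite, every subset of $\mathrm{X}(L)$ is $R$-saturated, and some subsets are not closed).
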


\section{Final remarks and further research directions}

In this work, we develop a duality based on relational Priestley spaces
for the class of distributive lattices equipped with a precontact
relation. This duality is used to give some results on correspondence,
characterize precontact substructures and strong precontact sublattices,
and to introduce and study a suitable notion of congruence. In the
Boolean case, the relational representation allows us to show that
the class of Boolean contact algebras has the joint embedding property
and the amalgamation property \cite{Duntsch4}. In future work, we
plan to explore these properties in the context of distributive lattices
or Heyting algebras with a precontact or contact relation.

For Boolean contact algebras, alternative representation theorems
are available in terms of topological spaces satisfying appropriate
conditions. In particular, \cite{Gold} establishes a duality between
Boolean contact algebras and pairs $\langle X,B\rangle$, where $X$
is a topological space and $B$ is a basis of regular closed sets
forming a subalgebra of $\mathsf{RO}(X)$, the Boolean algebra of
all regular closed subsets of $X$, and satisfying a condition strictly
weaker than compactness. This duality extends earlier representation
results for contact algebras obtained in \cite{Dimov2}, \cite{Duntsch2},
and \cite{Duntsch3}. A natural question is whether an analogous duality
can be established for contact lattices, along the lines of the duality
developed by Goldblatt and Grice in \cite{Gold}. In this direction,
the results presented in \cite{Duntsch1} provide a plausible starting
point.

\section*{Acknowledgements}

We would like to thank the referees for their careful reading of the
manuscript and for their valuable comments and suggestions. Their
insightful remarks have significantly improved the clarity and quality
of the paper.

\end{document}